\documentclass{amsart}
\usepackage[a4paper,marginratio=1:1]{geometry}
\usepackage{amssymb,mathrsfs,amsmath,braket}
\usepackage[initials,non-sorted-cites]{amsrefs}
\usepackage[symbol,perpage]{footmisc}
\usepackage{comment}

\numberwithin{equation}{section}
\usepackage{mathtools}\mathtoolsset{showonlyrefs} 

\usepackage{xcolor}

\newtheorem{thm}{Theorem}[section]
\newtheorem{prop}[thm]{Proposition}
\newtheorem{lem}[thm]{Lemma}

\theoremstyle{definition}
\newtheorem{dfn}[thm]{Definition}
\theoremstyle{remark}
\newtheorem{rem}[thm]{Remark}
\newtheorem{ex}[thm]{Example}

\DeclareMathOperator{\Real}{Re} 
\DeclareMathOperator{\Imaginary}{Im} 
\DeclareMathOperator{\Id}{id} 
\DeclareMathOperator{\Ker}{Ker} 
\DeclareMathOperator{\proj}{pr}

\def\bbR{\mathbb{R}} 
\def\bbC{\mathbb{C}} 
\def\bbZ{\mathbb{Z}} 
\def\bbH{\mathbb{H}} 

\newcommand{\abs}[1]{\lvert #1 \rvert} 

\title[Heisenberg Bieberbach manifolds]{Joint spectrum of the sub-Laplacian and the Reeb vector field on three-dimensional Heisenberg Bieberbach manifolds}

\author{Yoshiaki Suzuki}
\address{Department of Mathematics, Graduate School of Science, The University of Osaka, 1-1 Machikaneyama-cho, Toyonaka, Osaka, 560-0043, Japan}
\email{suzuki.yoshiaki.sci@osaka-u.ac.jp, yoshiaki.suzuki.math@gmail.com}

\author{Yuya Takeuchi}
\address{Division of Mathematics, Institute of Pure and Applied Sciences, University of Tsukuba, Tsukuba, Ibaraki 305-8571, Japan}
\email{ytakeuchi@math.tsukuba.ac.jp, yuya.takeuchi.math@gmail.com}

\subjclass[2020]{Primary 32V20; Secondary 35R03, 58C40.}

\keywords{CR manifold; pseudo-Hermitian manifold; sub-Laplacian; joint spectrum; Heisenberg group; Heisenberg Bieberbach manifold.}

\thanks{Y.S. is partly supported by JSPS KAKENHI Grant Number 24K06738. Y.T. was supported by JSPS KAKENHI Grant Number JP25K17247. 
}

\begin{document}

\begin{abstract}
    A Heisenberg Bieberbach manifold is a compact quotient of the Heisenberg group by a discrete torsion-free subgroup of its pseudo-Hermitian automorphism group.
    In this paper, we study the joint spectrum of the sub-Laplacian and the Reeb vector field on three-dimensional Heisenberg Bieberbach manifolds.
    In particular, we explicitly compute the multiplicities of the joint spectrum using theta functions and the character theory of finite groups.
\end{abstract}

\maketitle

\section{Introduction}

The spectrum of differential operators on manifolds is one of the most fundamental topics in geometric analysis.
In CR and pseudo-Hermitian geometry, one is naturally led to study operators such as the sub-Laplacian $\Delta_{b}$ and the Reeb vector field $T$, which play important roles.
If a pseudo-Hermitian manifold is Sasakian, or equivalently, if $T$ preserves the CR structure, then $\Delta_{b}$ commutes with $T$.
In particular, we can consider the joint spectrum of the self-adjoint operators $\Delta_{b}$ and $i^{- 1} T$.

The Heisenberg group $\bbH$ is the flat model of CR manifolds.
A \emph{Heisenberg Bieberbach manifold} is a compact quotient of $\bbH$ by a discrete torsion-free subgroup $\Gamma$ of the pseudo-Hermitian automorphism group of $\bbH$, which we call a \emph{Heisenberg Bieberbach group}.
Equivalently, a Heisenberg Bieberbach manifold is a closed CR manifold whose universal cover is CR diffeomorphic to $\bbH$.
The standard contact form $\theta_{\bbH}$ on $\bbH$ descends to a flat contact form on $\Gamma \backslash \bbH$.
Note that this contact form is canonical from the viewpoint of the CR Yamabe problem; any contact form on $\Gamma \backslash \bbH$ with constant Tanaka--Webster scalar curvature must be homothetic to this flat contact form~\cite{Jerison-Lee1987Yamabe}*{Theorem 7.1}.
It is therefore natural to study spectral properties on such manifolds.
In this paper, we study the joint spectrum of $\Delta_{b}$ and $i^{- 1} T$ on three-dimensional Heisenberg Bieberbach manifolds.

To this end, we first consider a lattice $\Lambda$ in $\bbH$, which acts on $\bbH$ by left translations; in this case, $\Lambda \backslash \bbH$ is called a \emph{Heisenberg nilmanifold}.
Folland~\cite{folland2004compact} studied the joint spectrum of $\Delta_{b}$ and $i^{- 1} T$ on $\Lambda \backslash \bbH$.
He first reduced the lattice to a normal form by a suitable group automorphism of $\bbH$.
Then the Weil--Brezin transform yields a family of joint eigenfunctions of $\Delta_{b}$ and $i^{- 1} T$.

Let $\Gamma$ be a Heisenberg Bieberbach group.
Then $\Gamma$ contains a lattice $\Lambda$ in $\bbH$ such that $\Lambda \backslash \bbH$ is a finite cover of $\Gamma \backslash \bbH$.
This implies that a joint eigenfunction of $\Delta_{b}$ and $i^{- 1} T$ on $\Gamma \backslash \bbH$ can be considered as a $(\Gamma / \Lambda)$-invariant joint eigenfunction on $\Lambda \backslash \bbH$.
Using this idea, the first author~\cite{Suzuki2024preprint} computed the joint spectrum for some certain Heisenberg Bieberbach manifolds $\Gamma \backslash \bbH$ satisfying $\# (\Gamma / \Lambda) = 2$ or $4$.

In this paper, we consider a general three-dimensional Heisenberg Bieberbach manifold $\Gamma \backslash \bbH$.
Our main result is a classification of $\Gamma$ up to conjugacy in the CR automorphism group of $\bbH$ together with an explicit description of the multiplicities of the joint spectrum of $\Delta_{b}$ and $i^{- 1} T$.
The difficulty in applying Folland's method in the general case lies in the fact that the group automorphism that sends $\Gamma$ to a normal form does not preserve the CR structure on $\bbH$, and hence does not preserve $\Delta_{b}$.
To overcome this difficulty, our approach is based on a classification of lattices in $\bbH$ up to conjugacy in the CR automorphism group of $\bbH$.
Then we construct joint eigenfunctions on a Heisenberg nilmanifold explicitly using theta functions.
The multiplicities of the joint spectrum are computed by applying the character theory to the representation of $\Gamma / \Lambda$ on joint eigenspaces on $\Lambda \backslash \bbH$.
In the course of the proof, we also classify Heisenberg Bieberbach groups up to conjugacy in the CR automorphism group of $\bbH$.

This paper is organized as follows.
In Section~\ref{section:CR manifolds}, we recall basic material on CR manifolds.
In Section~\ref{section:HB manifolds}, we review the definitions and some properties of Heisenberg Bieberbach groups and Heisenberg Bieberbach manifolds.
In Section~\ref{section:joint spectrum on HN}, we classify lattices $\Lambda$ in $\bbH$ up to conjugacy in the CR automorphism group of $\bbH$ and construct joint eigenfunctions on Heisenberg nilmanifolds.
In the remaining sections, we classify $\Gamma$ according to the order of $\Gamma / \Lambda$ and compute the multiplicities of the joint spectrum.

\section{CR manifolds}
\label{section:CR manifolds}

In this section, we recall the definition and fundamental concepts of CR manifolds.
We restrict our attention to the three-dimensional case.
Let $M$ be a three-dimensional manifold. A \emph{CR structure} $T^{1,0} M$ on $M$ is a complex line subbundle of $\bbC TM$ such that
\begin{equation}
    T^{1, 0} M \cap T^{0, 1} M = \{0\},
\end{equation}
where $T^{0, 1} M = \overline{T^{1, 0} M}$.
A CR manifold $(M,T^{1, 0} M)$ is said to be \emph{strictly pseudoconvex} if there exists a nowhere-vanishing real one-form $\theta$ on $M$ such that $\theta$ annihilates $T^{1, 0} M$ and 
the \emph{Levi form} $L_{\theta}$ defined by
\begin{equation}
    L_{\theta}(Z,W) \coloneqq -i\, d\theta (Z,\overline{W}),\qquad Z,W \in T^{1, 0} M
\end{equation}
is a positive definite Hermitian form on $T^{1, 0} M$. 
In this situation, such $\theta$ is called a \emph{contact form}, and the triple $(M, T^{1, 0} M, \theta)$ is called a \emph{pseudo-Hermitian manifold}.
For a contact form $\theta$, the unique vector field $T$ satisfying
\begin{equation}
    \theta(T) = 1, \quad T \lrcorner\, d\theta =0
\end{equation}
is called the \emph{Reeb vector field} associated with $\theta$.

Let $(M, T^{1, 0} M, \theta)$ be a pseudo-Hermitian manifold and set $H M \coloneqq \Ker \theta$.
We define an operator $d_{b} \colon C^{\infty}(M) \longrightarrow \Gamma((H M)^{\ast})$ by 
\begin{equation}
     d_{b} u \coloneqq (d u)|_{H M}
\end{equation}
for $u \in C^{\infty}(M)$.
The Levi form $L_{\theta}$ and the volume form $\theta \wedge d \theta$ induce an $L^{2}$-inner product on $\Gamma((H M)^{\ast})$, which determines the formal adjoint $d_{b}^{\ast}$ of $d_{b}$.
The \emph{sub-Laplacian} $\Delta_{b}$ is defined by
\begin{equation}
    \Delta_{b} u \coloneqq d_{b}^{\ast} d_{b} u
\end{equation}
for $u \in C^{\infty}(M)$.
It is known that the sub-Laplacian is subelliptic;
see \cite{Ponge2008HC}*{Section 3.5.3} for example.
In particular if $M$ is closed, then the spectrum of $\Delta_{b}$ is discrete and each eigenspace is a finite dimensional subspace of $C^{\infty}(M)$.

A pseudo-Hermitian manifold $(M, T^{1, 0} M, \theta)$ is said to be \emph{Sasakian} if the Reeb vector field $T$ preserves the CR structure $T^{1, 0} M$; that is, $[T, Z] \in \Gamma(T^{1, 0} M)$ for all $Z \in \Gamma(T^{1, 0} M)$.
Since $T$ preserves both $T^{1, 0} M$ and $\theta$, the sub-Laplacian $\Delta_{b}$ commutes with $T$.
Assume that $M$ is closed.
Then $i^{- 1} T$ induces a self-adjoint operator on each eigenspace of $\Delta_{b}$.
Therefore, the operators $\Delta_{b}$ and $i^{- 1} T$ admit a joint spectrum.

Let $(M, T^{1, 0} M)$ and $(M^{\prime}, T^{1, 0} M^{\prime})$ be CR manifolds.
A smooth map $F \colon M \longrightarrow M^{\prime}$ is called a \emph{CR map} if $F_{\ast} (T^{1, 0} M) \subset T^{1, 0} M^{\prime}$.
If a CR map $F$ is a diffeomorphism, it is called a \emph{CR diffeomorphism}.
In particular, a \emph{CR automorphism} of $M$ is a CR diffeomorphism from $M$ onto itself.
Suppose that $M$ and $M^{\prime}$ are strictly pseudoconvex, and let $\theta$ (resp.\ $\theta^{\prime}$) be a contact form on $M$ (resp.\ $M^{\prime}$).
A CR diffeomorphism $F \colon M \longrightarrow M^{\prime}$ is called a \emph{pseudo-Hermitian diffeomorphism} if it preserves the contact forms; that is, $F^{\ast} \theta^{\prime} = \theta$.
In particular, a \emph{pseudo-Hermitian automorphism} of $M$ is a pseudo-Hermitian diffeomorphism from $M$ onto itself.

\section{The Heisenberg group and Heisenberg Bieberbach manifolds}
\label{section:HB manifolds}

We review some of the standard facts on the Heisenberg group and Heisenberg Bieberbach manifolds.
The three-dimensional \emph{Heisenberg group} is the Lie group $\bbH = \bbC \times \bbR$ with
the group multiplication given by
\begin{equation}
    (z, t) \cdot (z', t') = (z + z', t + t' + 2 \Imaginary (z \bar{z}'))
\end{equation}
for $(z, t),(z', t') \in \bbH$.
We introduce a left-invariant complex vector field $Z$ by
\begin{equation}
    Z \coloneqq \frac{\partial}{\partial z} + i \bar{z} \frac{\partial}{\partial t}.
\end{equation}
The subbundle of $\bbC T \bbH$ spanned by $Z$ defines the canonical CR structure $T^{1, 0} \bbH$ on $\bbH$.
Define a left-invariant one-form $\theta_{\bbH}$ on $\bbH$ by
\begin{equation}
    \theta_{\bbH} \coloneqq d t + i (z d \bar{z} - \bar{z} d z).
\end{equation}
Then $\theta_{\bbH}$ annihilates $T^{1, 0} \bbH$ and satisfies $- i \, d \theta_{\bbH}(Z, \bar{Z}) = 2 > 0$;
in particular, $(\bbH, T^{1, 0} \bbH)$ is a strictly pseudoconvex CR manifold and $\theta_{\bbH}$ is a contact form on $\bbH$.
The Reeb vector field $T_{\bbH}$ associated with $\theta_{\bbH}$ is given by $T_{\bbH} = \partial / \partial t$.
Note that the pseudo-Hermitian manifold $(\bbH, T^{1, 0} \bbH, \theta_{\bbH})$ is Sasakian since $[T_{\bbH}, Z] = 0$.

The action $\mu$ of $\bbC^{\ast}$ on $\bbH$ is defined by
\begin{equation}
    \mu(\lambda)(z, t) \coloneqq (\lambda z, \abs{\lambda}^{2} t).
\end{equation}
This action preserves the CR structure $T^{1, 0} \bbH$ of $\bbH$ and satisfies $\mu(\lambda)^{\ast} \theta_{\bbH} = \abs{\lambda}^{2} \theta_{\bbH}$.
In particular, $\mu(\lambda)$ is a pseudo-Hermitian automorphism if and only if $\abs{\lambda} = 1$.

We consider the semidirect product $\bbH \rtimes \bbC^{\ast}$; that is, the direct product $\bbH \times \bbC^{\ast}$ with the group multiplication
\begin{equation}
    (g, \lambda) \cdot (h, \nu) \coloneqq (g \cdot \mu(\lambda)(h), \lambda \nu)
\end{equation}
for $(g,\lambda),(h,\nu) \in \bbH \times \bbC^*$.
The group $\bbH \rtimes \bbC^{\ast}$ acts on $\bbH$ from the left by
\begin{equation}
    (g, \lambda) h \coloneqq g \cdot \mu(\lambda)(h),
\end{equation}
which preserves the CR structure $T^{1, 0} \bbH$.
In what follows, we identify $(g, \lambda)$ with the automorphism $g \mu(\lambda)$ on $\bbH$.
The CR automorphism group of $(\bbH, T^{1, 0} \bbH)$ is known to be equal to $\bbH \rtimes \bbC^{\ast}$~\cite{burns-shnider1976spherical}.
Moreover, the pseudo-Hermitian automorphism group of $(\bbH, T^{1, 0} \bbH, \theta_{\bbH})$ coincides with $\bbH \rtimes S^{1}$.

Let $\Gamma$ be a discrete subgroup of $\bbH \rtimes \bbC^{\ast}$.
It is known that $\Gamma$ must be a subgroup of $\bbH \rtimes S^{1}$ if $\Gamma$ acts properly discontinuously on $\bbH$~\cite{burns-shnider1976spherical}*{Proposition 5.6}.
In view of this fact, we consider the following class of subgroups.

\begin{dfn}
A subgroup $\Gamma \subset \bbH \rtimes S^{1}$ is called a \emph{Heisenberg Bieberbach group} if $\Gamma$ is a discrete torsion-free subgroup and the quotient $\Gamma \backslash \bbH$ of $\bbH$ by $\Gamma$ is compact.
In this situation, $\Gamma \backslash \bbH$ is called a \emph{Heisenberg Bieberbach manifold}.
\end{dfn}

Note that a discrete subgroup of $\bbH \rtimes S^{1}$ acts freely on $\bbH$ if and only if it is torsion-free.
A detailed discussion in \cite{charlap1986bieberbach}*{Section 1.1}, while given in the Euclidean case, can also be applied to the Heisenberg setting.

\begin{ex}
A co-compact discrete subgroup $\Lambda \subset \bbH$ is called a \emph{lattice}, which is an example of a Heisenberg Bieberbach group. 
We call $\Lambda \backslash \bbH$ a \emph{Heisenberg nilmanifold}.
Let $L(\Lambda)$ be the image of a lattice $\Lambda$ by the first projection $\bbH \to \bbC$. 
Then $L(\Lambda)$ is a lattice in $\bbC$.
Moreover, the center $Z(\Lambda)$ of $\Lambda$ must be non-trivial and coincides with $\Lambda \cap Z(\bbH)$.
Furthermore, $N \coloneqq \# (Z(\Lambda) / [\Lambda, \Lambda])$ must be finite.
See \cite{folland2004compact}*{Section 2} for details.
\end{ex}

Let $\Gamma$ be a Heisenberg Bieberbach group and set $\Lambda \coloneqq \Gamma \cap \bbH$.
Then $\Lambda$ is a lattice in $\bbH$ and a normal subgroup of $\Gamma$. Moreover, the quotient group $\Gamma / \Lambda$ is finite~\cite{burns-shnider1976spherical}*{Proposition 5.6}.
Note that $N = \# (Z(\Lambda) / [\Lambda, \Lambda])$ is invariant under conjugation in $\bbH \rtimes \bbC^{\ast}$.
Denote by $\proj_{2} \colon \bbH \rtimes \bbC^{\ast} \to \bbC^{\ast}$ the second projection, which is a group homomorphism.

\begin{lem}
\label{lem:rotation}
    The subgroup $\proj_{2}(\Gamma)$ in $S^{1}$ is one of $\{1\}$, $\langle - 1 \rangle$, $\langle i \rangle$, $\langle e^{2 \pi i / 3} \rangle$ or $\langle e^{\pi i / 3} \rangle$.
\end{lem}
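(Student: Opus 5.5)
Since $\Lambda = \Gamma \cap \bbH = \Gamma \cap \Ker(\proj_{2})$, the homomorphism $\proj_{2}$ induces an isomorphism $\Gamma / \Lambda \cong \proj_{2}(\Gamma)$. The quotient $\Gamma / \Lambda$ is finite by \cite{burns-shnider1976spherical}*{Proposition 5.6}, so $\proj_{2}(\Gamma)$ is a finite subgroup of $S^{1}$, hence cyclic, say $\langle e^{2 \pi i / n} \rangle$ for some $n \geq 1$. It then suffices to show $n \in \{1, 2, 3, 4, 6\}$; the five listed groups are exactly these, since $\langle e^{\pi i/3}\rangle$ has order $6$. This is the crystallographic restriction, and the plan is to reduce to it through the lattice $L(\Lambda) \subset \bbC$.

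\textbf{Invariance of $L(\Lambda)$.} Take $\gamma = (g, \lambda) \in \Gamma$ with $\lambda = \proj_{2}(\gamma)$, and $h = (w, s) \in \Lambda$. Since $\Lambda$ is normal in $\Gamma$, the conjugate $\gamma h \gamma^{-1}$ lies in $\Lambda$. A short computation in $\bbH \rtimes \bbC^{\ast}$ gives
\begin{equation}
    \gamma h \gamma^{-1} = g \cdot \mu(\lambda)(h) \cdot g^{-1},
\end{equation}
which is an element of $\bbH$. Its first coordinate is $\lambda w$, because conjugation by $g$ in $\bbH$ changes only the $t$-coordinate. Hence $\lambda L(\Lambda) \subset L(\Lambda)$. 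Applying the same argument to $\lambda^{-1}$ gives equality, so multiplication by $\lambda$ is an automorphism of the rank-two lattice $L(\Lambda) \cong \bbZ^{2}$.

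\textbf{Crystallographic restriction.} In a $\bbZ$-basis of $L(\Lambda)$, multiplication by $\lambda$ is represented by a matrix in $GL_{2}(\bbZ)$. Viewed as a real-linear map of $\bbC = \bbR^{2}$, it is a rotation, so its trace $2 \Real \lambda$ is an integer. Because $\abs{\lambda} = 1$, this gives $2 \cos(2\pi k/n) \in \{0, \pm 1, \pm 2\}$ for a generator $\lambda = e^{2\pi i k/n}$ with $\gcd(k,n)=1$. These values correspond to primitive orders $1, 2, 3, 4, 6$ only, and therefore $n \in \{1, 2, 3, 4, 6\}$.

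The main point needing care is the conjugation computation: one must check that the first coordinate of $\gamma h \gamma^{-1}$ is exactly $\lambda w$. This follows from $\mu(\lambda)(w, s) = (\lambda w, \abs{\lambda}^{2} s)$ together with the fact that $\bbH$-conjugation acts trivially on the $\bbC$-component. The rest of the argument is routine.
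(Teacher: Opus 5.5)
Your proof is correct and follows the paper's argument: multiplication by $\lambda \in \proj_{2}(\Gamma)$ preserves the lattice $L(\Lambda)$, so its trace $2\Real\lambda$ is an integer, which forces $\lambda$ to be a root of unity of order $1$, $2$, $3$, $4$ or $6$. The only difference is that you first use finiteness of $\Gamma/\Lambda$ to get that $\proj_{2}(\Gamma)$ is cyclic, whereas the paper leaves this step implicit; the trace condition alone already puts $\proj_{2}(\Gamma)$ inside the $12$th roots of unity with no element of order $12$.
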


\begin{proof}
    Let $e^{i \theta} \in \proj_{2}(\Gamma)$ and take $g \in \bbH$ such that $\gamma \coloneqq (g, e^{i \theta}) \in \Gamma$.
    Since $\Lambda$ is invariant under conjugation by $\gamma$, the multiplication by $e^{i \theta}$ induces an automorphism of the lattice $L(\Lambda)$ in $\bbC$.
    In particular, its trace as an $\bbR$-linear map must be an integer.
    Hence we have $2 \cos \theta \in \bbZ$.
    This implies $\theta \equiv 0, \pi / 3, \pi / 2, 2 \pi / 3$ modulo $\pi$, which gives the classification of $\proj_{2}(\Gamma)$.
\end{proof}

Let $\Gamma$ be a Heisenberg Bieberbach group.
Since the action of $\Gamma$ on $\bbH$ preserves both the CR structure $T^{1, 0} \bbH$ and the contact form $\theta_{\bbH}$, the Heisenberg Bieberbach manifold $\Gamma \backslash \bbH$ has a strictly pseudoconvex CR structure $T^{1, 0} (\Gamma \backslash \bbH)$ and a contact form on $\Gamma \backslash \bbH$; we use the same symbol $\theta_{\bbH}$ by abuse of notation.
Let $\Gamma'$ be another Heisenberg Bieberbach group.
If there exists $\gamma \in \bbH \rtimes \bbC^{\ast}$ such that $\gamma \Gamma \gamma^{- 1} = \Gamma'$, then $\gamma$ induces a CR diffeomorphism from $\Gamma \backslash \bbH$ to $\Gamma' \backslash \bbH$.

\section{Joint spectrum on Heisenberg nilmanifolds}
\label{section:joint spectrum on HN}

For $\tau \in \bbC$ and $c \in \bbR_{> 0}$ satisfying $4 \Imaginary \tau / c \in \bbZ_{> 0}$, we set
\begin{equation}
    \Lambda_{\tau, c} \coloneqq \langle  (1, 0), (\tau, 0), (0, c) \rangle = \Set{(m + n \tau, 2 m n \Imaginary \tau + p c) | m, n, p \in \bbZ} \subset \bbH,
\end{equation}
which is a lattice in $\bbH$.
The center $Z(\Lambda_{\tau, c})$ of $\Lambda_{\tau, c}$ is given by $\Set{(0, n c) | n \in \bbZ}$.
Note that $N = 4 \Imaginary \tau / c$ since
\begin{equation}
    [\Lambda_{\tau, c}, \Lambda_{\tau, c}] = \Set{(0, 4 n \Imaginary \tau ) | n \in \bbZ}.
\end{equation}
We also note that $L(\Lambda_{\tau, c}) = \langle 1, \tau \rangle \eqqcolon L_{\tau}$.
The following proposition shows that any lattice in $\bbH$ is conjugate to some $\Lambda_{\tau, c}$ in $\bbH \rtimes \bbC^{\ast}$.

\begin{prop}
\label{prop:lattice}
    Let $\Lambda$ be a lattice in $\bbH$. Then there exist $\gamma \in \bbH \rtimes \bbC^{\ast}$ and $\tau \in \bbC$ such that $\Imaginary \tau > 0$ and $\gamma \Lambda \gamma^{- 1} = \Lambda_{\tau, c}$,
    where $c = 4 \Imaginary \tau / N$.
\end{prop}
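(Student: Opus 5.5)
Our plan is to normalize $\Lambda$ in three stages: first the projected lattice $L(\Lambda)\subset\bbC$ by the $\bbC^{\ast}$-action, then the central direction, then the lifts of the generators by an inner automorphism. We will use the conjugation formulas
\begin{equation}
    \mu(\lambda)\,(z,t)\,\mu(\lambda)^{-1} = (\lambda z, \abs{\lambda}^{2} t), \qquad
    (w,s)\,(z,t)\,(w,s)^{-1} = (z,\, t + 4\Imaginary(w\bar z)),
\end{equation}
where the second follows directly from the group law.

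\textbf{Step 1: normalizing $L(\Lambda)$.} The lattice $L(\Lambda)$ has a basis $\omega_1,\omega_2$ with $\Imaginary(\omega_2/\omega_1)>0$. Conjugating by $(0,\omega_1^{-1})\in\bbH\rtimes\bbC^{\ast}$ replaces $\Lambda$ by its image under $(z,t)\mapsto(z/\omega_1, t/\abs{\omega_1}^2)$. Afterwards $L(\Lambda)=L_\tau$ with $\tau=\omega_2/\omega_1$ and $\Imaginary\tau>0$.

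\textbf{Step 2: the center.} By the Example in Section~\ref{section:HB manifolds}, $Z(\Lambda)=\Lambda\cap Z(\bbH)$ is nontrivial and discrete, so $Z(\Lambda)=\{(0,pc)\mid p\in\bbZ\}$ for some $c>0$. Choose lifts $a=(1,t_1)$ and $b=(\tau,t_2)$ in $\Lambda$. The kernel of $\Lambda\to L(\Lambda)$ is exactly $Z(\Lambda)$, so $\Lambda$ is generated by $a$, $b$ and $(0,c)$. The commutator is
\begin{equation}
    [a,b]=(0,4\Imaginary(1\cdot\bar\tau))=(0,-4\Imaginary\tau)
\end{equation}
(up to sign convention). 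Since $\Lambda$ is two-step nilpotent, $[\Lambda,\Lambda]$ is generated by $[a,b]$, so $[\Lambda,\Lambda]=\{(0,4n\Imaginary\tau)\mid n\in\bbZ\}$. This subgroup has index $N$ in $Z(\Lambda)$, so $4\Imaginary\tau = Nc$, i.e.\ $c=4\Imaginary\tau/N$ as claimed.

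\textbf{Step 3: killing $t_1,t_2$.} Conjugating by $(w,0)$ with $w\in\bbC$ fixes $L(\Lambda)$ and the center. It shifts the $t$-coordinate of a lift of $z$ by $4\Imaginary(w\bar z)$, so $t_1\mapsto t_1+4\Imaginary w$ and $t_2\mapsto t_2+4\Imaginary(w\bar\tau)$. The map $w\mapsto(\Imaginary w,\Imaginary(w\bar\tau))$ is $\bbR$-linear with determinant $\pm\Imaginary\tau\neq0$, hence surjective. We can therefore choose $w$ with both new $t$-values equal to $0$. The conjugated lattice is then $\langle(1,0),(\tau,0),(0,c)\rangle=\Lambda_{\tau,c}$, and $4\Imaginary\tau/c=N\in\bbZ_{>0}$, as required by the definition.

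\textbf{Main obstacle.} The only delicate point is Step 3: we must check that an arbitrary pair of lifts can be straightened simultaneously. This reduces to the invertibility of the real linear map above, i.e.\ to the nondegeneracy of $\Imaginary(w\bar z)$ on $L_\tau\otimes\bbR$. Everything else is bookkeeping with the group law and with the conjugation-invariance of $N$.
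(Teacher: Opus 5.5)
Your proposal is correct and follows the paper's proof step for step: the same conjugation by $(0,\omega_1^{-1})$, and a choice of $w$ that is exactly the paper's $a+bi$ with $a=(s_2-s_1\Real\tau)/(4\Imaginary\tau)$ and $b=-s_1/4$. The only cosmetic difference is that you compute $[\Lambda,\Lambda]$ directly via bilinearity of commutators to get $4\Imaginary\tau=Nc$. The paper instead only notes $(0,4\Imaginary\tau)\in\Lambda_{\tau,c}\cap Z(\bbH)$ and relies on its earlier formula $N=4\Imaginary\tau/c$ for $\Lambda_{\tau,c}$ together with conjugation invariance of $N$.
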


\begin{proof}
    Since $L(\Lambda)$ is a lattice in $\bbC$,
    we can take a basis $\omega_{1}, \omega_{2}$ of $L(\Lambda)$ satisfying $\Imaginary (\overline{\omega}_{1} \omega_{2}) > 0$.
    Set $\gamma_{1} \coloneqq (0, \omega_{1}^{-1}) \in \bbH \rtimes \bbC^{\ast}$.
    Then $\Lambda_{1} \coloneqq \gamma_{1} \Lambda \gamma_{1}^{-1}$ is a lattice in $\bbH$ such that $L(\Lambda_{1}) = L_{\tau}$ for some $\tau \in \bbC$ with $\Imaginary \tau > 0$.
    The center of $\Lambda_{1}$ is generated by $(0, c)$ for some $c > 0$.
    Moreover, $\Lambda_{1}$ is generated by three elements $(1, s_{1}), (\tau, s_{2}), (0, c)$ for some $s_{1}, s_{2} \in \bbR$.
    Set
    \begin{equation}
        a \coloneqq \frac{s_{2} - s_{1} \Real \tau}{4 \Imaginary \tau}, \qquad b \coloneqq - \frac{s_{1}}{4}.
    \end{equation}
    Then $g_{2} \coloneqq (a + b i, 0) \in \bbH$ and one can see that
    \begin{equation}
        \Lambda_{2} \coloneqq g_{2} \Lambda_{1} g_{2}^{-1} = \langle  (1, 0), (\tau, 0), (0, c) \rangle.
    \end{equation}
    Moreover, we have
    \begin{equation}
        (\tau, 0) (1, 0) (\tau, 0)^{-1} (1, 0)^{-1} = (0, 4 \Imaginary \tau) \in \Lambda_2 \cap Z(\bbH),
    \end{equation}
    which implies $4 \Imaginary \tau / c \in \bbZ_{> 0}$.
    Therefore, $\gamma \coloneqq g_{2} \gamma_{1} \in \bbH \rtimes \bbC^{\ast}$ satisfies $\gamma \Lambda \gamma^{-1} = \Lambda_{\tau, c}$.
\end{proof}

We also give a classification of elements in $\bbH$ that normalizes $\Lambda_{\tau, c}$ for later use,

\begin{prop}
\label{prop:conjugate}
    An element $g = (\zeta, \sigma) \in \bbH$ satisfies $g \Lambda_{\tau, c} g^{-1} = \Lambda_{\tau, c}$ if and only if $\zeta = (k + l \tau) / N$ for some $k, l \in \bbZ$.
\end{prop}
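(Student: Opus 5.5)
Conjugating $h = (w, s) \in \bbH$ by $g = (\zeta, \sigma)$ gives
\begin{equation}
    g h g^{-1} = (w, s + 4 \Imaginary(\zeta \bar{w})),
\end{equation}
since $\sigma$ is central and only the commutator term survives. This follows from the group law: $(\zeta,\sigma)(w,s)(-\zeta,-\sigma) = (w, s + 2\Imaginary(\zeta\bar w) - 2\Imaginary((\zeta+w)\bar\zeta)) = (w, s + 4\Imaginary(\zeta\bar w))$. Conjugation therefore fixes the $\bbC$-component and shifts the $t$-component by the real-linear functional $\phi_{\zeta}(w) \coloneqq 4 \Imaginary(\zeta \bar{w})$ of $w$. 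Because it is a bijective group automorphism of $\bbH$ with inverse given by conjugation by $g^{-1} = (-\zeta,-\sigma)$, the condition $g \Lambda_{\tau,c} g^{-1} = \Lambda_{\tau,c}$ is equivalent to the inclusion $g \Lambda_{\tau,c} g^{-1} \subset \Lambda_{\tau,c}$. Indeed, the inclusion for $-\zeta$ follows from that for $\zeta$, since the condition below is symmetric under $\zeta \mapsto -\zeta$. It is also enough to check the three generators $(1,0)$, $(\tau,0)$, $(0,c)$, because conjugation is a homomorphism.

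The central generator $(0,c)$ is fixed. For $(1,0)$, the image $(1, \phi_\zeta(1))$ lies in $\Lambda_{\tau,c}$ iff it has the form $(m + n\tau, 2mn\Imaginary\tau + pc)$ with $m=1$, $n=0$. This forces $\phi_\zeta(1) \in c\bbZ$. Likewise $(\tau, \phi_\zeta(\tau)) \in \Lambda_{\tau,c}$ iff $\phi_\zeta(\tau) \in c\bbZ$. So the condition is
\begin{equation}
    4\Imaginary \zeta \in c\bbZ, \qquad 4\Imaginary(\zeta\bar\tau) \in c\bbZ.
\end{equation}

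Write $\zeta = x + y\tau$ with $x, y \in \bbR$, which is possible since $\Imaginary\tau>0$. Then $\Imaginary\zeta = y\Imaginary\tau$ and $\Imaginary(\zeta\bar\tau) = x\Imaginary(\bar\tau) = -x\Imaginary\tau$. Using $c = 4\Imaginary\tau/N$, the conditions become $Ny \in \bbZ$ and $Nx \in \bbZ$. Setting $l = Ny$ and $k = Nx$ gives $\zeta = (k + l\tau)/N$, and the computation reverses for the converse.

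There is no real obstacle. The only care needed is the sign and the factor in the commutator formula, and noticing that one inclusion suffices for equality.
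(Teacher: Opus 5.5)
Your proof is correct and takes essentially the same route as the paper. You conjugate the three generators, reduce $g\Lambda_{\tau,c}g^{-1}\subset\Lambda_{\tau,c}$ to $4\Imaginary\zeta,\,4\Imaginary(\zeta\bar\tau)\in c\bbZ$, solve for $\zeta=(k+l\tau)/N$, and get the reverse inclusion by applying the same condition to $g^{-1}$. That reverse inclusion comes from your symmetry observation under $\zeta\mapsto-\zeta$, not from bijectivity of conjugation alone, which by itself would not rule out $g\Lambda_{\tau,c}g^{-1}$ being a proper sublattice.
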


\begin{proof}
    A computation yields
    \begin{equation}
        g (1, 0) g^{-1} = (1, 0) (0, 4 \Imaginary \zeta), \quad g (\tau, 0) g^{-1}  = (\tau, 0) (0, 4 \Imaginary (\zeta \bar{\tau})), \quad g (0, c) g^{-1} = (0, c).
    \end{equation}
    Hence $g \Lambda_{\tau, c} g^{-1} \subset \Lambda_{\tau, c}$ if and only if $4 \Imaginary \zeta, 4 \Imaginary (\zeta \bar{\tau}) \in c \bbZ$,
    and so there exist $k, l \in \bbZ$ such that
    \begin{equation}
        \zeta = \frac{c}{4 \Imaginary \tau} (k + l \tau) = \frac{k + l \tau}{N}.
    \end{equation}
    Assume that $\zeta$ is of this form.
    Then a similar argument shows that $g^{-1} \Lambda_{\tau, c} g \subset \Lambda_{\tau, c}$, which means $\Lambda_{\tau, c} \subset g \Lambda_{\tau, c} g^{-1}$.
\end{proof}

Let $c, r \in \bbR_{> 0}$ and $\tau \in \bbC$ be such that $4 \Imaginary \tau / c \in \bbZ_{> 0}$. 
We consider the pseudo-Hermitian manifold $(M_{\tau, c} \coloneqq \Lambda_{\tau, c} \backslash \bbH, T^{1, 0} M_{\tau, c}, r \theta_{\bbH})$.
Recall that $N = 4 \Imaginary \tau / c$.
To simplify notation, we omit the subscript ${}_{\tau, c}$ in this section.
The sub-Laplacian $\Delta_{b}$ and the Reeb vector field $T$ is given by
\begin{equation}
    \Delta_{b} = - \frac{1}{2 r} (Z \bar{Z} + \bar{Z} Z), \qquad T = \frac{1}{r} T_{\bbH}.
\end{equation}
The Heisenberg group $\bbH$ has the standard volume form $\theta_{\bbH} \wedge d \theta_{\bbH}$, which is bi-invariant and thus induces a measure on $M$.
We can identify the space $L^{2}(M)$ of square integrable functions on $M$ with the space of $\Lambda$-invariant functions $f$ on $\bbH$ with $\int_{Q} \abs{f}^{2} \, \theta_{\bbH} \wedge d \theta_{\bbH} < \infty$, where $Q$ is a fundamental domain for $\Lambda$.
The right action of $\bbH$ on $M$ induces the strongly continuous unitary action $R$ of $\bbH$ on $L^{2}(M)$ defined by
\begin{equation}
    (R(h) f)([g]) \coloneqq f([g h]), \qquad g, h \in \bbH.
\end{equation}
Note that
\begin{equation}
    d R(Z) = Z, \qquad d R(\bar{Z}) = \bar{Z}
\end{equation}
by the definition of $R$.
We would like to decompose $R$ into irreducible unitary representations of $\bbH$.

To this end,
we first expand $f(z, t) \in L^{2}(M)$ into the Fourier series in $t$.
It follows from the $\Lambda$-invariance of $f$ that $f(z, t + c) = f(z, t)$, and hence $f$ can be expanded as follows:
\begin{equation}
    f(z, t) = \sum_{n \in \bbZ} e^{2 \pi i n t / c} g_{n}(z).
\end{equation}
This implies that
\begin{equation}
    L^{2}(M) = \bigoplus_{n \in \bbZ} \mathscr{H}_{n},
\end{equation}
where $\mathscr{H}_{n}$ is the subspace of $L^{2}(M)$ consisting of functions of the form $f(z, t) = e^{2 \pi i n t / c} g(z)$.
Note that $i^{- 1} T f = (2 \pi n / c r) f$ for any $f \in \mathscr{H}_{n}$.

A function in $\mathscr{H}_{0}$ can be regarded as a function on the torus $\bbC / L(\Lambda)$.
Denote by $L(\Lambda)^{\prime}$ the dual lattice of $L(\Lambda)$; that is,
\begin{equation}
    L(\Lambda)^{\prime} = \Set{\zeta \in \bbC | \Real(\zeta \bar{z}) \in \bbZ \text{ for all } z \in L(\Lambda)}.
\end{equation}
For each $\zeta \in L(\Lambda)^{\prime}$,
the function $\chi_{\zeta}(z) \coloneqq e^{2 \pi i \Real(\zeta \bar{z})}$ is an eigenfunction of $\Delta_{b}$ with eigenvalue $\pi^{2} \abs{\zeta}^{2} / r$,
and $\mathscr{H}_{0}$ has the orthogonal decomposition
\begin{equation}
    \mathscr{H}_{0} = \bigoplus_{\zeta \in L(\Lambda)^{\prime}} \bbC \chi_{\zeta}.
\end{equation}
Set $E_{\kappa} \coloneqq \Set{\zeta \in L(\Lambda)^{\prime} | \pi^{2} \abs{\zeta}^{2} / r = \kappa }$ for each $\kappa \geq 0$.
The multiplicity of the eigenvalue $(\kappa, 0)$ in the joint spectrum of $\Delta_{b}$ and $i^{- 1} T$ is given by $\# E_{\kappa}$.

In the case $n \neq 0$, the Stone--von Neumann theorem yields that $R|_{\mathscr{H}_{n}}$ is unitary equivalent to a sum of copies of the Schr\"{o}dinger representation $\pi_{n / c}$ of $\bbH$ in $L^{2}(\bbR)$~\cite{Folland1989Harmonic}*{Theorem 1.50}.
Here, the \emph{Schr\"{o}dinger representation} $\pi_{\beta}$ for $\beta \in \bbR \setminus \{0\}$ is defined by
\begin{equation}
    (\pi_{\beta}(x + i y, t) u)(\xi) = e^{2 \pi i \beta (t - 2 x y - 2 y \xi)} u(\xi + 2 x).
\end{equation}
One can see that
\begin{equation}
    d \pi_{\beta}(Z) = \frac{d}{d \xi} - 2 \pi \beta \xi, \quad d \pi_{\beta}(\bar{Z}) = \frac{d}{d \xi} + 2 \pi \beta \xi, \quad d \pi_{\beta}(\Delta_{b}) = - \frac{1}{r} \frac{d^{2}}{d \xi^{2}} + \frac{4 \pi^{2} \beta^{2}}{r} \xi^{2}. 
\end{equation}
In particular, $d \pi_{\beta}(\Delta_{b})$ is the Hamiltonian of a quantum harmonic oscillator.
Assume that $\beta > 0$.
Then $d \pi_{\beta}(Z)$ (resp.\ $d \pi_{\beta}(\bar{Z})$) corresponds to the creation (resp.\ annihilation) operator.
The kernel of $d \pi_{\beta}(\bar{Z})$ is given by
\begin{equation}
    \mathcal{H}_{0}^{\beta} \coloneqq \Ker d \pi_{\beta}(\bar{Z}) = \bbC e^{- \pi \beta \xi^{2}};
\end{equation}
in particular, this space is one-dimensional.
Moreover, $\mathcal{H}_{k}^{\beta} \coloneqq d \pi_{\beta}(Z^{k}) (\mathcal{H}_{0}^{\beta}) \subset L^{2}(\bbR)$ is also one-dimensional for any $k \in \bbZ_{\geq 0}$, and
\begin{equation}
    L^{2}(\bbR) = \bigoplus_{k \in \bbZ_{\geq 0}} \mathcal{H}_{k}^{\beta}.
\end{equation}
Furthermore, $d \pi_{\beta}(\Delta_{b})$ on $\mathcal{H}_{k}^{\beta}$ is equal to the multiplication by $(4 k + 2) \pi \beta / r$.

Assume that $n > 0$.
Since $R|_{\mathscr{H}_{n}}$ is unitary equivalent to a sum of copies of the Schr\"{o}dinger representation $\pi_{n / c}$, the map $Z^{k} \colon \Ker \bar{Z} \cap \mathscr{H}_{n} \to \mathscr{H}_{n}$ is injective, and
\begin{equation}
     \mathscr{H}_{n} = \bigoplus_{k \in \bbZ_{\geq 0}} \mathscr{H}_{n, k},
\end{equation}
where $\mathscr{H}_{n, k} \coloneqq Z^{k} (\Ker \bar{Z} \cap \mathscr{H}_{n})$.
Moreover, we have
\begin{equation}
    \Delta_{b}|_{\mathscr{H}_{n, k}} = \frac{(4 k + 2) \pi n}{c r} \Id_{\mathscr{H}_{n, k}}.
\end{equation}
We now construct a basis of $\mathscr{H}_{n, k}$ for use in later sections.
To this end, it suffices to construct a basis of $\mathscr{H}_{n, 0}$.
Any $f \in \mathscr{H}_{n, 0}$ is of the form
\begin{equation}
    f(z, t) = \exp \left( \frac{2 \pi n}{c} (i t - \abs{z}^{2} + z^{2}) \right) h(z).
\end{equation}
It follows from the equation $\bar{Z} f = 0$ that
\begin{equation}
    0 = \bar{Z} f (z, t) = \exp \left( \frac{2 \pi n}{c} (i t - \abs{z}^{2} + z^{2}) \right) \frac{\partial h}{\partial \bar{z}}(z),
\end{equation}
which implies that $h$ is a holomorphic function on $\bbC$.
Moreover, since $f$ is $\Lambda$-invariant, we have
\begin{align}
    f(z, t) = f((1, 0) \cdot (z, t)) &= f(z + 1, t - 2 \Imaginary z) \\
    &= \exp \left( \frac{2 \pi n}{c} (i t - \abs{z}^{2} + z^{2}) \right) h(z + 1)
\end{align}
and
\begin{align}
    f(z, t) = f((\tau, 0) \cdot (z, t)) &= f(z + \tau, t - 2 \Imaginary (\bar{\tau} z)) \\
    &= \exp \left( \frac{2 \pi n}{c} (i t - \abs{z}^{2} + z^{2}) \right) \exp \left( 2 \pi i n N z + \pi i n N \tau \right) h(z + \tau);
\end{align}
here we use the fact that $N = 4 \Imaginary \tau / c$.
Hence $h$ satisfies
\begin{equation}
    h(z + 1) = h(z), \qquad h(z + \tau) = \exp (- 2 \pi i n N z - \pi i n N \tau) h(z);
\end{equation}
in particular, $h$ is a $\bbZ$-periodic holomorphic function on $\bbC$.
Thus we have the Fourier expansion
\begin{equation}
    h(z) = \sum_{j = 0}^{n N - 1} \sum_{m \in \bbZ} a_{j, m} \exp \left( 2 \pi i n N \left( m + \frac{j}{n N} \right) z \right).
\end{equation}
Using this expression, we can see that
\begin{equation}
    h(z + \tau) = \sum_{j = 0}^{n N - 1} \sum_{m \in \bbZ} a_{j, m} \exp \left( 2 \pi i n N \left( m + \frac{j}{n N} \right) \tau \right) \exp \left( 2 \pi i n N \left( m + \frac{j}{n N} \right) z \right)
\end{equation}
and
\begin{align}
    \exp (- 2 \pi i n N z - \pi i n N \tau) h(z) &= \sum_{j = 0}^{n N - 1} \sum_{m \in \bbZ} a_{j, m} \exp (- \pi i n N \tau) \exp \left( 2 \pi i n N \left( m - 1 + \frac{j}{n N} \right) z \right) \\
    &= \sum_{j = 0}^{n N - 1} \sum_{m \in \bbZ} a_{j, m + 1} \exp (- \pi i n N \tau) \exp \left( 2 \pi i n N \left( m + \frac{j}{n N} \right) z \right).
\end{align}
Hence $a_{j, m}$ satisfies the recurrence formula
\begin{equation}
    a_{j, m + 1} = \exp \left( 2 \pi i n N \tau \left( m + \frac{1}{2} + \frac{j}{n N} \right) \right) a_{j, m}.
\end{equation}
Solving this recurrence relation, we obtain the following formula:
\begin{equation}
    a_{j, m} = C_{j} \exp \left( \pi i n N \tau \left( m  + \frac{j}{n N} \right)^{2} \right)
\end{equation}
for some $C_{j} \in \bbC$.
This means that
\begin{equation}
    h(z) = \sum_{j = 0}^{n N - 1} C_{j} \sum_{m \in \bbZ} \exp \left( \pi i n N \tau \left( m + \frac{j}{n N} \right)^{2} + 2 \pi i n N \left( m + \frac{j}{n N} \right) z \right).
\end{equation}
For each $0 \leq j \leq n N - 1$, set
\begin{equation}
    h_{j}(z) \coloneqq \sum_{m \in \bbZ} \exp \left( \pi i n N \tau \left( m + \frac{j}{n N} \right)^{2} + 2 \pi i n N \left( m + \frac{j}{n N} \right) z \right)
\end{equation}
and
\begin{equation}
    f_{j}(z, t) \coloneqq \exp \left( \frac{2 \pi n}{c} (i t - \abs{z}^{2} + z^{2}) \right) h_{j}(z) \in \mathscr{H}_{n, 0}.
\end{equation}
Then $(Z^{k} f_{0}, \dots , Z^{k} f_{n N - 1})$ is a basis of $\mathscr{H}_{n, k}$ for any $k \in \bbZ_{\geq 0}$.

In the case $n < 0$,
we have the orthogonal decomposition
\begin{equation}
    \mathscr{H}_{n} = \overline{\mathscr{H}_{- n}} = \bigoplus_{k \in \bbZ_{\geq 0}} \overline{\mathscr{H}_{- n, k}}
\end{equation}
and
\begin{equation}
    \Delta_{b}|_{\overline{\mathscr{H}_{- n, k}}} = \frac{(4 k + 2) \pi \abs{n}}{c r} \Id_{\overline{\mathscr{H}_{- n, k}}}.
\end{equation}
Moreover, $(\bar{Z}^{k} \bar{f}_{0}, \dots , \bar{Z}^{k} \bar{f}_{\abs{n} N - 1})$ is a basis of $\overline{\mathscr{H}_{- n, k}}$ for any $k \in \bbZ_{\geq 0}$.

In summary, we obtain an alternative proof of \cite{folland2004compact}*{Theorem 3.2} that avoids the use of the Weil--Brezin transform.

\begin{thm}
    The joint spectrum of $\Delta_{b}$ and $i^{- 1} T$ on $M$ is
    \begin{equation}
        \Set{\left( \frac{(4 k + 2) \pi \abs{n}}{cr}, \frac{2 \pi n}{c r} \right)|n \in \bbZ \setminus \{0\}, k \in \bbZ_{\geq 0}} \cup \Set{\left( \frac{\pi^{2} \abs{\zeta}^{2}}{r}, 0 \right) | \zeta \in L_{\tau}^{\prime}}.
    \end{equation}
    The multiplicity of $((4 k + 2) \pi \abs{n} / cr, 2 \pi n / cr)$ in the joint spectrum is given by $\abs{n} N$.
    The multiplicity of $(\kappa, 0)$ in the joint spectrum is equal to $\# E_{\kappa}$ for each $\kappa \geq 0$.
\end{thm}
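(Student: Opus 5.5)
The plan is to assemble the theorem from the decomposition $L^{2}(M) = \bigoplus_{n \in \bbZ} \mathscr{H}_{n}$ constructed above. Each $\mathscr{H}_{n}$ is invariant under both $\Delta_{b}$ and $i^{-1}T$, since both are built from left-invariant fields commuting with $T_{\bbH}$. On $\mathscr{H}_{n}$ the operator $i^{-1}T$ acts by the scalar $2\pi n/cr$. Hence the joint spectrum is the union over $n$ of $\{(\kappa, 2\pi n/cr)\}$, where $\kappa$ runs through the spectrum of $\Delta_{b}|_{\mathscr{H}_{n}}$, and the multiplicity of a joint eigenvalue equals the dimension of the corresponding $\Delta_{b}$-eigenspace inside the single $\mathscr{H}_{n}$ picked out by the second coordinate.

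For $n = 0$ I would use the orthogonal decomposition $\mathscr{H}_{0} = \bigoplus_{\zeta \in L_{\tau}'} \bbC \chi_{\zeta}$, which is ordinary Fourier analysis on the torus $\bbC/L_{\tau}$. We already have $\Delta_{b}\chi_{\zeta} = (\pi^{2}\abs{\zeta}^{2}/r)\chi_{\zeta}$. So the $\kappa$-eigenspace in $\mathscr{H}_{0}$ is spanned by $\{\chi_{\zeta} : \zeta \in E_{\kappa}\}$, and its dimension is $\# E_{\kappa}$.

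For $n > 0$ I would use $\mathscr{H}_{n} = \bigoplus_{k} \mathscr{H}_{n,k}$, on which $\Delta_{b}$ acts by the distinct scalars $(4k+2)\pi n/cr$. The multiplicity of the $k$-th value is therefore $\dim \mathscr{H}_{n,k}$. Since $Z^{k}$ is injective on $\Ker\bar Z \cap \mathscr{H}_{n}$, this equals $\dim \mathscr{H}_{n,0}$. The main step is to confirm that $f_{0},\dots,f_{nN-1}$ form a basis of $\mathscr{H}_{n,0}$.
\begin{itemize}
\item \emph{Spanning.} The recurrence computation shows that every element of $\mathscr{H}_{n,0}$ is a combination of the $f_{j}$.
\item \emph{Membership.} Conversely, each $h_{j}$ converges because $\Imaginary\tau > 0$, and it satisfies the two quasi-periodicity relations. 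The relations for $(1,0)$ and $(\tau,0)$ give invariance under the generators, and invariance under $(0,c)$ is automatic. Each $f_{j}$ is annihilated by $\bar Z$ and is square integrable on a fundamental domain, so $f_{j} \in \mathscr{H}_{n,0}$.
\item \emph{Independence.} The Fourier supports of the $h_{j}$ lie in the disjoint cosets $j/(nN) + \bbZ$, so the $f_{j}$ are linearly independent.
\end{itemize}
This gives multiplicity $nN = \abs{n}N$.

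The case $n < 0$ follows by complex conjugation. The map $f \mapsto \bar f$ sends $\mathscr{H}_{-n,k}$ onto $\overline{\mathscr{H}_{-n,k}} \subset \mathscr{H}_{n}$ and commutes with $\Delta_{b}$, since $\Delta_{b}$ has real coefficients. Thus the eigenvalues are $(4k+2)\pi\abs{n}/cr$, again with multiplicity $\abs{n}N$.

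Finally, completeness of the spectrum needs no separate argument. Each $\mathscr{H}_{n}$ is the closed orthogonal sum of the pieces listed above, so these eigenfunctions span $L^{2}(M)$ and no other joint eigenvalues occur.
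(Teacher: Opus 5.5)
Your proposal is correct and follows essentially the same route as the paper: Fourier decomposition in $t$, Fourier analysis on the torus for $n=0$, the Stone--von Neumann decomposition $\mathscr{H}_{n}=\bigoplus_{k}\mathscr{H}_{n,k}$ with the theta-function basis $f_{0},\dots,f_{nN-1}$ of $\mathscr{H}_{n,0}$ for $n>0$, and complex conjugation for $n<0$. Your explicit checks that each $f_{j}$ lies in $\mathscr{H}_{n,0}$ and that the $f_{j}$ are linearly independent (their Fourier supports are the disjoint sets $j+nN\bbZ$) fill in details the paper only asserts.
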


Let $\Gamma$ be a Heisenberg Bieberbach group.
By conjugating in $\bbH \rtimes \bbC^{\ast}$, we may assume that $\Gamma \cap \bbH = \Lambda$.
Then the finite group $\Gamma / \Lambda$ acts on $M$ by pseudo-Hermitian automorphisms, and its quotient is isomorphic to the Heisenberg Bieberbach manifold $\Gamma \backslash \bbH$ as a pseudo-Hermitian manifold.
In particular, the joint spectrum of $\Delta_{b}$ and $i^{- 1} T$ on $\Gamma \backslash \bbH$ is contained in
\begin{equation}
    \Set{\left( \frac{(4 k + 2) \pi \abs{n}}{cr}, \frac{2 \pi n}{c r} \right) | n \in \bbZ \setminus \{0\}, k \in \bbZ_{\geq 0}} \cup \Set{\left( \frac{\pi^{2} \abs{\zeta}^{2}}{r}, 0 \right) | \zeta \in L_{\tau}^{\prime}}.
\end{equation}
Note that the joint eigenspace corresponding to the eigenvalue $(0, 0)$ is the space of constant functions, and so its multiplicity must be $1$.
To simplify notation, we set
\begin{equation}
    \sigma_{n, k} \coloneqq \left( \frac{(4 k + 2) \pi \abs{n}}{cr}, \frac{2 \pi n}{c r} \right)
\end{equation}
for $n \in \bbZ \setminus \{0\}$ and $k \in \bbZ_{\geq 0}$.

\section{Joint spectrum on Heisenberg Bieberbach manifolds $\Gamma \backslash \bbH$ with $\proj_{2}(\Gamma) = \langle - 1 \rangle$}
\label{section:joint spectrum on HB of order 2}

Let $\Gamma$ be a Heisenberg Bieberbach group with $\proj_{2}(\Gamma) = \langle - 1 \rangle$ and consider the Heisenberg Bieberbach manifold $\Gamma \backslash \bbH$.
Recall that $N = \# (Z(\Lambda) / [\Lambda, \Lambda])$, where $\Lambda = \Gamma \cap \bbH$.
We first show that $\Gamma$ is conjugate to the standard one in $\bbH \rtimes \bbC^{\ast}$.

\begin{prop}
\label{prop:HB group of order 2}
    Let $\Gamma$ be a Heisenberg Bieberbach group satisfying $\proj_{2}(\Gamma) = \langle - 1 \rangle$.
    Then $N$ must be even and there exists $\tau \in \bbC$ such that $\Imaginary \tau > 0$ and $\Gamma$ is conjugate in $\bbH \rtimes \bbC^{\ast}$ to
    \begin{equation}
        \Gamma_{\tau, c} \coloneqq \langle \Lambda_{\tau, c}, \varphi \coloneqq (0, c / 2) \mu(- 1) \rangle,
    \end{equation}
    where $c = 4 \Imaginary \tau / N$.
\end{prop}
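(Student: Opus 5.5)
First I apply Proposition~\ref{prop:lattice} to $\Lambda = \Gamma \cap \bbH$. This gives $\gamma \in \bbH \rtimes \bbC^{\ast}$ with $\gamma \Lambda \gamma^{-1} = \Lambda_{\tau, c}$, where $\Imaginary \tau > 0$ and $c = 4 \Imaginary \tau / N$. Conjugating by an element of the form $(0,\lambda)$ or $(g,1)$ leaves $\proj_{2}$ unchanged, because $\proj_{2}$ is a homomorphism to the abelian group $\bbC^{\ast}$. So I may assume $\Lambda = \Lambda_{\tau, c}$. Since $\proj_{2}(\Gamma) = \langle -1 \rangle$ and $\ker(\proj_{2}|_{\Gamma}) = \Lambda$, we get $\Gamma = \langle \Lambda, \psi \rangle$ with $\psi = (\zeta, s)\mu(-1)$ for some $(\zeta, s) \in \bbH$. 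It then suffices to conjugate $\psi$, modulo $\Lambda$, into $\varphi$ by elements of $\bbH$ that normalize $\Lambda_{\tau,c}$, and to show along the way that $N$ is even.

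For the $z$-component I conjugate by $h = (w, 0)$. Since $\mu(-1)(w,0) = (-w,0)$, we have $h \psi h^{-1} = (w,0)(\zeta,s)(w,0)\mu(-1)$, whose $\bbC$-component is $\zeta + 2w$. The constraint is that $h$ must normalize $\Lambda_{\tau,c}$, so by Proposition~\ref{prop:conjugate} $w \in \frac{1}{N} L_{\tau}$. Before that, I pin down $\zeta$. Conjugation by $\psi$ preserves $\Lambda$. A direct computation gives $\psi (m + n\tau, 2mn\Imaginary\tau)\psi^{-1} = (-(m+n\tau), 2mn\Imaginary\tau + 4\Imaginary(\zeta\overline{(m+n\tau)}) \cdot(\text{sign}))$, up to the exact sign. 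Requiring this to lie in $\Lambda_{\tau,c}$ forces $4\Imaginary\zeta,\ 4\Imaginary(\zeta\bar\tau) \in c\bbZ$, that is, $\zeta \in \frac1N L_\tau$, exactly as in the proof of Proposition~\ref{prop:conjugate}. Also $\psi^{2} = (\zeta,s)(-\zeta,s) = (0, 2s) \in \Lambda$, so $2s \in c\bbZ$. Multiplying $\psi$ by elements of $\Lambda$ shifts $\zeta$ by $L_\tau$, and conjugation by $h$ shifts it by $2w \in \frac{2}{N}L_\tau$. So $\zeta$ can be reduced to $0$ when $N$ is odd, and to a representative of $\frac1N L_\tau / \frac2N L_\tau$ when $N$ is even.

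Now torsion-freeness enters. If $\zeta$ is made $0$, then $\psi = (0,s)\mu(-1)$ with $s \in \frac c2\bbZ$. Multiplying by central elements $(0,pc)$ reduces $s$ to $0$ or $c/2$. If $s = 0$, then $\psi = \mu(-1)$ is torsion, which is impossible. If $s = c/2$, then $\psi = \varphi$. For the case where $\zeta$ cannot be killed, I check that $\psi$ has a fixed point. The element $(\zeta,s)\mu(-1)$ fixes $(z,t)$ exactly when $(\zeta - z, s + t - 2\Imaginary(\zeta\bar z)) = (z,t)$. This holds with $z = \zeta/2$ and $t$ free, since $\Imaginary(\zeta\bar\zeta/2) = 0$ and $s = 0$ is needed; more precisely, I find a fixed point of $\lambda\psi$ for a suitable $\lambda \in \Lambda$. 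So the honest analysis is: for every coset representative, some element $\lambda\psi$ with $\lambda \in \Lambda$ is an involution unless the $t$-component can be arranged to be $c/2$ modulo $c$.

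The parity of $N$ should come out of this central bookkeeping. Composing $\psi$ with $(m+n\tau, \ast)$ changes the effective $t$-shift by amounts of the form $2mn\Imaginary\tau \pm 4\Imaginary(\cdots)$, and these are multiples of $\frac{Nc}{2}\cdot\frac{2}{N} = c/\!\ldots$. The claim to verify is that when $N$ is odd, every coset $\Lambda\psi$ contains an element $(0,0)\mu(-1)$ conjugate by $h$, hence torsion. I expect this to be the main obstacle: tracking the $t$-components of $\lambda\psi$ under conjugation by $(w,0)$ with $w \in \frac1N L_\tau$, and showing that the shift $s$ can be adjusted by $c/2$ precisely when $N$ is odd. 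That contradiction forces $N$ even. With $N$ even, the same computations then show that the torsion-free cosets are all conjugate to $\varphi$. I would organize this last step as an explicit finite computation on $\zeta \in \frac1N L_\tau / \frac2N L_\tau$, four classes, combined with $s \in \frac c2 \bbZ / c\bbZ$, possibly also using $\tau \mapsto \tau+1$-type changes of basis of $L_\tau$ where these are induced by conjugation.
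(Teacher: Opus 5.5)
Your preliminary reductions are correct: $\Gamma\cap\bbH=\Lambda_{\tau,c}$, $\zeta\in\frac1N L_\tau$, $2s\in c\bbZ$, conjugation by $(w,0)$ with $w\in\frac1N L_\tau$ moves $\zeta$ by $2w$, and $(\zeta',s')\mu(-1)$ is torsion exactly when $s'=0$. But the step that carries the content of the proposition is never carried out, and you acknowledge this yourself.

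\begin{itemize}
\item That $N$ is even is left as ``the main obstacle.''
\item For $N$ even, excluding the three nonzero classes of $\zeta$ in $\frac1N L_\tau/\frac2N L_\tau$ is deferred to a case check you do not perform.
\item In the $N$ odd branch, your argument ends at $\psi=\varphi$. That is not a contradiction by itself: you still must show that $\langle\Lambda_{\tau,c},\varphi\rangle$ has torsion when $N$ is odd. It does: $(1+\tau,-c/2)\in\Lambda_{\tau,c}$ if and only if $N$ is odd, and $(1+\tau,-c/2)\varphi=(1+\tau,0)\mu(-1)$ is an involution.
\end{itemize}

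Your fixed-point discussion cannot supply this, because it looks only at $\psi$ itself, and $z=\zeta/2$ always solves its $\bbC$-equation. The relevant quantity is the $t$-component of \emph{every} element of the coset $\Lambda_{\tau,c}\psi$, and that is not constant on the coset.

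The missing idea is one explicit formula for the whole coset. Take $\zeta=(k+l\tau)/N$ and $\lambda=(m+n\tau,\,2mn\Imaginary\tau+pc)\in\Lambda_{\tau,c}$. Using $4\Imaginary\tau=Nc$ and $4\Imaginary((m+n\tau)\bar\zeta)=(kn-lm)c$, one finds $(\lambda\psi)^2=\bigl(0,\,c\,(2s/c+mnN+kn-lm+2p)\bigr)$. Since $p$ is free, $\Gamma$ is torsion-free if and only if $2s/c+mnN+kn-lm$ is odd for all $m,n\in\bbZ$.

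The choices $(m,n)=(0,0),(0,1),(1,0),(1,1)$ then give, in turn, that $2s/c$ is odd, $k$ is even, $l$ is even, and $N$ is even. Evenness of $k,l$ is exactly what puts $-\zeta/2$ in $\frac1N L_\tau$. Proposition~\ref{prop:conjugate} then lets you conjugate by $(-\zeta/2,0)$ to obtain $(0,s)\mu(-1)$. Oddness of $2s/c$ lets you multiply by a central element of $\Lambda_{\tau,c}$ to reach $\varphi$.

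This is the paper's argument. It makes your separate odd/even case analysis, and any change of basis of $L_\tau$, unnecessary.
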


\begin{proof}
    Without loss of generality, we may assume that $\Gamma \cap \bbH = \Lambda_{\tau, c}$.
    Take $g = (\zeta, \sigma) \in \bbH$ such that $\gamma \coloneqq g \mu(- 1) \in \Gamma$.
    Note that $\Gamma = \langle \Lambda_{\tau, c}, \gamma \rangle$.
    We can see that $2 \sigma \in c \bbZ$ since $\gamma^{2} = (0, 2 \sigma) \in \Gamma \cap \bbH = \Lambda_{\tau, c}$.
    Moreover, it follows from $\gamma \Lambda_{\tau, c} \gamma^{-1} = \Lambda_{\tau, c}$ that
    \begin{align}
        \gamma (1, 0) \gamma^{- 1} &= (- 1, 4 \Imaginary \bar{\zeta})= (- 1, 0) (0, 4 \Imaginary \bar{\zeta}) \in \Lambda_{\tau, c}, \\
        \gamma (\tau, 0) \gamma^{- 1} &= (- \tau, 4 \Imaginary (\tau \bar{\zeta})) = (- \tau, 0) (0, 4 \Imaginary (\tau \bar{\zeta})) \in \Lambda_{\tau, c},
    \end{align}
    which implies that $4 \Imaginary \bar{\zeta}, 4 \Imaginary (\tau \bar{\zeta}) \in c \bbZ$.
    Hence there exist $k, l \in \bbZ$ such that
    \begin{equation}
        \zeta = \frac{c}{4 \Imaginary \tau} (k + l \tau) = \frac{k + l \tau}{N}.
    \end{equation}
    It remains to determine the condition under which $\Gamma$ is torsion-free.
    Every element of $\Lambda_{\tau, c} \setminus \{(0, 0)\}$ has infinite order.
    Any element $\gamma' \in \Gamma \cap \proj_{2}^{-1}(- 1)$ is of the form
    \begin{equation}
        \gamma' = (m + n \tau, 2 m n \Imaginary \tau + p c) \gamma = (\zeta + m + n \tau, \sigma + m n N c/ 2 + (k n - l m) c / 2 + p c) \mu(- 1)
    \end{equation}
    for some $m, n, p \in \bbZ$.
    This  $\gamma'$ is not torsion if and only if
    \begin{equation}
        (\gamma')^{2} = (0, 2 \sigma + m n N c + (k n - l m) c + 2 p c) \neq (0, 0).
    \end{equation}
    Since $p \in \bbZ$ is arbitrary,
    $\Gamma$ is torsion-free if and only if
    \begin{equation}
        \frac{2 \sigma}{c} + m n N + k n - l m
    \end{equation}
    is an odd integer for any $m, n \in \bbZ$.
    In particular, $2 \sigma / c$ must be an odd integer.
    By choosing $m$ and $n$ appropriately, if at least one of $k$ and $l$ is odd, then $2 \sigma / c + m n N + k n - l m$ is even, which is a contradiction.
    Hence both $k$ and $l$ are even.
    A similar argument shows that $N$ must also be even.
    
    Set
    \begin{equation}
        \zeta' \coloneqq - \frac{\zeta}{2} = - \frac{k + l \tau}{2 N}.
    \end{equation}
    It follows from Proposition~\ref{prop:conjugate} that $h \coloneqq (\zeta', 0) \in \bbH$ satisfies $h \Lambda_{\tau, c} h^{-1} = \Lambda_{\tau, c}$.
    Moreover, we have
    \begin{equation}
        h \gamma h^{-1} = (\zeta', 0) (\zeta, \sigma) (\zeta', 0) \mu(- 1) = (0, \sigma) \mu(- 1),
    \end{equation}
    and hence
        \begin{equation}
            h\Gamma h^{-1} = \langle \Lambda_{\tau, c}, (0, \sigma) \mu(- 1) \rangle.
        \end{equation}
    Since $2 \sigma / c$ is odd, $\sigma$ is written as $c(2 q + 1)/2$ for some $q \in \bbZ$, and so we have
        \begin{equation}
            (0,qc)\varphi = (0,c(2 q + 1)/2)\mu(-1) =(0, \sigma)\mu(-1).
        \end{equation}
    Therefore, we get
         \begin{equation}
             \langle \Lambda_{\tau, c}, (0, \sigma) \mu(- 1) \rangle
        = \Gamma_{\tau, c}.
         \end{equation}
    Summarizing the above, $N$ must be even and $\Gamma$ is conjugate in $\bbH \rtimes \bbC^{\ast}$ to $\Gamma_{\tau, c}$.
\end{proof}

This proposition implies that $\Gamma \backslash \bbH$ is isomorphic to $(M \coloneqq \Gamma_{\tau, c} \backslash \bbH, T^{1, 0} M, r \theta_{\bbH})$ for some $r > 0$ as a pseudo-Hermitian manifold.
Hence it suffices to study the joint spectrum of $\Delta_{b}$ and $i^{- 1} T$ on this pseudo-Hermitian manifold.
This was already proved by the first author~\cite{Suzuki2024preprint}*{Section 4}; however we give another proof here using our method.

\begin{thm}
\label{thm:joint spectrum of order 2}
    The multiplicity of $\sigma_{n, k}$ in the joint spectrum of $\Delta_{b}$ and $i^{- 1} T$ on $M$ is given by $\abs{n} N / 2 + (- 1)^{n + k}$.
    The multiplicity of $(\kappa, 0)$ in the joint spectrum is equal to $\# E_{\kappa} / 2$ for any $\kappa > 0$.
\end{thm}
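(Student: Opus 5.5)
The plan is to reduce to a character computation for the group $\Gamma_{\tau, c} / \Lambda_{\tau, c} \cong \bbZ / 2\bbZ$, generated by the class of $\varphi$.

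A function on $M = \Gamma_{\tau, c} \backslash \bbH$ is the same as a $\Lambda_{\tau, c}$-invariant function $f$ on $\bbH$ with $f(\varphi \cdot x) = f(x)$. So each joint eigenspace of $M$ is the $(+1)$-eigenspace of the involution $\Phi f \coloneqq f \circ \varphi$ acting on the corresponding joint eigenspace of $\Lambda_{\tau, c} \backslash \bbH$. Since $\varphi$ is a pseudo-Hermitian automorphism, $\Phi$ commutes with $\Delta_{b}$ and $T$, so it preserves these eigenspaces. The dimension of the fixed space of an involution $\Phi$ on a space $V$ is
\begin{equation}
    \tfrac{1}{2} \left( \dim V + \operatorname{tr} \Phi|_{V} \right).
\end{equation}
Combined with the theorem on the joint spectrum of $M_{\tau, c}$, it therefore suffices to show that $\operatorname{tr} \Phi = 2 (-1)^{n + k}$ on the $\sigma_{n, k}$-eigenspace and $\operatorname{tr} \Phi = 0$ on the $(\kappa, 0)$-eigenspace for $\kappa > 0$.

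\textbf{The case $\kappa > 0$.} Here $\varphi(z, t) = (-z, t + c/2)$, so $\Phi \chi_{\zeta} = \chi_{-\zeta}$. This is a fixed-point-free permutation of the basis $\{ \chi_{\zeta} \}_{\zeta \in E_{\kappa}}$, since $\zeta \neq 0$. Hence the trace is $0$ and the multiplicity is $\# E_{\kappa} / 2$.

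\textbf{The case $n > 0$.} The eigenspace is $\mathscr{H}_{n, k}$, with basis $Z^{k} f_{j}$. I would carry out three steps.
\begin{enumerate}
    \item Since $\varphi = (0, c/2) \mu(-1)$ and $\mu(-1)_{\ast} Z = -Z$, while the central translation commutes with left-invariant fields, we get $\Phi Z^{k} = (-1)^{k} Z^{k} \Phi$. It therefore suffices to compute $\Phi$ on $\mathscr{H}_{n, 0}$ and multiply by $(-1)^{k}$.
    \item On $f_{j}$, the factor $e^{2 \pi i n t / c}$ acquires $e^{\pi i n} = (-1)^{n}$, and the Gaussian factor $\exp(\frac{2 \pi n}{c}(-\abs{z}^{2} + z^{2}))$ is even in $z$.
    \item Replacing $m$ by $-m$ in the theta series gives $h_{j}(-z) = h_{j'}(z)$ with $j' \equiv -j \pmod{nN}$; the series depends only on $j$ modulo $nN$. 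Thus $\Phi f_{j} = (-1)^{n} f_{-j \bmod nN}$.
\end{enumerate}
So $\Phi$ is $(-1)^{n}$ times a permutation matrix. Its fixed indices are the solutions of $2j \equiv 0 \pmod{nN}$, namely $j = 0$ and $j = nN/2$. These are two distinct indices precisely because $N$ is even by Proposition~\ref{prop:HB group of order 2}. The trace is therefore $2(-1)^{n+k}$, and the multiplicity is $(nN + 2(-1)^{n+k})/2 = \abs{n} N / 2 + (-1)^{n+k}$.

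\textbf{The case $n < 0$.} Complex conjugation commutes with $\Phi$, and the basis $\bar{Z}^{k} \bar{f}_{j}$ transforms in the same way, since $\mu(-1)_{\ast} \bar{Z} = -\bar{Z}$ and $(-1)^{\abs{n}} = (-1)^{n}$. This gives the same formula.

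The main obstacle is the careful bookkeeping in the theta-function step. One must check that the reindexing $m \mapsto -m$ sends $h_{j}$ exactly to $h_{-j \bmod nN}$ with no extra phase, and that the commutation sign $\Phi Z^{k} = (-1)^{k} Z^{k} \Phi$ holds with the chosen convention $\Phi f = f \circ \varphi$, i.e.\ that the correct pushforward or pullback of $Z$ is used. Evenness of $N$ is exactly what makes the fixed-point count equal to $2$ rather than $1$.
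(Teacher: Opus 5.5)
Your proposal is correct and matches the paper's proof essentially step for step. The paper uses Schur orthogonality for $\langle \varphi \rangle \cong \bbZ/2\bbZ$, which is your involution trace formula. It shows $\varphi^{\ast}\chi_{\zeta} = \chi_{-\zeta}$, and derives $\varphi^{\ast}(Z^{k} f_{j}) = (-1)^{n+k} Z^{k} f_{nN-j}$ from $\varphi^{\ast}(Zf) = -Z(\varphi^{\ast}f)$ together with the reindexing $m' = -m-1$; this is equivalent to your $m \mapsto -m$ plus periodicity of $h_{j}$ in $j$ modulo $nN$. Evenness of $nN$ then gives the trace $2(-1)^{n+k}$, and the case $n<0$ follows by complex conjugation.
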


\begin{proof}
    Any function on $M$ can be identified with a $\varphi$-invariant function on $M_{\tau, c}$.
    Denote by $\mathscr{H}_{n}^{\varphi}$ the space of $\varphi$-invariant functions in $\mathscr{H}_{n}$.
    Then $L^{2}(M) = \bigoplus_{n \in\bbZ} \mathscr{H}_{n}^{\varphi}$.
    
    In the case $n = 0$, one has $\mathscr{H}_{0} = \bigoplus_{\zeta \in L_{\tau}^{\prime}} \bbC \chi_{\zeta}$
    and $\Delta_{b} \chi_{\zeta} = (\pi^{2} \abs{\zeta}^{2} / r) \chi_{\zeta}$.
    Since $\varphi^{\ast} \chi_{\zeta} = \chi_{- \zeta}$,
    the multiplicity of the eigenvalue $(\kappa, 0)$ in the joint spectrum of $\Delta_{b}$ and $i^{- 1} T$ on $M$ is equal to $\# E_{\kappa} / 2$ for any $\kappa > 0$.

    Assume that $n > 0$.
    Set $\mathscr{H}_{n, k}^{\varphi} = \mathscr{H}_{n, k} \cap \mathscr{H}_{n}^{\varphi}$, which is the space of $\varphi$-invariant functions in $\mathscr{H}_{n, k}$.
    Note that $\mathscr{H}_{n}^{\varphi} = \bigoplus_{k \in \bbZ_{\geq 0}} \mathscr{H}_{n, k}^{\varphi}$ and
    $\mathscr{H}_{n, k}^{\varphi}$ is the joint eigenspace corresponding to the eigenvalue $\sigma_{n, k}$ of $\Delta_{b}$ and $i^{- 1} T$ on $M$.
    To compute the dimension of $\mathscr{H}_{n, k}^{\varphi}$,
    we consider the representation $\rho$ of $\Gamma_{\tau, c} / \Lambda_{\tau, c} \cong \langle \varphi \rangle \cong \bbZ / 2 \bbZ$ on $\mathscr{H}_{n, k}$ defined by the pullback.
    The Schur orthogonality relation implies that
    \begin{equation}
        \dim \mathscr{H}_{n, k}^{\varphi} = \frac{1}{2} \sum_{l = 0}^{1} \chi_{\rho}(\varphi^{l}) = \frac{1}{2} (n N + \chi_{\rho}(\varphi)),
    \end{equation}
    where $\chi_{\rho}$ is the character of $\rho$.
    Hence it suffices to compute the value $\chi_{\rho}(\varphi)$.
    For each $0 \leq j \leq n N - 1$,
    \begin{align}
        (\varphi^{\ast} f_{j})(z, t)
        &= f_{j}(- z, t + c / 2) \\
        &= \exp \left( \frac{2 \pi n}{c} \left(i t + \frac{i c}{2} - \abs{- z}^{2} + (- z)^{2} \right) \right) h_{j}(- z) \\
        &= (- 1)^{n} \exp \left( \frac{2 \pi n}{c} (i t - \abs{z}^{2} + z^{2}) \right) h_{j}(- z).
    \end{align}
    It follows from the definition of $h_{j}$ that
    \begin{align}
        h_{j}(- z)
        &= \sum_{m \in \bbZ} \exp \left( \pi i n N \tau \left( m + \frac{j}{n N} \right)^{2} + 2 \pi i n N \left( m + \frac{j}{n N} \right) (- z) \right) \\
        &= \sum_{m \in \bbZ} \exp \left( \pi i n N \tau \left( - m - 1 + \frac{n N - j}{n N} \right)^{2} + 2 \pi i n N \left( - m - 1 + \frac{n N - j}{n N} \right) z \right) \\
        &= \sum_{m' \in \bbZ} \exp \left( \pi i n N \tau \left( m' + \frac{n N - j}{n N} \right)^{2} + 2 \pi i n N \left( m' + \frac{n N - j}{n N} \right) z \right) \\
        &= h_{n N - j}(z),
    \end{align}
    where $m' = -m -1$. Here, we adopt the convention that $h_{n N} = h_{0}$.
    Moreover, one can see that
    \begin{equation}
        (\varphi^{\ast} (Z f))(z, t) = (Z f)(- z, t + c / 2) = - (Z (\varphi^{\ast} f))(z, t)
    \end{equation}
    for any $f \in C^{\infty}(\bbH)$.
    Thus we have
    \begin{equation}
        \varphi^{\ast} (Z^{k} f_{j}) = (- 1)^{k} Z^{k} (\varphi^{\ast} f_j) = (- 1)^{n + k} Z^{k} f_{n N - j}
    \end{equation}
    with the convention that $f_{n N} = f_{0}$.
    Since $n N$ is even, $\chi_{\rho}(\varphi) = 2 (- 1)^{n + k}$.
    Therefore we obtain
    \begin{equation}
        \dim \mathscr{H}_{n, k}^{\varphi} = \frac{1}{2} (n N + 2 (- 1)^{n + k}) = \frac{n N}{2} + (- 1)^{n + k}.
    \end{equation}

    In the case $n < 0$, we have the orthogonal decomposition $\mathscr{H}_{n}^{\varphi} = \bigoplus_{k \geq 0} \overline{\mathscr{H}_{- n, k}^{\varphi}}$ and $\overline{\mathscr{H}_{- n, k}^{\varphi}}$ is the joint eigenspace corresponding to the eigenvalue $\sigma_{n, k}$ of $\Delta_{b}$ and $i^{- 1} T$ on $M$.
    In particular,
    \begin{equation}
        \dim \overline{\mathscr{H}_{- n, k}^{\varphi}} = \dim \mathscr{H}_{- n, k}^{\varphi} = \frac{\abs{n} N}{2} + (- 1)^{n + k},
    \end{equation}
    which completes the proof.
\end{proof}

\section{Joint spectrum on Heisenberg Bieberbach manifolds $\Gamma \backslash \bbH$ with $\proj_{2}(\Gamma) = \langle i \rangle$}

In this section, $\equiv$ is understood modulo $4$.
Let $\Gamma$ be a Heisenberg Bieberbach group with $\proj_{2}(\Gamma) = \langle i \rangle$, and consider the Heisenberg Bieberbach manifold $\Gamma \backslash \bbH$.
Recall that $N = \# (Z(\Lambda) / [\Lambda, \Lambda])$, where $\Lambda = \Gamma \cap \bbH$.
We first give a classification of such $\Gamma$ up to conjugacy in $\bbH \rtimes \bbC^{\ast}$.

\begin{prop}
\label{prop:HB group of order 4}
    Let $\Gamma$ be a Heisenberg Bieberbach group satisfying $\proj_{2}(\Gamma) = \langle i \rangle$ and set $c = 4 / N$.
    Then $N$ must be even and $\Gamma$ is conjugate in $\bbH \rtimes \bbC^{\ast}$ to either
    \begin{equation}
        \Gamma_{c} \coloneqq \langle \Lambda_{i, c}, \psi \coloneqq (0, c / 4) \mu(i) \rangle \quad \text{ or } \quad \Gamma_{c}^{\prime} \coloneqq \langle \Lambda_{i, c}, \psi^{\prime} \coloneqq (0, 3 c / 4) \mu(i) \rangle.
    \end{equation}
\end{prop}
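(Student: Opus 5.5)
By Proposition~\ref{prop:lattice}, I first conjugate so that $\Gamma \cap \bbH = \Lambda_{\tau, c}$. A CR automorphism $\mu(\lambda)$ rescales by $\abs{\lambda}^{2}$, so $\Imaginary\tau$ is not normalized. Next I pick $\gamma = (\zeta, \sigma)\mu(i) \in \Gamma$. Multiplication by $i$ must preserve $L_{\tau} = \langle 1, \tau\rangle$, so $i \in L_{\tau}$ and $i\tau \in L_{\tau}$. Since $L_\tau$ is then a $\bbZ[i]$-module of rank one containing $1$ as a primitive vector, $L_\tau = \bbZ[i]$, and hence $\langle 1,\tau\rangle = \langle 1, i\rangle$. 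This gives $\tau = i + m$ for some $m\in\bbZ$. Conjugating by the group automorphism $(z,t)\mapsto$ (change of basis $\tau \mapsto \tau - m$) is not CR, so instead I note that $\Lambda_{\tau,c}$ depends only on the generators. Because $(\tau,0) = (i,0)(m,0)\cdot(\text{central})$, and the central correction is $2m\Imaginary\tau$-type, I check that $\Lambda_{i+m,c} = \Lambda_{i,c}$ when $c \mid 2m$ or after translating by an element of $\bbH$ as in the proof of Proposition~\ref{prop:lattice}. Either way the goal is to reach $\Lambda = \Lambda_{i,c}$ with $\Imaginary\tau = 1$, so $c = 4/N$.

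Then I copy the argument of Proposition~\ref{prop:HB group of order 2}. The conjugation conditions are $\gamma(1,0)\gamma^{-1}, \gamma(i,0)\gamma^{-1} \in \Lambda_{i,c}$. Computing these gives $\gamma(1,0)\gamma^{-1} = (i, \ast)$ and $\gamma(i,0)\gamma^{-1} = (-1,\ast)$, with the $\ast$ terms linear in $\Imaginary(\bar\zeta)$ and $\Imaginary(i\bar\zeta)$. These conditions force $\zeta$ into $\frac{1}{N}\bbZ[i]$, possibly up to a half-shift. I then conjugate by $h = (\zeta',0)$, chosen so that $h\gamma h^{-1}$ has vanishing $\bbC$-component. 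This requires solving $\zeta' + i\zeta' \cdot(\ldots) + \zeta = 0$, that is, $(1-i)\zeta' = -\zeta$ roughly. I also need $h$ to normalize $\Lambda_{i,c}$, which by Proposition~\ref{prop:conjugate} means $\zeta' \in \frac1N\bbZ[i]$. Since $1-i$ is not a unit, this is exactly where torsion-freeness has to enter.

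For torsion-freeness I compute $\gamma^4 = (0, 4\sigma + \text{correction in } \zeta)$. More generally, I take every element $\lambda\gamma^{j}$ with $j=1,2,3$ and compute its appropriate power (square or fourth power), which lies in $Z(\bbH)\cap\Lambda$ and must be nonzero. Elements over $-1$ are $\lambda\gamma^2$, and Proposition~\ref{prop:HB group of order 2} applied to $\langle\Lambda,\gamma^2\rangle$ already gives that $N$ is even and that $\gamma^2$'s $t$-part is an odd multiple of $c/2$, up to a lattice shift. For elements $\lambda\gamma$ over $i$, $(\lambda\gamma)^4 = (0, 4\sigma' )$ with $\sigma'$ ranging over $\sigma + (\text{terms}) + c\bbZ$. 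Requiring $4\sigma'/c \notin 4\bbZ$-type congruences for all $\lambda$, I expect to show that $\zeta \in (1-i)\frac1N\bbZ[i]$ modulo the allowed set. If it is not, some $\lambda$ yields torsion. This allows the conjugation above, after which $\gamma = (0,\sigma)\mu(i)$.

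Finally, $\gamma^4 = (0,4\sigma) \in \Lambda$ gives $4\sigma \in c\bbZ$. The element $\gamma^2 = (0,2\sigma)\mu(-1)$ is torsion-free only if $2\sigma/c$ is odd (from the order-2 case), so $4\sigma/c \equiv 2 \pmod 4$ is excluded and also multiples of $4$ are excluded. Hence $4\sigma/c$ is odd, i.e.\ $\sigma \equiv c/4$ or $3c/4$ modulo $c$. Multiplying by central elements $(0,qc)\in\Lambda$ then gives $\Gamma_c$ or $\Gamma_c'$. The main obstacle is the middle step: proving that torsion-freeness forces $\zeta$ into the coset that is divisible by $(1-i)$, so that the translation $h$ normalizing $\Lambda_{i,c}$ exists. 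This needs a careful parity analysis of $(\lambda\gamma)^4$ over all $\lambda = (m+ni, \ast)$.
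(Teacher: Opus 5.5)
Your outline has the same skeleton as the paper's proof. You normalize to $\Lambda_{i,c}$, obtain $\zeta=(k+li)/N$ from the normalizer condition, remove $\zeta$ by a translation $h=(\zeta',0)$ with $(1-i)\zeta'=-\zeta$, and then read off that $4\sigma/c$ is odd. However, the step you flag as the main obstacle is not proved, and the method you propose for it cannot work. Fourth powers of elements over $i$ do not detect the obstruction. For $\lambda\in\Lambda_{i,c}$ with $\bbC$-part $a+bi$ one has $(\lambda\gamma)^2=\lambda(\gamma\lambda\gamma^{-1})\gamma^2$, and the $\bbC$-part of $\lambda(\gamma\lambda\gamma^{-1})$ is $(1+i)(a+bi)$. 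So squares of elements over $i$ only reach those $\lambda'\gamma^2$ whose $\lambda'$ has $\bbC$-part in $(1+i)\bbZ[i]$.

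Here is a concrete counterexample. Take $N=2$, $c=2$ and $\gamma=(1/2,3/4)\mu(i)$. This $\gamma$ normalizes $\Lambda_{i,2}$ and has $\gamma^4=(0,c)$, so $N$ is even and $q=1$ is odd. For $\lambda=(m+ni,2mn+2p)$ one finds $(\lambda\gamma)^4=\bigl(0,c\,(1+4mn+4p-2m(m+1)-2n(n-1))\bigr)$, which is never trivial. Hence every element over $\pm i$ has infinite order, even though $\zeta=1/2\notin(1-i)\tfrac12\bbZ[i]$. The group is nevertheless not torsion-free: $(1,0)\gamma^2=((3+i)/2,0)\mu(-1)$ has order $2$. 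So your expected claim ``if $\zeta$ is not in that coset, some $\lambda\gamma$ is torsion'' is false. The conditions you actually extract ($N$ even, $q$ odd) are not sufficient.

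The missing idea is already in Proposition~\ref{prop:HB group of order 2}. Its proof shows that for $(\zeta_2,\sigma_2)\mu(-1)$ with $\zeta_2=(k_2+l_2\tau)/N$, torsion-freeness forces $N$ even and $2\sigma_2/c$ odd, and also $k_2,l_2$ even. Apply this to $\gamma^2=((1+i)\zeta,\,2\sigma-2\abs{\zeta}^2)\mu(-1)$, whose $\bbC$-part is $((k-l)+(k+l)i)/N$. You get $q=(4\sigma-4\abs{\zeta}^2)/c$ odd and $k\equiv l \pmod 2$. The latter says exactly that $\zeta'=-((k-l)+(k+l)i)/(2N)\in\tfrac1N\bbZ[i]$, so $h$ normalizes $\Lambda_{i,c}$ by Proposition~\ref{prop:conjugate}. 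Moreover, a torsion element over $\pm i$ squares to a torsion element over $-1$. Hence $\Gamma$ is torsion-free if and only if $\langle\Lambda_{i,c},\gamma^2\rangle$ is, and no analysis of fourth powers is needed; this is the paper's argument.

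There are also three smaller points.
\begin{enumerate}
\item $i$-invariance of $L_\tau$ together with primitivity of $1$ does not force $L_\tau=\bbZ[i]$. For example, $\tau=(1+i)/2$ gives the $i$-invariant lattice $L_\tau=\tfrac{1-i}{2}\bbZ[i]$. Instead, in Proposition~\ref{prop:lattice} choose $\omega_2=i\omega_1$ with $\omega_1$ a $\bbZ[i]$-generator of $L(\Lambda)$; this gives $\tau=i$ at once.
\item The normalizer condition gives $\zeta\in\tfrac1N\bbZ[i]$ with no half-shift, since $(i,0)$ and $(-1,0)$ lie in $\Lambda_{i,c}$.
\item For $\gamma^2=(0,2\sigma)\mu(-1)$, Proposition~\ref{prop:HB group of order 2} requires $4\sigma/c$, not $2\sigma/c$, to be odd. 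Your final conclusion is right, but the intermediate sentence contradicts it.
\end{enumerate}
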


\begin{proof}
    Without loss of generality, we may assume that $\Gamma \cap \bbH = \Lambda_{\tau, c}$.
    In addition, we can assume that $\tau = i$ since $L_{\tau}$ is invariant under the multiplication by $i$.
    Take $g = (\zeta, \sigma) \in \bbH$ such that $\gamma \coloneqq g \mu(i) \in \Gamma$.
    Note that $\Gamma = \langle \Lambda_{i, c}, \gamma \rangle$.
    We can see that
    \begin{equation}
        \gamma^{2} = (\zeta, \sigma) (i \zeta, \sigma) \mu(i^{2}) = (\zeta + i \zeta, 2 \sigma - 2 \abs{\zeta}^{2}) \mu(- 1)
    \end{equation}
    and $\gamma^{4} = (0, 4 \sigma - 4 \abs{\zeta}^{2}) \in \Gamma \cap \bbH = \Lambda_{i, c}$,
    which implies $q \coloneqq (4 \sigma - 4 \abs{\zeta}^{2}) / c \in \bbZ$.
    Moreover, it follows from $\gamma \Lambda_{i, c} \gamma^{-1} = \Lambda_{i, c}$ that
    \begin{align}
        \gamma (1, 0) \gamma^{- 1} &= (i, - 4 \Real \bar{\zeta}) = (i, 0)(0, - 4 \Real \bar{\zeta}) \in \Lambda_{i, c}, \\
        \gamma (i, 0) \gamma^{- 1} &= (- 1, 4 \Imaginary \bar{\zeta}) = (- 1, 0) (0, 4 \Imaginary \bar{\zeta}) \in \Lambda_{i, c},
    \end{align}
    which implies $4 \Real \bar{\zeta}, 4 \Imaginary \bar{\zeta} \in c \bbZ$.
    Hence there exist $k, l \in \bbZ$ such that
    \begin{equation}
        \zeta = \frac{c}{4} (k + l i) = \frac{k + l i}{N}.
    \end{equation}
    It remains to determine the condition under which $\Gamma$ is torsion-free.
    If an element $\gamma'$ in $\Gamma \cap \proj_{2}^{-1}(i)$ or $\Gamma \cap \proj_{2}^{-1}(- i)$ is a torsion element, then so is $(\gamma')^{2} \in \Gamma \cap \proj_{2}^{-1}(- 1)$.
    Hence $\Gamma$ is torsion-free if and only if $\Gamma \cap \proj_{2}^{-1}(\langle - 1 \rangle) = \langle \Lambda_{i, c}, \gamma^{2} \rangle$ is torsion-free.
    It follows from the proof of Proposition~\ref{prop:HB group of order 2} that $\langle \Lambda_{i, c}, \gamma^{2} \rangle$ is torsion-free if and only if $N$ and $k - l$ are even and $q$ is odd.
    
    Now we set
    \begin{equation}
        \zeta' \coloneqq - \frac{\zeta}{1 - i} = - \frac{k - l + (k + l) i}{2 N}.
    \end{equation}
    It follows from Proposition~\ref{prop:conjugate} that $h \coloneqq (\zeta', 0) \in \bbH$ satisfies $h \Lambda_{i, c} h^{-1} = \Lambda_{i, c}$.
    Moreover, we have
    \begin{equation}
        h \gamma h^{-1} = (\zeta', 0) (\zeta, \sigma) (- i \zeta', 0) \mu(i) = (0, \sigma - \abs{\zeta}^{2}) \mu(i) = (0, q c / 4) \mu(i),
    \end{equation}
    and so $h \Gamma h^{-1} = \langle \Lambda_{i, c}, (0, q c / 4) \mu(i) \rangle$.
    Since $q$ is odd,  $\Gamma$ is conjugate in $\bbH \rtimes \bbC^{\ast}$ to
    \begin{equation}
        \langle \Lambda_{i, c}, (0, q c / 4) \mu(i) \rangle
        =
        \begin{cases}
            \Gamma_{c} & \text{if $q \equiv 1$}, \\
            \Gamma_{c}^{\prime} & \text{if $q \equiv 3$},
        \end{cases}
    \end{equation}
    which completes the proof.
\end{proof}

\begin{rem}
    Assume that $N \equiv 2$.
    Set $\zeta \coloneqq (1 + i) / 2$.
    It follows from Proposition~\ref{prop:conjugate} that $h \coloneqq (\zeta, 0) \in \bbH$ satisfies $h \Lambda_{i, c} h^{-1} = \Lambda_{i, c}$.
    Moreover, the equality $c N / 4 = 1$ implies
    \begin{equation}
        h (0, c / 4) \mu(i) h^{-1} = (\zeta, 0) (- i \zeta, c / 4) \mu(i) = (1, c (N - 2) / 4) (0, 3 c / 4) \mu(i).
    \end{equation}
    Since $(1, c (N - 2) / 4) \in \Lambda_{i, c}$,
    we have $h \Gamma_{c} h^{-1} = \Gamma_{c}^{\prime}$.
    On the other hand, if $N \equiv 0$,
    then $\Gamma_{c}$ is not conjugate to $\Gamma_{c}^{\prime}$ in $\bbH \rtimes \bbC^{\ast}$; see Remark~\ref{rem:non conjugate of order 4}.
\end{rem}

Proposition~\ref{prop:HB group of order 4} implies that $\Gamma \backslash \bbH$ is isomorphic to either $(M \coloneqq \Gamma_{c} \backslash \bbH, T^{1, 0} M, r \theta_{\bbH})$ or $(M^{\prime} \coloneqq \Gamma_{c}^{\prime} \backslash \bbH, T^{1, 0} M^{\prime}, r \theta_{\bbH})$ for some $r > 0$ as a pseudo-Hermitian manifold.
Hence it suffices to study the joint spectrum of $\Delta_{b}$ and $i^{- 1} T$ on these pseudo-Hermitian manifolds.
We first consider $(M, T^{1, 0} M, r \theta_{\bbH})$.

\begin{thm}
    The multiplicity $m_{n, k}$ of $\sigma_{n, k}$ in the joint spectrum of $\Delta_{b}$ and $i^{- 1} T$ on $M$ is given by
    \begin{equation}
        m_{n, k} =
        \begin{cases}
            \frac{1}{4} \left( \abs{n} N + 2 (- 1)^{n + k} + 2 \cos \frac{\pi (\abs{n} - k)}{2} - 2 \sin \frac{\pi (\abs{n} - k)}{2} \right) & \text{if $\abs{n} N \equiv 0$}, \\
            \frac{1}{4} (\abs{n} N + 2 (- 1)^{n + k}) & \text{if $\abs{n} N \equiv 2$}.
        \end{cases}
    \end{equation}
    The multiplicity of $(\kappa, 0)$ in the joint spectrum is equal to $\# E_{\kappa} / 4$ for any $\kappa > 0$.
\end{thm}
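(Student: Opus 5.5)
We follow the strategy of Theorem~\ref{thm:joint spectrum of order 2}. Functions on $M$ are the $\psi$-invariant functions on $M_{i, c}$. The group $\Gamma_{c} / \Lambda_{i, c} \cong \langle \psi \rangle \cong \bbZ / 4 \bbZ$ acts by pullback $\rho$ on each joint eigenspace $\mathscr{H}_{n, k}$ (for $n > 0$) and on $\mathscr{H}_{0}$. By Schur orthogonality,
\begin{equation}
    \dim \mathscr{H}_{n, k}^{\psi} = \frac{1}{4} \bigl( nN + \chi_{\rho}(\psi) + \chi_{\rho}(\psi^{2}) + \chi_{\rho}(\psi^{3}) \bigr).
\end{equation}
Since $\psi^{2} = (0, c/2) \mu(-1) = \varphi$ (as $(0,c/4)\mu(i)(0,c/4)\mu(i) = (0,c/2)\mu(-1)$), the proof of Theorem~\ref{thm:joint spectrum of order 2} gives $\chi_{\rho}(\psi^{2}) = 2(-1)^{n+k}$. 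Because $\rho$ is unitary, $\chi_{\rho}(\psi^{3}) = \overline{\chi_{\rho}(\psi)}$. So the sum reduces to $nN + 2(-1)^{n+k} + 2 \Real \chi_{\rho}(\psi)$, and the main task is computing $\chi_{\rho}(\psi)$.

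For $n = 0$, we have $\psi^{\ast} \chi_{\zeta} = \chi_{\pm i \zeta}$. For $\kappa > 0$, the orbits of $\langle i \rangle$ on $E_{\kappa}$ are free, so the multiplicity is $\# E_{\kappa}/4$. For $n < 0$, complex conjugation identifies $\mathscr{H}_{n,k}^{\psi}$ with $\overline{\mathscr{H}_{-n,k}^{\psi}}$, so it suffices to treat $n > 0$ and replace $n$ by $\abs{n}$.

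For $n > 0$, first note that $\psi^{\ast}(Zf) = i^{\pm 1} Z(\psi^{\ast} f)$, since $Z$ transforms under $\mu(i)$ by a factor $i$ (or $\bar{i}$). Hence
\begin{equation}
    \chi_{\rho|_{\mathscr{H}_{n,k}}}(\psi) = i^{\pm k} \chi_{\rho|_{\mathscr{H}_{n,0}}}(\psi).
\end{equation}
The sign convention must be fixed so that $\psi^{2}$ reproduces the factor $(-1)^{k}$. On $\mathscr{H}_{n,0}$, compute
\begin{equation}
    \psi^{\ast} f_{j}(z,t) = f_{j}(iz, t + c/4) = e^{\pi i n/2} \exp\Bigl(\tfrac{2\pi n}{c}(it - \abs{z}^{2} - z^{2})\Bigr) h_{j}(iz).
\end{equation}
Rewriting this in the form $\exp(\frac{2\pi n}{c}(it - \abs{z}^{2} + z^{2})) \tilde h(z)$ gives $\tilde h(z) = e^{\pi i n/2} e^{-4\pi n z^{2}/c} h_{j}(iz)$, with $4/c = N$. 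This is the theta-function transformation $\tau \mapsto -1/\tau$ at $\tau = i$. Expanding $\tilde h = \sum_{l} a_{jl} h_{l}$, Poisson summation should give
\begin{equation}
    a_{jl} = \frac{e^{\pi i n/2}\, \epsilon}{\sqrt{nN}}\, e^{\pm 2\pi i jl/(nN)},
\end{equation}
a discrete Fourier transform matrix with some fixed eighth root of unity $\epsilon$.

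The trace of $\psi$ on $\mathscr{H}_{n,0}$ is then $e^{\pi i n/2}\epsilon \cdot (nN)^{-1/2} \sum_{j} e^{2\pi i j^{2}/(nN)}$, a quadratic Gauss sum. Its classical value with $m = nN$ (even) is
\begin{equation}
    \sum_{j=0}^{m-1} e^{2\pi i j^{2}/m} =
    \begin{cases}
        (1+i)\sqrt{m} & \text{if } m \equiv 0, \\
        0 & \text{if } m \equiv 2.
    \end{cases}
\end{equation}
The vanishing case $nN \equiv 2$ immediately gives the second formula. In the case $nN \equiv 0$, the trace is a unit times $(1+i)$, and after multiplying by $i^{\pm k}$ we need $2\Real \chi_{\rho}(\psi) = 2\cos\frac{\pi(n-k)}{2} - 2\sin\frac{\pi(n-k)}{2}$. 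This requires $\chi_{\rho}(\psi) = (1-i)\, i^{\,n-k}$ up to complex conjugation.

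The main obstacle is pinning down the constant $\epsilon$ and the sign conventions exactly. I will fix $\epsilon$ via the case $n=1$ of Jacobi's identity $\vartheta(-1/\tau) = \sqrt{-i\tau}\,\vartheta(\tau)$, which gives $\epsilon = 1$ after the Gaussian normalization. I will then cross-check the final formula on small cases: for $nN = 4$ the multiplicities must be nonnegative integers, and $\psi^{2}$ must square back correctly to $\varphi$.
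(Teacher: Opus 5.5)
Your architecture is the paper's: Schur orthogonality over $\langle\psi\rangle\cong\bbZ/4\bbZ$, $\psi^{2}=\varphi$ so that $\chi_\rho(\psi^{2})=2(-1)^{n+k}$, a Poisson-summation computation of $\psi^{\ast}f_j$, and a quadratic Gauss sum. Your treatment of $n=0$ and of the case $nN\equiv 2 \pmod 4$ is complete, because there the Gauss sum and its conjugate both vanish and the undetermined phases do not matter.

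The gap is the case $nN\equiv 0\pmod 4$, whose whole content is the exact value of $\Real\chi_\rho(\psi)$. You leave three things open: the sign in $\psi^{\ast}(Zf)=i^{\pm1}Z(\psi^{\ast}f)$, the constant $\epsilon$, and the sign in the discrete Fourier matrix. You then try to read the answer off the target, and that step is wrong. We have $\Real\bigl((1-i)\,i^{n-k}\bigr)=\cos\frac{\pi(n-k)}{2}+\sin\frac{\pi(n-k)}{2}$, and its complex conjugate has the same real part, so neither gives the required $\cos-\sin$. The value actually needed, and true, is $\chi_\rho(\psi)=(1+i)\,i^{n-k}$. For $k=0$ and $n$ odd, your stated requirement even contradicts the trace $i^{n}(1+i)$ that your own choices $\epsilon=1$ and $+$ produce.

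Your proposed calibrations cannot fix the phases either. Requiring that $\psi^{2}$ reproduce $(-1)^{k}$ does not distinguish $i$ from $-i$, since both square to $-1$. The integrality check is also blind to this sign. The choice $i^{+k}$ gives $\frac14\bigl(nN+2(-1)^{n+k}+2\cos\frac{\pi(n+k)}{2}-2\sin\frac{\pi(n+k)}{2}\bigr)$, which is the true multiplicity at any $k'\equiv -k \pmod 4$ and hence always a nonnegative integer. Yet it is wrong: for $n=k=1$, $N=4$ it gives $1$ instead of $2$. These phases are precisely what separates $M$ from $M'$, so they have to be computed, not calibrated.

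Both computations are short.
\begin{itemize}
\item The chain rule gives $Z\bigl(f(iz,t+c/4)\bigr)=i\,(Zf)(iz,t+c/4)$, i.e.\ $\psi^{\ast}(Zf)=-i\,Z(\psi^{\ast}f)$.
\item For $g(x)=\exp\bigl(-\pi nN(x+j/(nN)+z)^{2}\bigr)$ one has $\hat g(\xi)=(nN)^{-1/2}e^{-\pi\xi^{2}/(nN)}e^{2\pi i(j/(nN)+z)\xi}$. Splitting $\xi=nNm'+l$ in the Poisson sum yields $\psi^{\ast}f_j=\frac{i^{n}}{\sqrt{nN}}\sum_{l}e^{2\pi i jl/(nN)}f_l$, so $\epsilon=1$ and the sign is $+$.
\end{itemize}
Hence $\chi_\rho(\psi)=\frac{i^{n-k}}{\sqrt{nN}}\sum_{j}e^{2\pi i j^{2}/(nN)}=i^{n-k}(1+i)$. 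Its real part is $\cos\frac{\pi(n-k)}{2}-\sin\frac{\pi(n-k)}{2}$, as in the paper.
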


\begin{proof}
    Any function on $M$ can be identified with a $\psi$-invariant function on $M_{i, c}$.
    Denote by $\mathscr{H}_{n}^{\psi}$ the space of $\psi$-invariant functions in $\mathscr{H}_{n}$.
    Then $L^{2}(M) = \bigoplus_{n \in\bbZ} \mathscr{H}_{n}^{\psi}$.
    
    In the case $n = 0$, one has $\mathscr{H}_{0} = \bigoplus_{\zeta \in L_{i}^{\prime}} \bbC \chi_{\zeta}$,
    and $\Delta_{b} \chi_{\zeta} = (\pi^{2} \abs{\zeta}^{2} / r) \chi_{\zeta}$.
    Since $\psi^{\ast} \chi_{\zeta} = \chi_{- i \zeta}$,
    the multiplicity of the eigenvalue $(\kappa, 0)$ in the joint spectrum of $\Delta_{b}$ and $i^{- 1} T$ on $M$ is equal to $\# E_{\kappa} / 4$ for any $\kappa > 0$.

    Assume that $n > 0$.
    Set $\mathscr{H}_{n, k}^{\psi} \coloneqq \mathscr{H}_{n, k} \cap \mathscr{H}_{n}^{\psi}$, which is the space of $\psi$-invariant functions in $\mathscr{H}_{n, k}$.
    Note that $\mathscr{H}_{n}^{\psi} = \bigoplus_{k \in \bbZ_{\geq 0}} \mathscr{H}_{n, k}^{\psi}$ and
    $\mathscr{H}_{n, k}^{\psi}$ is the joint eigenspace corresponding to the eigenvalue $\sigma_{n, k}$ of $\Delta_{b}$ and $i^{- 1} T$ on $M$.
    To compute the dimension of $\mathscr{H}_{n, k}^{\psi}$,
    we consider the representation $\rho$ of $\Gamma_{c} / \Lambda_{i, c} \cong \langle \psi \rangle \cong \bbZ / 4 \bbZ$ on $\mathscr{H}_{n, k}$ defined by the pullback.
    The Schur orthogonality relation implies that
    \begin{equation}
        \dim \mathscr{H}_{n, k}^{\psi} = \frac{1}{4} \sum_{l = 0}^{3} \chi_{\rho}(\psi^{l}) = \frac{1}{4} (n N + 2 \Real \chi_{\rho}(\psi) + \chi_{\rho}(\psi^{2})).
    \end{equation}
    As we showed in the proof of Theorem~\ref{thm:joint spectrum of order 2}, $\chi_{\rho}(\psi^{2}) = \chi_{\rho}(\varphi) = 2 (- 1)^{n + k}$, where $\varphi$ is as introduced in Proposition~\ref{prop:HB group of order 2}.
    Hence it suffices to compute the value $\chi_{\rho}(\psi)$.
    
    For each $0 \leq j \leq n N - 1$, it follows from $N = 4 / c$ that
    \begin{align}
        (\psi^{\ast} f_{j})(z, t)
        &= f_{j}(i z, t + c / 4) \\
        &= \exp \left( \frac{2 \pi n}{c} \left(i t + \frac{i c}{4} - \abs{i z}^{2} + (i z)^{2} \right) \right) h_{j}(i z) \\
        &= i^{n} \exp \left( \frac{2 \pi n}{c} \left(i t - \abs{z}^{2} + z^{2} -2 z^2 \right) \right) h_{j}(i z) \\
        &= i^{n} \exp \left( \frac{2 \pi n}{c} (i t - \abs{z}^{2} + z^{2}) \right) \exp(- \pi n N z^{2}) h_{j}(i z).
    \end{align}
    Now we get
        \begin{align}
            \exp(- \pi n N z^{2}) h_{j}(i z)
            & = \sum_{m \in \bbZ} \exp \left( - \pi n N z^{2} -\pi n N \left( m + \frac{j}{n N} \right)^{2} - 2 \pi  n N \left( m + \frac{j}{n N} \right) z \right) \\
            & = \sum_{m \in \bbZ} \exp \left( - \pi n N \left( m + \frac{j}{n N} + z \right)^{2} \right) \label{exp-h_j(iz)}.
        \end{align}
    Consider $g(x) \coloneqq \exp(- \pi n N (x + j / n N + z)^{2}) \in \mathcal{S}(\bbR)$.
    Then, the equation \eqref{exp-h_j(iz)} and the Poisson summation formula give
    \begin{equation}
        \exp(- \pi n N z^{2}) h_{j}(i z) = \sum_{m \in \bbZ} g(m) = \sum_{l = 0}^{n N - 1} \sum_{m^{\prime} \in \bbZ} \hat{g}(n N m^{\prime} + l),
    \end{equation}
    where $\hat{g}$ is the Fourier transform of $g$.
    A computation yields
    \begin{align}
        \hat{g}(\xi)
        &= \int_{\bbR} \exp \left( - \pi n N \left( x + \frac{j}{n N} + z \right)^{2} - 2 \pi i x \xi \right) \, d x \\
        &= \frac{1}{\sqrt{n N}} \exp \left( - \frac{\pi}{n N} \xi^{2} + 2 \pi i \left( \frac{j}{n N} + z \right) \xi \right),
    \end{align}
    which implies
    \begin{align}
        &\exp(- \pi N z^{2}) h_{j}(i z) \\
        &= \frac{1}{\sqrt{n N}} \sum_{l = 0}^{n N - 1} \sum_{m^{\prime} \in \bbZ} \exp \left( - \frac{\pi}{n N} (n N m^{\prime} + l)^{2} + 2 \pi i \left( \frac{j}{n N} + z \right) (n N m^{\prime} + l) \right) \\
        &= \frac{1}{\sqrt{n N}} \sum_{l = 0}^{n N - 1} \exp \left( \frac{2 \pi i}{n N}j l  \right) h_{l}(z).
    \end{align}
    Hence we have
    \begin{align}
        (\psi^{\ast} f_{j})(z, t)
        &= i^{n} \exp \left( \frac{2 \pi n}{c} (i t - \abs{z}^{2} + z^{2}) \right) \frac{1}{\sqrt{n N}}  \sum_{l = 0}^{n N - 1} \exp \left( \frac{2 \pi i}{n N} j l  \right) h_{l}(z) \\
        &= \frac{i^{n}}{\sqrt{n N}} \sum_{l = 0}^{n N - 1} \exp \left( \frac{2 \pi i}{n N} j l
        \right) f_{l}(z).
    \end{align}
    Moreover, one can see that
    \begin{equation}
        (\psi^{\ast} (Z f))(z, t) = (Z f)(i z, t + c / 4) = - i (Z (\psi^{\ast} f))(z, t)
    \end{equation}
    for any $f \in C^{\infty}(\bbH)$.
    Thus we have
    \begin{equation}
        \psi^{\ast} (Z^{k} f_{j}) = (- i)^{k} Z^{k} (\psi^{\ast} f_j) = \frac{i^{n - k}}{\sqrt{n N}} \sum_{l = 0}^{n N - 1} \exp \left( \frac{2 \pi i}{n N} j l \right) Z^{k} f_{l}(z).
    \end{equation}
    This formula gives that
    \begin{equation}
        \chi_{\rho}(\psi) = \frac{i^{n - k}}{\sqrt{n N}} \sum_{j = 0}^{n N - 1} \exp \left( \frac{2 \pi i}{n N} j^{2} \right) =
        \begin{cases}
            i^{n - k} (1 + i) & \text{if $n N \equiv 0$}, \\
            0 & \text{if $n N \equiv 2$}.
        \end{cases}
    \end{equation}
    Here, we use a result on the quadratic Gauss sum; see \cite{Lang1994ANT}*{Chapter IV.3} for example.
    Remark that, since $N$ is even, $n N \equiv 0$ or $2$.
    Note that
    \begin{equation}
        \Real (i^{n - k} (1 + i)) = \Real (e^{\pi (n - k) i / 2} (1 + i)) = \cos \frac{\pi (n - k)}{2} - \sin \frac{\pi (n - k)}{2}.
    \end{equation}
    Therefore we have 
    \begin{equation}
        \dim \mathscr{H}_{n, k}^{\psi} = 
        \begin{cases}
            \frac{1}{4} \left( n N + 2 (- 1)^{n + k} + 2 \cos \frac{\pi (n - k)}{2} - 2 \sin \frac{\pi (n - k)}{2} \right) & \text{if $n N \equiv 0$}, \\
            \frac{1}{4} (n N + 2 (- 1)^{n + k}) & \text{if $n N \equiv 2$}.
        \end{cases}
    \end{equation}

    In the case $n < 0$, we have the orthogonal decomposition $\mathscr{H}_{n}^{\psi} = \bigoplus_{k \geq 0} \overline{\mathscr{H}_{- n, k}^{\psi}}$ and $\overline{\mathscr{H}_{- n, k}^{\psi}}$ is the joint eigenspace corresponding to the eigenvalue $\sigma_{n, k}$ of $\Delta_{b}$ and $i^{- 1} T$ on $M$.
    In particular,
    \begin{align}
        \dim \overline{\mathscr{H}_{- n, k}^{\psi}} &= \dim \mathscr{H}_{- n, k}^{\psi} \\
        &=
        \begin{cases}
            \frac{1}{4} \left( \abs{n} N + 2 (- 1)^{n + k} + 2 \cos \frac{\pi (\abs{n} - k)}{2} - 2 \sin \frac{\pi (\abs{n} - k)}{2} \right) & \text{if $\abs{n} N \equiv 0$}, \\
            \frac{1}{4} (\abs{n} N + 2 (- 1)^{n + k}) & \text{if $\abs{n} N \equiv 2$},
        \end{cases}
    \end{align}
    which completes the proof.
\end{proof}

We next turn to the case of $\Gamma_{c}^{\prime}$.
Since $\psi^{\prime} = (0, c / 2) \psi$, we can use the computation in the case of $\Gamma_{c}$.
Note that this case was already considered by the first author~\cite{Suzuki2024preprint}*{Section 5}.

\begin{thm}
    The multiplicity $m_{n, k}^{\prime}$ of $\sigma_{n, k}$ in the joint spectrum of $\Delta_{b}$ and $i^{- 1} T$ on $M'$ is given by
    \begin{equation}
        m_{n, k}^{\prime} =
        \begin{cases}
            \frac{1}{4} \left( \abs{n} N + 2 (- 1)^{n + k} + 2 \cos \frac{\pi (\abs{n} + k)}{2} + 2 \sin \frac{\pi (\abs{n} + k)}{2} \right) & \text{if $\abs{n} N \equiv 0$}, \\
            \frac{1}{4} (\abs{n} N + 2 (- 1)^{n + k}) & \text{if $\abs{n} N \equiv 2$}.
        \end{cases}
    \end{equation}
    The multiplicity of $(\kappa, 0)$ in the joint spectrum is equal to $\# E_{\kappa} / 4$ for any $\kappa > 0$.
\end{thm}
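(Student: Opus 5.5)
The plan is to reduce everything to the computation already carried out for $\Gamma_{c}$, using the relation $\psi^{\prime} = (0, c / 2) \psi$. As before, functions on $M^{\prime}$ are identified with $\psi^{\prime}$-invariant functions on $M_{i, c}$. We have $L^{2}(M^{\prime}) = \bigoplus_{n} \mathscr{H}_{n}^{\psi^{\prime}}$, and for $n > 0$ the joint eigenspace of $\sigma_{n, k}$ is $\mathscr{H}_{n, k}^{\psi^{\prime}} \coloneqq \mathscr{H}_{n, k} \cap \mathscr{H}_{n}^{\psi^{\prime}}$. We then apply the Schur orthogonality relation to the pullback representation $\rho^{\prime}$ of $\Gamma_{c}^{\prime} / \Lambda_{i, c} \cong \langle \psi^{\prime} \rangle \cong \bbZ / 4 \bbZ$ on $\mathscr{H}_{n, k}$. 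This gives
\begin{equation}
    \dim \mathscr{H}_{n, k}^{\psi^{\prime}} = \frac{1}{4} \bigl( n N + 2 \Real \chi_{\rho^{\prime}}(\psi^{\prime}) + \chi_{\rho^{\prime}}((\psi^{\prime})^{2}) \bigr),
\end{equation}
where we use $\chi_{\rho^{\prime}}((\psi^{\prime})^{3}) = \overline{\chi_{\rho^{\prime}}(\psi^{\prime})}$, which holds because $\rho^{\prime}$ is unitary.

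\textbf{Key step.} We relate $\psi^{\prime \ast}$ to $\psi^{\ast}$. Since $\psi^{\prime}(z, t) = (i z, t + 3 c / 4)$, we have $\psi^{\prime \ast} f = \psi^{\ast}\bigl((0, c / 2)^{\ast} f\bigr)$, where $(0, c / 2)^{\ast}$ is translation by $c/2$ in $t$. On $\mathscr{H}_{n}$ this translation acts as multiplication by $e^{2 \pi i n (c / 2) / c} = (- 1)^{n}$. It also commutes with $Z$, because $Z$ is left-invariant and $(0, c / 2)$ is central. Hence
\begin{equation}
    \psi^{\prime \ast} (Z^{k} f_{j}) = (- 1)^{n} \psi^{\ast} (Z^{k} f_{j}).
\end{equation}
From the trace computed in the preceding proof (via the quadratic Gauss sum) we obtain $\chi_{\rho^{\prime}}(\psi^{\prime}) = (- 1)^{n} \chi_{\rho}(\psi)$. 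This equals $i^{- n - k} (1 + i)$ if $n N \equiv 0$ and $0$ if $n N \equiv 2$. With $a = \pi (n + k) / 2$,
\begin{equation}
    \Real \bigl( e^{- i a} (1 + i) \bigr) = \cos a + \sin a,
\end{equation}
which produces exactly the stated cosine and sine terms.

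For the square, $(\psi^{\prime})^{2} = (0, c / 2) \psi (0, c / 2) \psi = (0, c) \psi^{2}$, because $\mu(i)$ fixes central elements. Since $(0, c) \in \Lambda_{i, c}$ acts trivially, $\chi_{\rho^{\prime}}((\psi^{\prime})^{2}) = \chi_{\rho}(\varphi) = 2 (- 1)^{n + k}$, as in Theorem~\ref{thm:joint spectrum of order 2}. Substituting into the formula above gives $m^{\prime}_{n, k}$ for $n > 0$. For $n < 0$ we take complex conjugates, $\mathscr{H}_{n}^{\psi^{\prime}} = \bigoplus_{k} \overline{\mathscr{H}_{- n, k}^{\psi^{\prime}}}$, exactly as before, so the dimension is obtained with $\abs{n}$ in place of $n$; the sign $(- 1)^{n + k}$ is unchanged.

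For $n = 0$, the central translation acts trivially on $\mathscr{H}_{0}$, so $\psi^{\prime \ast} \chi_{\zeta} = \chi_{- i \zeta}$. For $\zeta \neq 0$ the orbits $\{\zeta, - i \zeta, - \zeta, i \zeta\}$ have exactly four elements, and each orbit contributes one invariant function, so the multiplicity of $(\kappa, 0)$ is $\# E_{\kappa} / 4$. The only delicate point is bookkeeping: tracking the sign $(- 1)^{n}$ through the central translation and checking that $(\psi^{\prime})^{2}$ differs from $\psi^{2}$ by an element of $\Lambda_{i, c}$. All the analytic input, namely the Poisson summation and the Gauss sum, is inherited from the case of $\Gamma_{c}$.
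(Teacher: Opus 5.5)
Your proposal is correct and follows essentially the same route as the paper: use $\psi^{\prime} = (0, c/2)\psi$ to get $\chi_{\rho^{\prime}}(\psi^{\prime}) = (-1)^{n}\chi_{\rho}(\psi)$ and $\chi_{\rho^{\prime}}((\psi^{\prime})^{2}) = \chi_{\rho}(\psi^{2}) = 2(-1)^{n+k}$, then take real parts in the Schur orthogonality formula. Your explicit check that $(\psi^{\prime})^{2} = (0, c)\psi^{2}$ with $(0, c) \in \Lambda_{i, c}$, and your orbit count for $n = 0$, spell out steps the paper leaves implicit.
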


\begin{proof}
    Any function on $M^{\prime}$ can be identified with a $\psi^{\prime}$-invariant function on $M_{i, c}$.
    Denote by $\mathscr{H}_{n}^{\psi^{\prime}}$ the space of $\psi^{\prime}$-invariant functions in $\mathscr{H}_{n}$.
    Then $L^{2}(M^{\prime}) = \bigoplus_{n \in\bbZ} \mathscr{H}_{n}^{\psi^{\prime}}$.
    The proof for $n = 0$ is the same as that in the case of $M$.
    
    Assume that $n > 0$.
    Set $\mathscr{H}_{n, k}^{\psi^{\prime}} \coloneqq \mathscr{H}_{n, k} \cap \mathscr{H}_{n}^{\psi^{\prime}}$, which is the space of $\psi^{\prime}$-invariant functions in $\mathscr{H}_{n, k}$.
    Note that $\mathscr{H}_{n}^{\psi^{\prime}} = \bigoplus_{k \in \bbZ_{\geq 0}} \mathscr{H}_{n, k}^{\psi^{\prime}}$ and
    $\mathscr{H}_{n, k}^{\psi^{\prime}}$ is the joint eigenspace corresponding to the eigenvalue $\sigma_{n, k}$ of $\Delta_{b}$ and $i^{- 1} T$ on $M^{\prime}$.
    To compute the dimension of $\mathscr{H}_{n, k}^{\psi^{\prime}}$,
    we consider the representation $\rho^{\prime}$ of $\Gamma_{c}^{\prime} / \Lambda_{i, c} \cong \langle \psi^{\prime} \rangle \cong \bbZ / 4 \bbZ$ on $\mathscr{H}_{n, k}$ defined by the pullback.
    The Schur orthogonality relation implies that
    \begin{equation}
        \dim \mathscr{H}_{n, k}^{\psi^{\prime}} = \frac{1}{4} \sum_{l = 0}^{3} \chi_{\rho^{\prime}}((\psi^{\prime})^{l}) = \frac{1}{4} (n N + 2 \Real \chi_{\rho^{\prime}}(\psi^{\prime}) + \chi_{\rho^{\prime}}((\psi^{\prime})^{2}).
    \end{equation}
    Since $\psi^{\prime} = (0, c / 2) \psi$,
    \begin{equation}
        ((\psi^{\prime})^{\ast} f)(z, t) = f((0, c / 2) \psi(z, t)) = (- 1)^{n} (\psi^{\ast} f)(z, t)
    \end{equation}
    for any $f \in \mathscr{H}_{n}$.
    Thus we have
    \begin{equation}
        \chi_{\rho^{\prime}}(\psi^{\prime}) = (- 1)^{n} \chi_{\rho}(\psi) =
        \begin{cases}
            (- i)^{n + k} (1 + i) & \text{if $n N \equiv 0$}, \\
            0 & \text{if $n N \equiv 2$},
        \end{cases}
    \end{equation}
    and $\chi_{\rho^{\prime}}((\psi^{\prime})^{2}) = \chi_{\rho}(\psi^{2}) = 2 (- 1)^{n + k}$.
    Note that
    \begin{equation}
        \Real ((- i)^{n + k} (1 + i)) = \Real (e^{- \pi (n + k) i / 2} (1 + i)) = \cos \frac{\pi (n + k)}{2} + \sin \frac{\pi (n + k)}{2}.
    \end{equation}
    Therefore we have 
    \begin{equation}
        \dim \mathscr{H}_{n, k}^{\psi^{\prime}} = 
        \begin{cases}
            \frac{1}{4} \left( n N + 2 (- 1)^{n + k} + 2 \cos \frac{\pi (n + k)}{2} + 2 \sin \frac{\pi (n + k)}{2} \right) & \text{if $n N \equiv 0$}, \\
            \frac{1}{4} (n N + 2 (- 1)^{n + k}) & \text{if $n N \equiv 2$}.
        \end{cases}
    \end{equation}

    In the case $n < 0$, we have the orthogonal decomposition $\mathscr{H}_{n}^{\psi^{\prime}} = \bigoplus_{k \geq 0} \overline{\mathscr{H}_{- n, k}^{\psi^{\prime}}}$ and $\overline{\mathscr{H}_{- n, k}^{\psi^{\prime}}}$ is the joint eigenspace corresponding to the eigenvalue $\sigma_{n, k}$ of $\Delta_{b}$ and $i^{- 1} T$ on $M^{\prime}$.
    In particular,
    \begin{align}
        \dim \overline{\mathscr{H}_{- n, k}^{\psi^{\prime}}} &= \dim \mathscr{H}_{- n, k}^{\psi^{\prime}} \\
        &=
        \begin{cases}
            \frac{1}{4} \left( \abs{n} N + 2 (- 1)^{n + k} + 2 \cos \frac{\pi (\abs{n} + k)}{2} + 2 \sin \frac{\pi (\abs{n} + k)}{2} \right) & \text{if $\abs{n} N \equiv 0$}, \\
            \frac{1}{4} (\abs{n} N + 2 (- 1)^{n + k}) & \text{if $\abs{n} N \equiv 2$},
        \end{cases}
    \end{align}
    which completes the proof.
\end{proof}

\begin{rem}
\label{rem:non conjugate of order 4}
    Consider the case $N \equiv 0$.
    Suppose to the contrary that $\Gamma_{c}$ is conjugate in $\bbH \rtimes \bbC^{\ast}$ to $\Gamma_{c}^{\prime}$.
    Then $(M, T^{1, 0} M, r \theta_{\bbH})$ is isomorphic to $(M^{\prime}, T^{1, 0} M^{\prime}, r^{\prime} \theta_{\bbH})$ for some $r^{\prime} \in \bbR_{> 0}$ as a pseudo-Hermitian manifold.
    Considering the spectrum of $i^{- 1} T$ yields that $r = r^{\prime}$,
    and so $m_{n, k}$ must be equal to $m_{n, k}^{\prime}$ for every $n$ and $k$.
    However, we can see that
    \begin{equation}
        m_{1, 1} = \frac{N}{4} + 1 \neq \frac{N}{4} = m_{1, 1}^{\prime},
    \end{equation}
    which is a contradiction.
    Therefore $\Gamma_{c}$ is not conjugate to $\Gamma_{c}^{\prime}$ in $\bbH \rtimes \bbC^{\ast}$.
\end{rem}

\section{Joint spectrum on Heisenberg Bieberbach manifolds $\Gamma \backslash \bbH$ with $\proj_{2}(\Gamma) = \langle e^{2 \pi i / 3} \rangle$}

To simplify notation, we write $\omega$ for $e^{2 \pi i / 3}$.
In this section, $\equiv$ is understood modulo $3$.
Let $\Gamma$ be a Heisenberg Bieberbach group with $\proj_{2}(\Gamma) = \langle \omega \rangle$ and consider the Heisenberg Bieberbach manifold $\Gamma \backslash \bbH$.
Recall that $N = \# (Z(\Lambda) / [\Lambda, \Lambda])$, where $\Lambda = \Gamma \cap \bbH$.
We first give a classification of such $\Gamma$ up to conjugacy in $\bbH \rtimes \bbC^{\ast}$.

\begin{prop}
\label{prop:HB group of order 3}
    Let $\Gamma$ be a Heisenberg Bieberbach group satisfying $\proj_{2}(\Gamma) = \langle \omega \rangle$ and set $c = 4 \Imaginary \omega / N$.
    The conjugacy class of $\Gamma$ in $\bbH \rtimes \bbC^{\ast}$ depends on $N$ modulo $3$ as follows.
    If $N \equiv 0$, then it is conjugate to either
    \begin{equation}
        \Gamma_{c} \coloneqq \langle \Lambda_{\omega, c}, \varphi \coloneqq (\omega / 2, c / 3 + \Imaginary \omega / 6) \mu(\omega) \rangle
    \end{equation}
    or
    \begin{equation}
         \Gamma_{c}^{\prime} \coloneqq \langle \Lambda_{\omega, c}, \varphi^{\prime} \coloneqq (\omega / 2, 2 c / 3 + \Imaginary \omega / 6) \mu(\omega) \rangle.
    \end{equation}
    If $N \equiv 1$, then it is conjugate to $\Gamma_{c}^{\prime}$.
    If $N \equiv 2$, then it is conjugate to $\Gamma_{c}$.
\end{prop}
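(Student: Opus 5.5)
We follow the pattern of Propositions~\ref{prop:HB group of order 2} and~\ref{prop:HB group of order 4}. First we normalize the lattice. By Proposition~\ref{prop:lattice} we may assume $\Gamma \cap \bbH = \Lambda_{\tau, c}$. Since $L_{\tau}$ is invariant under multiplication by $\omega$, it is a hexagonal lattice. Composing with a further $\mu(\lambda)$ and re-running the normalization of Proposition~\ref{prop:lattice} with the basis $1, \omega$, we may assume $\tau = \omega$, so that $c = 4 \Imaginary \omega / N$.

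Next we constrain the rotational generator. Take $\gamma = (\zeta, \sigma) \mu(\omega) \in \Gamma$, so that $\Gamma = \langle \Lambda_{\omega, c}, \gamma \rangle$. We compute $\gamma (1, 0) \gamma^{-1}$ and $\gamma (\omega, 0) \gamma^{-1}$. These have $\bbC$-components $\omega$ and $\omega^{2} = -1-\omega$. The one subtlety compared with the order-$4$ case is that $(-1-\omega, 0) \notin \Lambda_{\omega, c}$ in general: one has $(-1, 0)(-\omega, 0) = (-1-\omega, 2 \Imaginary \omega)$. The membership conditions therefore read
\begin{equation}
    4 \Imaginary(\text{linear in } \zeta) \in c \bbZ, \qquad 4 \Imaginary(\text{linear in } \zeta) + 2 \Imaginary \omega \in c \bbZ.
\end{equation}
Solving these, $\zeta$ lies in a coset $\zeta_{0} + \frac{1}{N} L_{\omega}$, where the shift $\zeta_{0}$ (essentially $\omega/2$) comes from the $2 \Imaginary \omega$ term. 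We also compute
\begin{equation}
    \gamma^{3} = (0, 3 \sigma - Q(\zeta)) \in \Lambda_{\omega, c}
\end{equation}
for an explicit real quadratic expression $Q(\zeta)$. This gives $q \coloneqq (3 \sigma - Q(\zeta)) / c \in \bbZ$.

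Then we normalize the translation part. Conjugate by $h = (\zeta', 0)$ with $\zeta' \in \frac{1}{N} L_{\omega}$; by Proposition~\ref{prop:conjugate} this preserves $\Lambda_{\omega, c}$. Then $h \gamma h^{-1} = (\zeta + (1-\omega) \zeta', \sigma') \mu(\omega)$. Since $(1-\omega) L_{\omega}$ has index $3$ in $L_{\omega}$, we can bring the translation part to $\omega/2$ only modulo this sublattice, not all the way to a single representative. We handle the three classes of $\zeta$ modulo $\frac{1-\omega}{N} L_{\omega}$ by multiplying $\gamma$ by elements of $\Lambda_{\omega, c}$, which shifts $\zeta$ by $L_{\omega}$. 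When $3 \nmid N$, $L_{\omega}$ surjects onto $\frac1N L_\omega / \frac{1-\omega}{N}L_\omega$, so a single representative $\omega/2$ suffices. When $3 \mid N$ the remaining classes must be shown to give torsion, or to be absorbed. After this step the generator has the form $(\omega/2, \sigma) \mu(\omega)$, and the central part $\sigma$ is determined modulo $c$ by $q$ modulo $3$. Because $\varphi^{3}$ and $(\varphi')^{3}$ have central parts differing by $c$, the normal form is $\sigma = jc/3 + \Imaginary \omega / 6$ with $j \in \{0, 1, 2\}$.

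Finally we impose torsion-freeness, which we expect to be the main obstacle. Elements with $\proj_{2} = \omega^{2}$ are inverses of elements with $\proj_{2} = \omega$, so it suffices to check the latter. For $\lambda = (m + n\omega, 2mn \Imaginary \omega + pc) \in \Lambda_{\omega, c}$ we compute $(\lambda \gamma)^{3} = (0, c(3p + 3j' + F(m, n)))$. Here $F$ is an explicit integer-valued quadratic polynomial in $m, n$ whose coefficients involve $N$ (through $2 \Imaginary \omega = Nc/2$). Torsion-freeness amounts to the residues of $F(m, n)$ modulo $3$ never hitting $-j$ modulo $3$. Plan: reduce $F$ modulo $3$, which should look like $N(m^{2} - mn + n^{2}) + (\text{linear})$ plus a constant. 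Then read off the cases. For $j = 0$, $F$ takes the value $0$, so the group has torsion. If $N \equiv 0$, both $j = 1$ and $j = 2$ survive, giving $\Gamma_{c}$ and $\Gamma_{c}'$. If $N \equiv 1$ or $2$, the norm form $m^{2} - mn + n^{2}$ takes only the values $0, 1$ modulo $3$, and this excludes exactly one of $j = 1, 2$. This yields $\Gamma_{c}'$ when $N \equiv 1$ and $\Gamma_{c}$ when $N \equiv 2$. Getting the constant and linear terms of $F$ right, including the $\Imaginary \omega / 6$ offset, is the delicate bookkeeping, and we would verify it first on $\lambda = 0$ and $\lambda = (1, 0)$.
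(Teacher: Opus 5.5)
Your plan follows the paper's route: normalize to $\tau=\omega$, get $\zeta\in\omega/2+\frac1N L_\omega$, conjugate by $h=(\zeta',0)$ with $\zeta'\in\frac1N L_\omega$, and read off torsion-freeness modulo $3$. However, it leaves the case $3\mid N$ open. Write $\zeta=\omega/2+(k+l\omega)/N$. The residue $k+l \bmod 3$ is unchanged by your conjugations, which shift $\zeta$ by $(1-\omega)\zeta'$. When $3\mid N$ it is also unchanged by multiplying $\gamma$ with elements of $\Lambda_{\omega,c}$. So the two classes with $k+l\not\equiv 0$ are never brought to $\omega/2$, and you only say they ``must be shown to give torsion, or to be absorbed.'' Because you compute $F$ only for the generator $(\omega/2,\sigma)\mu(\omega)$, your torsion analysis cannot decide this.

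The missing step is to run the torsion computation for general $\zeta$ \emph{before} normalizing, as the paper does. Take $\lambda=(m+n\omega,2mn\Imaginary\omega+pc)$ and $q=(3\sigma-2\abs{\zeta}^2\Imaginary\omega)/c$. Then the central part of $(\lambda\gamma)^3$, divided by $c$, is congruent modulo $3$ to
\begin{equation}
  q-(m+n)(k+l)-\tfrac12(m+n)(m+n+1)N .
\end{equation}
Suppose $N\equiv 0$ and $k+l\not\equiv 0$. Letting $m+n$ run through $0,1,2$ gives $q$, $q-(k+l)$, $q-2(k+l)$, which exhaust $\bbZ/3$, so $\Gamma$ has torsion. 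Hence $k+l\equiv 0$ for every $N$; this is exactly what puts $\zeta'=-(\zeta-\omega/2)/(1-\omega)$ in $\frac1N L_\omega$. The torsion-free condition then becomes $q\not\equiv 0,N$. (A minor slip: $(-1,0)(-\omega,0)=(-1-\omega,-2\Imaginary\omega)$, but this is harmless since $4\Imaginary\omega=Nc$.)

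Second, for $N\not\equiv 0$ you assert the assignment rather than derive it, and the heuristic you give points the wrong way. With $\zeta=\omega/2$ and $\sigma=jc/3+\Imaginary\omega/6$, one gets $F\equiv N(m+n)(m+n+1)\pmod 3$. This is the norm form $N(m^2-mn+n^2)$ plus the linear term $N(m+n)$, and its values are $\{0,2N\}$. Hence torsion-freeness is $j\not\equiv 0,N$, which gives $\Gamma_c'$ for $N\equiv 1$ and $\Gamma_c$ for $N\equiv 2$. The norm form alone, with values $\{0,N\}$, would give the opposite assignment. So the linear term is not mere bookkeeping: it decides the case split, and this computation still has to be carried out.
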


\begin{proof}
    Without loss of generality, we may assume that $\Gamma \cap \bbH = \Lambda_{\tau, c}$.
    In addition, we can assume that $\tau = \omega$ since $L_{\tau}$ is invariant under the multiplication by $\omega$.
    Take $g = (\zeta, \sigma) \in \bbH$ such that $\gamma \coloneqq g \mu(\omega) \in \Gamma$.
    Note that $\Gamma = \langle \Lambda_{\omega, c}, \gamma \rangle$.
    We can see that $q \coloneqq (3 \sigma - 2  \abs{\zeta}^{2} \Imaginary \omega) / c \in \bbZ$ since
    \begin{equation}
        \gamma^{3} = (\zeta, \sigma) (\omega \zeta, \sigma) (\omega^{2} \zeta, \sigma) \mu(\omega^{3}) = (0, 3 \sigma - 2 \abs{\zeta}^{2} \Imaginary \omega) \in \Gamma \cap \bbH = \Lambda_{\omega, c}.
    \end{equation}
    Moreover, it follows from $\gamma \Lambda_{\omega, c} \gamma^{-1} = \Lambda_{\omega, c}$ that
    \begin{align}
        \gamma (1, 0) \gamma^{- 1} &= (\omega, - 4 \Imaginary (\omega \bar{\zeta})) = (\omega, 0) (0, - 4 \Imaginary (\omega \bar{\zeta})) \in \Lambda_{\omega, c}, \\
        \gamma (\omega, 0) \gamma^{- 1} &= (\omega^{2}, - 4 \Imaginary (\omega^{2} \bar{\zeta})) = (- 1, 0) (- \omega, 0) (0, - 4 \Imaginary (\omega^{2} \bar{\zeta}) + 2 \Imaginary \omega) \in \Lambda_{\omega, c},
    \end{align}
    which implies $4 \Imaginary (\omega \bar{\zeta}), 4 \Imaginary (\omega^{2} \bar{\zeta}) - 2 \Imaginary \omega \in c \bbZ$.
    Hence there exist $k, l \in \bbZ$ such that
    \begin{equation}
        \zeta = \frac{c}{4 \Imaginary \omega} (k + l \omega) + \frac{1}{2} \omega = \frac{k + l \omega}{N} + \frac{1}{2} \omega.
    \end{equation}
    If $N \not\equiv 0$, then by translation we may assume that $k + l \equiv 0$.
    It remains to determine the condition under which $\Gamma$ is torsion-free.
    Any element of $\Lambda_{\omega, c} \setminus \{(0, 0)\}$ has infinite order.
    Moreover, if an element $\gamma'$ in $\Gamma \cap \proj_{2}^{-1}(\omega^{2})$ is a torsion element, then so is $(\gamma')^{2} \in \Gamma \cap \proj_{2}^{-1}(\omega)$.
    Hence it suffices to check the condition that every element in $\Gamma \cap \proj_{2}^{-1}(\omega)$ is of infinite order.
    Any element $\gamma' \in \Gamma \cap \proj_{2}^{-1}(\omega)$ is of the form
    \begin{equation}
        \gamma' = (m + n \omega, 2 m n \Imaginary \omega + p c) \gamma = (\zeta + m + n \omega, \sigma + m n N c / 2 + 2 \Imaginary ((m + n \omega) \bar{\zeta}) + p c) \mu(\omega)
    \end{equation}
    for some $m, n, p \in \bbZ$.
    This  $\gamma'$ is not torsion if and only if
    \begin{equation}
        (\gamma')^{3} = (0, 3 \sigma + 3 m n N c / 2 + 6 \Imaginary ((m + n \omega) \bar{\zeta}) + 3 p c - 2 \abs{\zeta + m + n \omega}^{2} \Imaginary \omega) \neq (0, 0).
    \end{equation}
    Since $p \in \bbZ$ is arbitrary,
    $\Gamma$ is torsion-free if and only if
    \begin{align}
        &\frac{3 \sigma}{c} + \frac{3 m n N}{2} + \frac{6}{c} \Imaginary ((m + n \omega) \bar{\zeta}) - \frac{2 \Imaginary \omega}{c} \abs{\zeta + m + n \omega}^{2} \\
        &=\frac{3 \sigma}{c} + \frac{3 m n N}{2} + \frac{6}{c} \Imaginary ((m + n \omega) \bar{\zeta}) - \frac{2 \Imaginary \omega}{c} 
            (\abs{\zeta}^2 + 2 \Real ((m + n \omega) \bar{\zeta}) + \abs{m + n \omega}^2
            )\\
        &= q + \frac{3 m n N}{2} + \frac{6}{c} \Imaginary ((m + n \omega) \bar{\zeta}) - N \Real ((m + n \omega) \bar{\zeta}) - \frac{N}{2} \abs{m + n \omega}^{2}
    \end{align}
    is not congruent to $0$ modulo $3$ for any $m, n \in \bbZ$.
    A computation yields
    \begin{gather}
        \Real ((m + n \omega) \bar{\zeta}) = \frac{1}{N} \left(m k - \frac{1}{2} m l - \frac{1}{2} n k + n l \right) - \frac{1}{4} m + \frac{1}{2}{n}, \\
        \Imaginary ((m + n \omega) \bar{\zeta}) = \frac{c}{4} (- m l + n k) - \frac{c}{8} m N;
    \end{gather}
    here we use the fact that $c N = 4 \Imaginary \omega$.
    Hence
    \begin{align}
        &\frac{3 \sigma}{c} + \frac{3 m n N}{2} + \frac{6}{c} \Imaginary ((m + n \omega) \bar{\zeta}) - \frac{2 \Imaginary \omega}{c} \abs{\zeta + m + n \omega}^{2} \\
        &= q - m k - m l + 2 n k - n l + 2 m n N - \frac{1}{2} m (m + 1) N - \frac{1}{2} n (n + 1) N \\
        &\equiv q - (m + n) (k + l) - \frac{1}{2} (m + n) (m + n + 1) N \\
        &\equiv
        \begin{cases}
            q & \text{if $m + n \equiv 0$}, \\
            q - (k + l + N) & \text{if $m + n \equiv 1$}, \\
            q - 2 (k + l) & \text{if $m + n \equiv 2$}.
        \end{cases}
    \end{align}
    In the case $N \equiv 0$, this is not congruent to $0$ modulo $3$ only if $k + l \equiv 0$.
    Hence we may assume that $k + l \equiv 0$ for any $N$,
    and in this case, $\Gamma$ is torsion-free if and only if $q \not\equiv 0, N$.
    Set
    \begin{equation}
        \zeta' \coloneqq - \frac{\zeta - \omega / 2}{1 - \omega} = - \frac{2 k - l + (k + l) \omega}{3 N}.
    \end{equation}
    It follows from Proposition~\ref{prop:conjugate} and $k + l \equiv 0$ that $h \coloneqq (\zeta', 0) \in \bbH$ satisfies $h \Lambda_{\omega, c} h^{-1} = \Lambda_{\omega, c}$.
    Moreover, we have
    \begin{equation}
        h \gamma h^{-1} = (\zeta', 0) (\zeta, \sigma) (- \omega \zeta', 0) \mu(\omega) = (\omega / 2 , q c / 3 +  \Imaginary \omega / 6) \mu(\omega).
    \end{equation}
    Hence $\Gamma$ is conjugate in $\bbH \rtimes \bbC^{\ast}$ to
    \begin{equation}
        \langle \Lambda_{\omega, c}, (\omega / 2 , q c / 3 +  \Imaginary \omega / 6) \mu(\omega) \rangle
        =
        \begin{cases}
            \Gamma_{c} & \text{if $q \equiv 1$}, \\
            \Gamma_{c}^{\prime} & \text{if $q \equiv 2$},
        \end{cases}
    \end{equation}
    which completes the proof.
\end{proof}

This proposition implies that $\Gamma \backslash \bbH$ is isomorphic to either $(M \coloneqq \Gamma_{c} \backslash \bbH, T^{1, 0} M, r \theta_{\bbH})$ or $(M^{\prime} \coloneqq \Gamma_{c}^{\prime} \backslash \bbH, T^{1, 0} M^{\prime}, r \theta_{\bbH})$ for some $r > 0$ as a pseudo-Hermitian manifold.
Hence it suffices to study the joint spectrum of $\Delta_{b}$ and $i^{- 1} T$ on these pseudo-Hermitian manifolds.
We first consider $(M, T^{1, 0} M, r \theta_{\bbH})$.

\begin{thm}
    The multiplicity $m_{n, k}$ of $\sigma_{n, k}$ in the joint spectrum of $\Delta_{b}$ and $i^{- 1} T$ on $M$ is given by
    \begin{equation}
        m_{n, k} = \frac{1}{3} \left( \abs{n} N + \frac{4}{\sqrt{3}} \cos \left( \frac{2 \pi (\abs{n} - k)}{3} + \frac{\pi}{6} \right) + \frac{2}{\sqrt{3}} \cos \left( \frac{2 \pi (\abs{n} - k - \abs{n} N)}{3} + \frac{\pi}{6} \right) \right).
    \end{equation}
    The multiplicity of $(\kappa, 0)$ in the joint spectrum is equal to $\# E_{\kappa} / 3$ for any $\kappa > 0$.
\end{thm}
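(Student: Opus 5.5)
We follow the same scheme as in the order-$2$ and order-$4$ cases. Functions on $M$ are identified with $\varphi$-invariant functions on $M_{\omega, c}$, so $L^{2}(M) = \bigoplus_{n} \mathscr{H}_{n}^{\varphi}$.

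\textbf{The case $n = 0$.} We have $\varphi^{\ast} \chi_{\zeta} = e^{2 \pi i \Real(\zeta \overline{\omega/2})} \chi_{\bar{\omega} \zeta}$ (up to the sign convention on the rotation). For $\zeta \neq 0$ the orbit of $\chi_{\zeta}$ under $\langle \varphi \rangle$ therefore consists of three distinct lines $\bbC\chi_{\zeta}, \bbC\chi_{\omega\zeta}, \bbC\chi_{\omega^2\zeta}$. Since $\varphi^{3} \in \Lambda_{\omega,c}$ acts trivially, each orbit contributes exactly one invariant vector. This gives multiplicity $\# E_{\kappa}/3$ for $\kappa > 0$.

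\textbf{The case $n > 0$.} Let $\rho$ be the pullback representation of $\langle \varphi \rangle \cong \bbZ/3\bbZ$ on $\mathscr{H}_{n,k}$. The Schur orthogonality relation gives
\begin{equation}
  \dim \mathscr{H}_{n,k}^{\varphi} = \frac{1}{3}\bigl(nN + \chi_{\rho}(\varphi) + \chi_{\rho}(\varphi^{2})\bigr),
\end{equation}
so it suffices to compute two traces. The commutation rule is $\varphi^{\ast}(Zf) = \bar{\omega}\, Z(\varphi^{\ast} f)$ (to be checked), which reduces the problem to $k=0$ at the cost of a factor $\bar{\omega}^{k}$.

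\textbf{Computing $\varphi^{\ast} f_{j}$.} We compute
\begin{equation}
  (\varphi^{\ast} f_{j})(z,t) = f_{j}\bigl(\omega/2 + \omega z,\; t + c/3 + \Imaginary\omega/6 + 2\Imaginary(\tfrac{\omega}{2}\overline{\omega z})\bigr).
\end{equation}
Factoring out the Gaussian prefactor leaves a holomorphic function of the form $e^{Q(z)} h_{j}(\omega z + \omega/2)$, where $Q$ is quadratic and the exponent includes the factor $e^{2\pi i n/3}$ coming from the $t$-shift $c/3$. As in the order-$4$ case, we complete the square in $m$ and apply Poisson summation, using a Gaussian with complex parameter $\omega$ in place of $i$. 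This should give
\begin{equation}
  \varphi^{\ast} f_{j} = \sum_{l} A_{jl} f_{l},
\end{equation}
where $A_{jl}$ is a constant (involving $(nN)^{-1/2}$ and an eighth root of unity from $\sqrt{-i\omega}$-type factors) times $\exp\bigl(\pi i (a j^{2} + b jl + d l^{2})/(nN)\bigr)$. The half-period shift $\omega/2$ contributes extra linear phases such as $(-1)^{l}$ or $e^{\pi i nN(\cdot)}$.

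\textbf{Traces as Gauss sums.} The trace $\chi_{\rho}(\varphi) = \bar{\omega}^{k} \sum_{j} A_{jj}$ is then a generalized quadratic Gauss sum $\sum_{j \bmod nN} e^{\pi i (\alpha j^{2} + \beta j)/(nN)}$. We evaluate it by the reciprocity formula for Gauss sums (Landsberg--Schaar/Lang), expecting the answer $\frac{2}{\sqrt{3}} e^{\pm\pi i/6}$ times roots of unity depending on $n$ and $k$. The trace at $\varphi^{2}$ is handled in the same way, or via $\chi_{\rho}(\varphi^{2}) = \chi_{\rho}(\varphi^{-1}) = \overline{\chi_{\rho}(\varphi)}$, since $\rho$ is unitary and $\varphi^{3}$ acts trivially.

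The target formula, however, has two distinct cosine terms: one with argument $n-k$ and one with argument $n-k-nN$. This suggests that it is cleaner to compute $\chi_\rho(\varphi)$ and $\chi_\rho(\varphi^2)$ separately and then combine them by taking real parts, with $\chi_\rho(\varphi)+\chi_\rho(\varphi^2)=2\Real\chi_\rho(\varphi)$. The split into weights $4/\sqrt3$ and $2/\sqrt3$ would arise from the Gauss sum value depending on $nN \bmod 3$, entering through a phase $\omega^{-nN}$. Matching this bookkeeping to the stated closed form, uniformly in $N \bmod 3$, is part of the check.

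\textbf{The case $n<0$.} This follows by complex conjugation exactly as before, replacing $n$ by $\abs{n}$.

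\textbf{Main obstacle.} The hardest step is the Poisson summation identity for $\varphi^{\ast} f_{j}$ and the evaluation of the resulting Gauss sum. The rotation by $\omega$ is not a symmetry of the chosen basis $\{1, \omega\}$ in as simple a way as $i$ was for $\{1, i\}$, and the translation by $\omega/2$ introduces half-integer shifts. I would first try to write the composite map as $z \mapsto \omega z$ followed by a lattice-normalizing translation, and verify the case $nN = 1$ numerically before doing the general computation.
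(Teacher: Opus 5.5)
Your outline follows the paper's route exactly: the pullback representation of $\langle \varphi \rangle \cong \bbZ/3\bbZ$ on $\mathscr{H}_{n,k}$, Schur orthogonality, Poisson summation for $\varphi^{\ast} f_{j}$, a reciprocity law for the resulting Gauss sum, and conjugation for $n<0$. The parts you actually carry out are correct: the $n=0$ orbit argument, $\varphi^{\ast}(Zf)=\bar{\omega}\,Z(\varphi^{\ast}f)$, and $\chi_{\rho}(\varphi^{2})=\overline{\chi_{\rho}(\varphi)}$.

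The gap is that you stop exactly where the content of the theorem lies. You never compute $\chi_{\rho}(\varphi)$, and what you say you expect is wrong. Since $\varphi^{3}=(0,c)\in\Lambda_{\omega,c}$, the eigenvalues of $\rho(\varphi)$ are cube roots of unity. Hence $\chi_{\rho}(\varphi)\in\bbZ[\omega]$ has integer squared modulus, so it cannot be $\frac{2}{\sqrt{3}}e^{\pm\pi i/6}$ times a root of unity (squared modulus $4/3$). In fact $\abs{\chi_{\rho}(\varphi)}$ is $\sqrt{3}$ if $3\mid nN$ and $1$ otherwise. Computing $\chi_{\rho}(\varphi^{2})$ separately also yields nothing new. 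The two cosine terms do not come from $\varphi$ and $\varphi^{2}$; they come from a two-term expression for $\chi_{\rho}(\varphi)$ itself. Your remark about a phase $\omega^{-nN}$ is the right instinct here.

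What is missing is the following computation. Completing the square, $\varphi^{\ast}f_{j}$ is $e^{2\pi i n/3+\pi i nN/12}$ times the prefactor $\exp(\frac{2\pi n}{c}(it-\abs{z}^{2}+z^{2}))$ times $\sum_{m}\exp(\pi i nN\omega(m+\frac{j}{nN}+z+\frac12)^{2})$. Poisson summation then gives
\[
\varphi^{\ast}f_{j}=\frac{e^{2\pi i n/3+\pi i(nN-1)/12}}{\sqrt{nN}}\sum_{l=0}^{nN-1}\exp\Bigl(\frac{\pi i}{nN}(l^{2}+2jl)+\pi i l\Bigr)f_{l}.
\]
Here $-i\omega=e^{\pi i/6}$, so the constant is $e^{-\pi i/12}$, a $24$th rather than an eighth root of unity. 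Therefore
\[
\chi_{\rho}(\varphi)=\bar{\omega}^{k}(nN)^{-1/2}e^{2\pi i n/3+\pi i(nN-1)/12}\sum_{j=0}^{nN-1}e^{\pi i(3j^{2}+nNj)/(nN)}.
\]
Reciprocity with $(a,b,c)=(3,nN,nN)$ turns this sum into
\[
\sqrt{nN/3}\,e^{\pi i(3-nN)/12}\sum_{l=0}^{2}e^{-\pi i nN(l^{2}+l)/3}=\sqrt{nN/3}\,e^{\pi i(3-nN)/12}(2+e^{-2\pi i nN/3}),
\]
whence
\[
\chi_{\rho}(\varphi)=\frac{2}{\sqrt{3}}e^{2\pi i(n-k)/3+\pi i/6}+\frac{1}{\sqrt{3}}e^{2\pi i(n-k-nN)/3+\pi i/6}.
\]
Then $\frac13(nN+2\Real\chi_{\rho}(\varphi))$ is the stated formula, uniformly in $N$ modulo $3$, with no case split needed. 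Without this step, or an equivalent evaluation, your text is a plan rather than a proof.
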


\begin{proof}
    Any function on $M$ can be identified with a $\varphi$-invariant function on $M_{\omega, c}$.
    Denote by $\mathscr{H}_{n}^{\varphi}$ the space of $\varphi$-invariant functions in $\mathscr{H}_{n}$.
    Then $L^{2}(M) = \bigoplus_{n \in\bbZ} \mathscr{H}_{n}^{\varphi}$.
    
    In the case $n = 0$, one has $\mathscr{H}_{0} = \bigoplus_{\zeta \in L_{\omega}^{\prime}} \bbC \chi_{\zeta}$
    and $\Delta_{b} \chi_{\zeta} = (\pi^{2} \abs{\zeta}^{2} / r) \chi_{\zeta}$.
    Since $\varphi^{\ast} \chi_{\zeta} = (- 1)^{\Real (\zeta \overline{\omega})} \chi_{\overline{\omega} \zeta}$,
    the multiplicity of the eigenvalue $(\kappa, 0)$ in the joint spectrum of $\Delta_{b}$ and $i^{- 1} T$ on $M$ is equal to $\# E_{\kappa} / 3$ for any $\kappa > 0$.

    Assume that $n > 0$.
    Set $\mathscr{H}_{n, k}^{\varphi} \coloneqq \mathscr{H}_{n, k} \cap \mathscr{H}_{n}^{\varphi}$, which is the space of $\varphi$-invariant functions in $\mathscr{H}_{n, k}$.
    Note that $\mathscr{H}_{n}^{\varphi} = \bigoplus_{k \in \bbZ_{\geq 0}} \mathscr{H}_{n, k}^{\varphi}$ and
    $\mathscr{H}_{n, k}^{\varphi}$ is the joint eigenspace corresponding to the eigenvalue $\sigma_{n, k}$ of $\Delta_{b}$ and $i^{- 1} T$ on $M$.
    To compute the dimension of $\mathscr{H}_{n, k}^{\varphi}$,
    we consider the representation $\rho$ of $\Gamma_{c} / \Lambda_{\omega, c} \cong \langle \varphi \rangle \cong \bbZ / 3 \bbZ$ on $\mathscr{H}_{n, k}$ defined by the pullback.
    The Schur orthogonality relation implies that
    \begin{equation}
        \dim \mathscr{H}_{n, k}^{\varphi} = \frac{1}{3} \sum_{l = 0}^{2} \chi_{\rho}(\varphi^{l}) = \frac{1}{3} (n N + 2 \Real \chi_{\rho}(\varphi)).
    \end{equation}
    Hence it suffices to compute the value $\chi_{\rho}(\varphi)$.
    
    For each $0 \leq j \leq n N - 1$, it follows from $N = 4 \Imaginary \omega / c$ that
    \begin{align}
        &(\varphi^{\ast} f_{j})(z, t) \\
        &= f_{j} ((\omega / 2, c / 3 + \Imaginary \omega / 6) (\omega z, t)) \\
        &= f_{j}(\omega z + \omega / 2, t + c / 3 + \Imaginary \omega / 6 - \Imaginary z) \\
        &= \exp \left( \frac{2 \pi n}{c} \left(i t + \frac{i c}{3} + \frac{i \Imaginary \omega}{6} - i \Imaginary z - \abs{\omega z + \omega / 2}^{2} + (\omega z + \omega / 2)^{2} \right) \right) h_{j}(\omega z + \omega / 2) \\
        &= e^{2 \pi i n / 3 + \pi i n N / 12} 
        \exp \left( \frac{2 \pi n}{c} \left(i t - \abs{z}^2 -z - \frac{1}{4} + \omega^{2} \left( z + \frac{1}{2} \right)^{2}
        \right) 
            \right) h_{j}(\omega z + \omega / 2) \\
        &= e^{2 \pi i n / 3 + \pi i n N / 12} 
        \exp \left( \frac{2 \pi n}{c} \left(i t - \abs{z}^2 + z^2 + \omega (\omega - \overline{\omega}) \left( z + \frac{1}{2} \right)^2 \right) \right) h_{j}(\omega z + \omega / 2) \\
        &= e^{2 \pi i n / 3 + \pi i n N / 12} \exp \left( \frac{2 \pi n}{c} (i t - \abs{z}^{2} + z^{2}) \right) \exp \left( \pi i n N \omega \left( z + \frac{1}{2} \right)^{2} \right) h_{j}(\omega z + \omega / 2).
    \end{align}
    Now we have
        \begin{equation}\label{h_j-omega}
            \exp \left( \pi i n N \omega \left( z + \frac{1}{2} \right)^{2} \right) h_{j}(\omega z + \omega / 2)
            = \sum_{m \in \bbZ} \exp \left(
                \pi i n N \omega \left( m + \frac{j}{nN} + z + \frac{1}{2} \right)^2
            \right).
        \end{equation}
    Consider $g(x) \coloneqq \exp(\pi i n N \omega (x + j / n N + z + 1 / 2)^{2}) \in \mathcal{S}(\bbR)$.
    Then the equation \eqref{h_j-omega} and the Poisson summation formula give
    \begin{equation}
        \exp \left( \pi i n N \omega \left( z + \frac{1}{2} \right)^{2} \right) h_{j}(\omega z + \omega / 2) = \sum_{m \in \bbZ} g(m) = \sum_{l = 0}^{n N - 1} \sum_{m^{\prime} \in \bbZ} \hat{g}(n N m^{\prime} + l),
    \end{equation}
    where $\hat{g}$ is the Fourier transform of $g$.
    A computation yields
    \begin{align}
        \hat{g}(\xi)
        &= \int_{\bbR} \exp \left( \pi i n N \omega \left( x + \frac{j}{n N} + z + \frac{1}{2} \right)^{2} - 2 \pi i x \xi \right) \, d x \\
        &= \frac{1}{\sqrt{n N}} e^{- \pi i / 12} \exp \left( - \frac{\pi i}{n N \omega} \xi^{2} + 2 \pi i \left( \frac{j}{n N} + z + \frac{1}{2} \right) \xi \right) \\
        &= \frac{1}{\sqrt{n N}} e^{- \pi i / 12} \exp \left( \frac{\pi i (\omega + 1)}{n N} \xi^{2} + 2 \pi i \left( \frac{j}{n N} + z + \frac{1}{2} \right) \xi \right),
    \end{align}
    which implies
    \begin{align}
        &\exp \left( \pi i n N \omega \left( z + \frac{1}{2} \right)^{2} \right) h_{j}(\omega z + \omega / 2) \\
        &= \frac{1}{\sqrt{n N}} e^{- \pi i / 12} \sum_{l = 0}^{n N - 1} \sum_{m^{\prime}} \exp \left( \frac{\pi i (\omega + 1)}{n N} (n N m^{\prime} + l)^{2} + 2 \pi i \left( \frac{j}{n N} + z + \frac{1}{2} \right) (n N m^{\prime} + l) \right) \\
        &= \frac{1}{\sqrt{n N}} e^{- \pi i / 12} \sum_{l = 0}^{n N - 1} \exp \left( \frac{\pi i}{n N} l^2 + \frac{2 \pi i}{n N} j l + \pi i l \right) h_{l}(z).
    \end{align}
    Hence we have
    \begin{align}
        (\varphi^{\ast} f_{j})(z, t)
        &= e^{2 \pi i n / 3 + \pi i n N / 12} \exp \left( \frac{2 \pi n}{c} (i t - \abs{z}^{2} + z^{2}) \right) \\
        &\quad \times \frac{1}{\sqrt{n N}} e^{- \pi i / 12} \sum_{l = 0}^{n N - 1} \exp \left( \frac{\pi i}{n N} j^2 + \frac{2 \pi i}{n N} j l + \pi i l \right) h_{l}(z) \\
        &= \frac{1}{\sqrt{n N}} e^{2 \pi i n / 3 + \pi i (nN - 1) / 12} \sum_{l = 0}^{n N - 1} \exp \left( \frac{\pi i}{n N} l^2 + \frac{2 \pi i}{n N} j l + \pi i l \right) f_{l}(z).
    \end{align}
    Moreover, one can see that
    \begin{equation}
        (\varphi^{\ast} (Z f))(z, t) = (Z f)(\omega z + \omega / 2, t + c / 3 + \Imaginary \omega / 6 - \Imaginary z) = e^{- 2 \pi i / 3} (Z (\varphi^{\ast} f))(z, t)
    \end{equation}
    for any $f \in C^{\infty}(\bbH)$.
    Thus we have
    \begin{align}
        \varphi^{\ast} (Z^{k} f_{j})
        &= e^{- 2 \pi i k / 3} Z^{k} (\varphi^{\ast} f_j) \\
        &= \frac{1}{\sqrt{n N}} e^{2 \pi i (n - k) / 3 + \pi i (nN - 1) / 12} \sum_{l = 0}^{n N - 1} \exp \left( \frac{\pi i}{n N} l^2 + \frac{2 \pi i}{n N} j l + \pi i l \right) Z^{k} f_{l}(z).
    \end{align}
    This formula gives that
    \begin{align}
        \chi_{\rho}(\varphi)
        &= \frac{1}{\sqrt{n N}} e^{2 \pi i (n - k) / 3 + \pi i (nN - 1) / 12} \sum_{j = 0}^{n N - 1} \exp \left( \frac{\pi i}{n N} j^2 + \frac{2 \pi i}{n N} j^2 + \pi i j \right) \\
        &= \frac{1}{\sqrt{n N}} e^{2 \pi i (n - k) / 3 + \pi i (nN - 1) / 12} \sum_{j = 0}^{n N - 1} \exp \left( \pi i \frac{3 j^{2} + n N j}{n N} \right).
    \end{align}
    A reciprocity theorem for generalized Gauss sums~\cite{Berndt-Evans-Williams1998Gauss}*{Theorem 1.2.2} implies that
    \begin{align}
        \sum_{j = 0}^{n N - 1} \exp \left( \pi i \frac{3 j^{2} + n N j}{n N} \right)
        &= \sqrt{\frac{n N}{3}} \exp \left( \frac{\pi i}{4} \left( 1 - \frac{(n N)^{2}}{3 n N} \right) \right) \sum_{l = 0}^{2} \exp \left( \pi i \frac{ -  n N l^{2} - n N l}{3} \right) \\
        &= \sqrt{\frac{n N}{3}} e^{\pi i (3 - n N) / 12} (2 + e^{- 2 \pi i n N / 3}).
    \end{align}
    Thus we have
    \begin{align}
        \chi_{\rho}(\varphi)
        &= \frac{1}{\sqrt{n N}} e^{2 \pi i (n - k) / 3 + \pi i (nN - 1) / 12} \sqrt{\frac{n N}{3}} e^{\pi i (3 - n N) / 12} (2 + e^{- 2 \pi i n N / 3}) \\
        &= \frac{1}{\sqrt{3}} e^{2 \pi i (n - k) / 3 + \pi i / 6} 
        (2 + e^{- 2 \pi i n N / 3}) \\
        &= \frac{2}{\sqrt{3}} e^{2 \pi i (n - k) / 3 + \pi i / 6} + \frac{1}{\sqrt{3}} e^{2 \pi i (n - k - n N) / 3 + \pi i / 6}.
    \end{align}
    Therefore we have 
    \begin{equation}
        \dim \mathscr{H}_{n, k}^{\varphi} = \frac{1}{3} \left( n N + \frac{4}{\sqrt{3}} \cos \left( \frac{2 \pi (n - k)}{3} + \frac{\pi}{6} \right) + \frac{2}{\sqrt{3}} \cos \left( \frac{2 \pi (n - k - n N)}{3} + \frac{\pi}{6} \right) \right).
    \end{equation}

    In the case $n < 0$, we have the orthogonal decomposition $\mathscr{H}_{n}^{\varphi} = \bigoplus_{k \geq 0} \overline{\mathscr{H}_{- n, k}^{\varphi}}$ and $\overline{\mathscr{H}_{- n, k}^{\varphi}}$ is the joint eigenspace corresponding to the eigenvalue $\sigma_{n, k}$ and $i^{- 1} T$ on $M$.
    In particular,
    \begin{align}
        \dim \overline{\mathscr{H}_{- n, k}^{\varphi}} &= \dim \mathscr{H}_{- n, k}^{\varphi} \\
        &= \frac{1}{3} \left( \abs{n} N + \frac{4}{\sqrt{3}} \cos \left( \frac{2 \pi (\abs{n} - k)}{3} + \frac{\pi}{6} \right) + \frac{2}{\sqrt{3}} \cos \left( \frac{2 \pi (\abs{n} - k - \abs{n} N)}{3} + \frac{\pi}{6} \right) \right),
    \end{align}
    which completes the proof.
\end{proof}

We next turn to the case of $\Gamma_{c}^{\prime}$.
Since $\varphi^{\prime} = (0, c / 3) \varphi$, we can use the computation in the case of $\Gamma_{c}$.

\begin{thm}
    The multiplicity $m_{n, k}^{\prime}$ of $\sigma_{n, k}$ in the joint spectrum of $\Delta_{b}$ and $i^{- 1} T$ on $M'$ is given by
    \begin{equation}
        m_{n, k}^{\prime} = \frac{1}{3} \left( \abs{n} N + \frac{4}{\sqrt{3}} \cos \left( \frac{2 \pi (\abs{n} + k)}{3} - \frac{\pi}{6} \right) + \frac{2}{\sqrt{3}} \cos \left( \frac{2 \pi (\abs{n} + k + \abs{n} N)}{3} - \frac{\pi}{6} \right) \right).
    \end{equation}
    The multiplicity of $(\kappa, 0)$ in the joint spectrum is equal to $\# E_{\kappa} / 3$ for any $\kappa > 0$.
\end{thm}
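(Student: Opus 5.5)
The plan mirrors the treatment of $\Gamma_{c}^{\prime}$ in the order-four case: reduce everything to the character computation already done for $\Gamma_{c}$. Functions on $M^{\prime}$ are $\varphi^{\prime}$-invariant functions on $M_{\omega, c}$. For $n = 0$ we have $(\varphi^{\prime})^{\ast} \chi_{\zeta} = \chi_{\zeta}((0, c/3)\,\cdot)\circ\varphi = \varphi^{\ast}\chi_{\zeta}$, since $\chi_{\zeta}$ does not depend on $t$. The orbit argument is therefore identical, and it gives $\# E_{\kappa}/3$ for $\kappa > 0$.

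For $n > 0$, let $\rho^{\prime}$ be the pullback representation of $\langle \varphi^{\prime} \rangle \cong \bbZ/3\bbZ$ on $\mathscr{H}_{n,k}$. The Schur orthogonality relation gives
\begin{equation}
\dim \mathscr{H}_{n,k}^{\varphi^{\prime}} = \tfrac{1}{3}\bigl(nN + 2\Real \chi_{\rho^{\prime}}(\varphi^{\prime})\bigr).
\end{equation}
Since $\varphi^{\prime} = (0, c/3)\varphi$ and $(0,c/3)$ is central, we get $((\varphi^{\prime})^{\ast} f)(z,t) = f((0,c/3)\varphi(z,t))$. For $f \in \mathscr{H}_{n}$ this equals $e^{2\pi i n/3}(\varphi^{\ast} f)(z,t)$. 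Hence, using the formula for $\chi_{\rho}(\varphi)$ from the previous proof,
\begin{equation}
\chi_{\rho^{\prime}}(\varphi^{\prime}) = e^{2\pi i n/3}\chi_{\rho}(\varphi) = \tfrac{2}{\sqrt{3}} e^{2\pi i (2n-k)/3 + \pi i/6} + \tfrac{1}{\sqrt{3}} e^{2\pi i(2n-k-nN)/3 + \pi i/6}.
\end{equation}

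The remaining step is to rewrite the real parts in the form stated. Using $e^{4\pi i n/3} = e^{-2\pi i n/3}$ and the evenness of cosine, we obtain
\begin{equation}
\Real e^{2\pi i(2n-k)/3+\pi i/6} = \cos\bigl(-\tfrac{2\pi(n+k)}{3}+\tfrac{\pi}{6}\bigr) = \cos\bigl(\tfrac{2\pi(n+k)}{3}-\tfrac{\pi}{6}\bigr).
\end{equation}
In the same way, the second term becomes $\cos\bigl(\tfrac{2\pi(n+k+nN)}{3}-\tfrac{\pi}{6}\bigr)$. This yields the formula for $n>0$. This bookkeeping of phases modulo $2\pi$ is the only place where care is needed; the Gauss sum evaluation is inherited and need not be redone.

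For $n<0$, use $\mathscr{H}_{n}^{\varphi^{\prime}} = \bigoplus_{k} \overline{\mathscr{H}_{-n,k}^{\varphi^{\prime}}}$, which holds because $(\varphi^{\prime})^{\ast}$ commutes with complex conjugation. The dimension is then the one already computed with $n$ replaced by $\abs{n}$, which completes the argument.
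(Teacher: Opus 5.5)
Your proposal is correct and matches the paper's proof step for step. Like the paper, you write $\varphi^{\prime} = (0, c/3)\varphi$ to obtain $\chi_{\rho^{\prime}}(\varphi^{\prime}) = e^{2\pi i n/3}\chi_{\rho}(\varphi)$, inherit the Gauss-sum value of $\chi_{\rho}(\varphi)$, reduce the phases modulo $2\pi$, and handle $n = 0$ and $n < 0$ exactly as in the $\Gamma_{c}$ case.
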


\begin{proof}
    Any function on $M^{\prime}$ can be identified with a $\varphi^{\prime}$-invariant function on $M_{\omega, c}$.
    Denote by $\mathscr{H}_{n}^{\varphi^{\prime}}$ the space of $\varphi^{\prime}$-invariant functions in $\mathscr{H}_{n}$.
    Then $L^{2}(M^{\prime}) = \bigoplus_{n \in\bbZ} \mathscr{H}_{n}^{\varphi^{\prime}}$.
    The proof for $n = 0$ is the same as that in the case of $M$.
    
    Assume that $n > 0$.
    Set $\mathscr{H}_{n, k}^{\varphi^{\prime}} = \mathscr{H}_{n, k} \cap \mathscr{H}_{n}^{\varphi^{\prime}}$, which is the space of $\varphi^{\prime}$-invariant functions in $\mathscr{H}_{n, k}$.
    Note that $\mathscr{H}_{n}^{\varphi^{\prime}} = \bigoplus_{k \in \bbZ_{\geq 0}} \mathscr{H}_{n, k}^{\varphi^{\prime}}$ and
    $\mathscr{H}_{n, k}^{\varphi^{\prime}}$ is the joint eigenspace corresponding to the eigenvalue $\sigma_{n, k}$ of $\Delta_{b}$ and $i^{- 1} T$ on $M^{\prime}$.
    To compute the dimension of $\mathscr{H}_{n, k}^{\varphi^{\prime}}$,
    we consider the representation $\rho^{\prime}$ of $\Gamma_{c}^{\prime} / \Lambda_{\omega, c} \cong \langle \varphi^{\prime} \rangle \cong \bbZ / 3 \bbZ$ on $\mathscr{H}_{n, k}$ defined by the pullback.
    The Schur orthogonality relation implies that
    \begin{equation}
        \dim \mathscr{H}_{n, k}^{\varphi^{\prime}} = \frac{1}{3} \sum_{l = 0}^{2} \chi_{\rho^{\prime}}((\varphi^{\prime})^{l}) = \frac{1}{3} (n N + 2 \Real \chi_{\rho^{\prime}}(\varphi^{\prime})).
    \end{equation}
    Since $\varphi^{\prime} = (0, c / 3) \varphi$,
    \begin{equation}
        ((\varphi^{\prime})^{\ast} f)(z, t) = f((0, c / 3) \varphi(z, t)) = e^{2 \pi i n / 3} (\varphi^{\ast} f)(z, t)
    \end{equation}
    for any $f \in \mathscr{H}_{n}$.
    Thus we have
    \begin{equation}
        \chi_{\rho^{\prime}}(\varphi^{\prime})
        = e^{2 \pi i n / 3} \chi_{\rho}(\varphi)
        = \frac{2}{\sqrt{3}} e^{- 2 \pi i (n + k) / 3 + \pi i / 6} + \frac{1}{\sqrt{3}} e^{- 2 \pi i (n + k + n N) / 3 + \pi i / 6}.
    \end{equation}
    Therefore we have 
    \begin{equation}
        \dim \mathscr{H}_{n, k}^{\varphi^{\prime}} = \frac{1}{3} \left( n N + \frac{4}{\sqrt{3}} \cos \left( \frac{2 \pi (n + k)}{3} - \frac{\pi}{6} \right) + \frac{2}{\sqrt{3}} \cos \left( \frac{2 \pi (n + k + n N)}{3} - \frac{\pi}{6} \right) \right).
    \end{equation}

    In the case $n < 0$, we have the orthogonal decomposition $\mathscr{H}_{n}^{\varphi^{\prime}} = \bigoplus_{k \geq 0} \overline{\mathscr{H}_{- n, k}^{\varphi^{\prime}}}$ and $\overline{\mathscr{H}_{- n, k}^{\varphi^{\prime}}}$ is the joint eigenspace corresponding to the eigenvalue $\sigma_{n, k}$ of $\Delta_{b}$ and $i^{- 1} T$ on $M^{\prime}$.
    In particular,
    \begin{align}
        \dim \overline{\mathscr{H}_{- n, k}^{\varphi^{\prime}}} &= \dim \mathscr{H}_{- n, k}^{\varphi^{\prime}} \\
        &= \frac{1}{3} \left( \abs{n} N + \frac{4}{\sqrt{3}} \cos \left( \frac{2 \pi (\abs{n} + k)}{3} - \frac{\pi}{6} \right) + \frac{2}{\sqrt{3}} \cos \left( \frac{2 \pi (\abs{n} + k + \abs{n} N)}{3} - \frac{\pi}{6} \right) \right),
    \end{align}
    which completes the proof.
\end{proof}

\begin{rem}
\label{rem:non conjugate of order 2}
    Consider the case $N \equiv 0$.
    Suppose to the contrary that $\Gamma_{c}$ is conjugate to $\Gamma_{c}^{\prime}$ in $\bbH \rtimes \bbC^{\ast}$.
    Then $(M, T^{1, 0} M, r \theta_{\bbH})$ is isomorphic to $(M^{\prime}, T^{1, 0} M^{\prime}, r^{\prime} \theta_{\bbH})$ for some $r^{\prime} \in \bbR_{> 0}$ as a pseudo-Hermitian manifold.
    Considering the spectrum of $i^{- 1} T$ yields that $r = r^{\prime}$,
    and so $m_{n, k}$ must be equal to $m_{n, k}^{\prime}$ for every $n$ and $k$.
    However, we can see that
    \begin{equation}
        m_{1, 0} = \frac{N}{3} - 1 \neq \frac{N}{3} = m_{1, 0}^{\prime},
    \end{equation}
    which is a contradiction.
    Therefore $\Gamma_{c}$ is not conjugate to $\Gamma_{c}^{\prime}$ in $\bbH \rtimes \bbC^{\ast}$.
\end{rem}

\section{Joint spectrum on Heisenberg Bieberbach manifolds $\Gamma \backslash \bbH$ with $\proj_{2}(\Gamma) = \langle e^{\pi i / 3} \rangle$}

To simplify notation, we write $\eta$ for $e^{\pi i / 3}$.
In this section, $\equiv$ is understood modulo $3$.
Let $\Gamma$ be a Heisenberg Bieberbach group with $\proj_{2}(\Gamma) = \langle \eta \rangle$ and consider the Heisenberg Bieberbach manifold $\Gamma \backslash \bbH$.
Recall that $N = \# (Z(\Lambda) / [\Lambda, \Lambda])$, where $\Lambda = \Gamma \cap \bbH$.
We first give a classification of such $\Gamma$ up to conjugacy in $\bbH \rtimes \bbC^{\ast}$.

\begin{prop}
\label{prop:HB group of order 6}
    Let $\Gamma$ be a Heisenberg Bieberbach group satisfying $\proj_{2}(\Gamma) = \langle \eta \rangle$ and set $c = 4 \Imaginary \eta / N$.
    Then $N$ must be even and the conjugacy class of $\Gamma$ in $\bbH \rtimes \bbC^{\ast}$ depends on $N$ modulo $3$ as follows.
    If $N \equiv 0$, then it is conjugate to either
    \begin{equation}
        \Gamma_{c} \coloneqq \langle \Lambda_{\eta, c}, \psi \coloneqq (0, c / 6) \mu(\eta) \rangle \quad \text{ or } \quad \Gamma_{c}^{\prime} \coloneqq \langle \Lambda_{\eta, c}, \psi^{\prime} \coloneqq (0, 5 c / 6) \mu(\eta) \rangle.
    \end{equation}
    If $N \equiv 1$, then it is conjugate to $\Gamma_{c}$.
    If $N \equiv 2$, then it is conjugate to $\Gamma_{c}^{\prime}$.
\end{prop}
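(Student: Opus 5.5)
We follow the pattern of the proofs of Propositions~\ref{prop:HB group of order 2}, \ref{prop:HB group of order 4} and \ref{prop:HB group of order 3}. By Proposition~\ref{prop:lattice} we may assume $\Gamma \cap \bbH = \Lambda_{\tau, c}$. Since $L_{\tau}$ is invariant under multiplication by $\eta$, and $\eta^{2} = \omega$ and $L_{\eta} = L_{\omega}$, we may take $\tau = \eta$, so that $c = 4 \Imaginary \eta / N$. Pick $\gamma = (\zeta, \sigma) \mu(\eta) \in \Gamma$; then $\Gamma = \langle \Lambda_{\eta, c}, \gamma \rangle$.

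\emph{Normalizer condition.} We compute $\gamma (1, 0) \gamma^{-1}$ and $\gamma (\eta, 0) \gamma^{-1}$. Note that $\eta \cdot \eta = \eta^{2} = \eta - 1$, so $(\eta^{2}, \ast)$ must be written as $(-1,0)(\eta,0)$ times a central correction. Requiring both conjugates to lie in $\Lambda_{\eta, c}$ should force
\begin{equation}
    \zeta = \frac{k + l \eta}{N} + \zeta_{0}
\end{equation}
for some integers $k, l$ and an explicit shift $\zeta_{0}$ (possibly $0$ or a half-period), as happened with $\omega/2$ in the order-$3$ case.

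\emph{Torsion-freeness.} Here we use the subgroups already classified. The element $\gamma^{3}$ has rotation part $-1$, so $\langle \Lambda_{\eta,c}, \gamma^{3} \rangle$ is an order-$2$ Heisenberg Bieberbach group; by the proof of Proposition~\ref{prop:HB group of order 2}, $N$ must be even. Likewise $\gamma^{2}$ has rotation part $\omega$, so $\langle \Lambda_{\eta,c}, \gamma^{2}\rangle$ falls under Proposition~\ref{prop:HB group of order 3}. Every torsion element has a power of prime order, and that power lies in $\Lambda_{\eta,c}$, $\Gamma \cap \proj_{2}^{-1}(\langle -1\rangle)$, or $\Gamma \cap \proj_{2}^{-1}(\langle \omega\rangle)$. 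Hence $\Gamma$ is torsion-free if and only if both index-$2$ and index-$3$ subgroups $\langle \Lambda, \gamma^{3}\rangle$ and $\langle \Lambda, \gamma^{2}\rangle$ are torsion-free. We translate the conditions from those proofs (parity conditions on the data of $\gamma^{3}$, and the conditions on $k+l$ and on $q$ mod $3$ for $\gamma^{2}$) back into conditions on $k, l$ and
\begin{equation}
    q \coloneqq \frac{6\sigma - (\text{quadratic term in } \abs{\zeta}^{2})}{c} \in \bbZ,
\end{equation}
where $q$ is defined through $\gamma^{6} = (0, qc)$.

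\emph{Conjugation to normal form.} Since $1 - \eta = \bar{\eta}$ is a unit, we set $\zeta' = -\zeta/(1-\eta)$. By Proposition~\ref{prop:conjugate}, $h = (\zeta', 0)$ normalizes $\Lambda_{\eta,c}$ (using the divisibility conditions on $k, l$), and $h \gamma h^{-1} = (0, qc/6)\mu(\eta)$. The conditions should then say that $q$ is coprime to $6$, so $q \equiv 1$ or $5 \pmod 6$. Multiplying by central elements $(0, pc)$ yields $\Gamma_{c}$ or $\Gamma_{c}'$.

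The main obstacle is pinning down how $N \bmod 3$ restricts $q$. This comes from the $\gamma^{2}$ subgroup. After conjugation, $\gamma^{2} = (0, qc/3)\mu(\omega)$, and this must be matched against the normal form $(\omega/2, q'c/3 + \Imaginary\omega/6)\mu(\omega)$ of Proposition~\ref{prop:HB group of order 3}. Doing so involves a translation by $\omega/2$, which shifts the central coordinate by an amount depending on $N$ and produces the condition $q \not\equiv 0, N \pmod 3$. For $N \equiv 1$ this leaves $q \equiv 2 \pmod 3$, i.e.\ $q \equiv 5 \pmod 6$ with $q$ odd. That would give $\Gamma_{c}'$, not $\Gamma_{c}$ as stated, so the signs and the shift must be tracked carefully; an extra $\omega/2$-type conjugation swapping residues may account for the discrepancy. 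I would first verify this bookkeeping directly by computing $(\gamma')^{6}$ for a general $\gamma' = \lambda\gamma$, with $\lambda \in \Lambda_{\eta,c}$, rather than relying on the lower-order propositions.
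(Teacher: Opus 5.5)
There is a genuine gap. Your overall architecture matches the paper's: reduce torsion-freeness to $\langle\Lambda_{\eta,c},\gamma^{3}\rangle$ and $\langle\Lambda_{\eta,c},\gamma^{2}\rangle$, get $N$ even and $q$ odd from the first, and conjugate by $h=(-\zeta/(1-\eta),0)$ to reach $(0,qc/6)\mu(\eta)$. But the step that carries the content of the statement, namely how $N \bmod 3$ restricts $q$, is wrong, and you leave the resulting contradiction unresolved.

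Your criterion $q\not\equiv 0,N\pmod 3$ is false when $3\nmid N$, and no further conjugation can swap the residues. For $N=4$ one has $c=\Imaginary\eta=\Imaginary\omega$ and $(1,-c)\in\Lambda_{\eta,c}$. The element $g=(1,-c)(\psi^{\prime})^{2}=(1,2c/3)\mu(\omega)\in\Gamma_{c}^{\prime}$ then satisfies $g^{3}=(0,3\cdot\tfrac{2c}{3}-2\Imaginary\omega)=(0,0)$, so $\Gamma_{c}^{\prime}$ is not even torsion-free.

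The error is that you applied the final criterion of Proposition~\ref{prop:HB group of order 3} as if $\gamma^{2}=(0,qc/3)\mu(\omega)$ were already its normal form with the same $q$. That criterion is valid only after normalizing $\zeta=(k+l\omega)/N+\omega/2$ to $k+l\equiv 0\pmod 3$. For $\gamma^{2}$ one has $\zeta=0=(0+(-N/2)\omega)/N+\omega/2$, so $k+l=-N/2\equiv N\pmod 3$. The translation by $\omega/2$ is conjugation by $((\omega-1)/6,0)$; it does turn $(0,qc/3)\mu(\omega)$ into $(\omega/2,qc/3+\Imaginary\omega/6)\mu(\omega)$, but it normalizes $\Lambda_{\eta,c}$ only when $6\mid N$.

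To close the gap, use instead the unnormalized congruence from that proof: $q-(m+n)(k+l)-\tfrac{1}{2}(m+n)(m+n+1)N\not\equiv 0$ for all $m,n$. With $k+l\equiv N$ this yields exactly $q\not\equiv 0$ and $q\not\equiv 2N\pmod 3$. Equivalently, when $3\nmid N$ you must first multiply $\gamma^{2}$ by a lattice element before conjugating to normal form, and this changes the invariant to $q^{\prime}\equiv q+N$.

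Alternatively, carry out the direct check you mention at the end. Since $N$ is even, $\Lambda_{\eta,c}=\{(w,pc) : w\in L_{\eta},\ p\in\bbZ\}$. Every element with rotation part $\omega$ is $(w,qc/3+pc)\mu(\omega)$, and its cube is $(0,c(q+3p-\tfrac{N}{2}\abs{w}^{2}))$. Moreover $\abs{m+n\eta}^{2}=m^{2}+mn+n^{2}\equiv(m-n)^{2}$ takes exactly the residues $0,1$ modulo $3$, which again gives $q\not\equiv 0,2N$.

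Combined with $q$ odd, this gives:
\begin{itemize}
\item $N\equiv 1$: $q\equiv 1\pmod 6$, so $\Gamma_{c}$;
\item $N\equiv 2$: $q\equiv 5\pmod 6$, so $\Gamma_{c}^{\prime}$;
\item $N\equiv 0$: both are possible.
\end{itemize}

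A smaller point: your conjugation step needs $h$ to normalize $\Lambda_{\eta,c}$, i.e.\ the shift $\zeta_{0}$ you left undetermined must vanish. It does vanish. The central correction $2\Imaginary\eta=Nc/2$ coming from $\eta^{2}=\eta-1$ lies in $c\bbZ$ once $N$ is even, so the normalizer conditions give exactly $\zeta\in\frac{1}{N}L_{\eta}$.
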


\begin{proof}
    Without loss of generality, we may assume that $\Gamma \cap \bbH = \Lambda_{\tau, c}$.
    In addition, we can assume that $\tau = \eta$ since $L_{\tau}$ is invariant under the multiplication by $\eta$.
    Take $g = (\zeta, \sigma) \in \bbH$ such that $\gamma \coloneqq g \mu(\eta) \in \Gamma$.
    Note that $\Gamma = \langle \Lambda_{\eta, c}, \gamma \rangle$.
    We can see that
    \begin{align}
        \gamma^{2} &= (\zeta, \sigma) (\eta \zeta, \sigma) \mu(\omega) = (\zeta + \eta \zeta, 2 \sigma - 2 \abs{\zeta}^{2} \Imaginary \eta) \mu(\omega), \\
        \gamma^{3} &= (\zeta, \sigma) (\eta \zeta + \eta^{2} \zeta, 2 \sigma - 2 \abs{\zeta}^{2} \Imaginary \eta) \mu(- 1) = (2 \eta \zeta, 3 \sigma - 6 \abs{\zeta}^{2} \Imaginary \eta ) \mu(- 1),
    \end{align}
    and $\gamma^{6} = (0, 6 \sigma - 12 \abs{\zeta}^{2} \Imaginary \eta) \in \Gamma \cap \bbH = \Lambda_{\eta, c}$,
    which implies $q \coloneqq (6 \sigma - 12 \abs{\zeta}^{2} \Imaginary \eta) / c \in \bbZ$.
    Since $\Gamma \cap \proj_{2}^{-1}(\langle - 1 \rangle) = \langle \Lambda_{\eta, c}, \gamma^{3} \rangle$ is torsion-free, $N$ must be even and $q$ must be odd.
    Moreover, it follows from $\gamma \Lambda_{\eta, c} \gamma^{-1} = \Lambda_{\eta, c}$ that
    \begin{align}
        \gamma (1, 0) \gamma^{- 1} &= (\eta, - 4 \Imaginary (\eta \bar{\zeta})) = (\eta, 0) (0, - 4 \Imaginary (\eta \bar{\zeta})) \in \Lambda_{\eta, c}, \\
        \gamma (\eta, 0) \gamma^{- 1} &= (\eta^{2}, - 4 \Imaginary (\eta^{2} \bar{\zeta})) = (\eta, 0) (- 1, 0) (0, N c / 2) (0, - 4 \Imaginary (\eta^{2} \bar{\zeta})) \in \Lambda_{\eta, c},
    \end{align}
    which implies $4 \Imaginary (\eta \bar{\zeta}), 4 \Imaginary (\eta^{2} \bar{\zeta}) \in c \bbZ$.
    Hence there exist $k, l \in \bbZ$ such that
    \begin{equation}
        \zeta = \frac{c}{4 \Imaginary \eta} (k + l \eta) = \frac{k + l \eta}{N}.
    \end{equation}
    Set
    \begin{equation}
        \zeta' \coloneqq - \frac{\zeta}{1 - \eta} = \frac{l - (k + l) \eta}{N}.
    \end{equation}
    It follows from Proposition~\ref{prop:conjugate} that $h \coloneqq (\zeta', 0) \in \bbH$ satisfies $h \Lambda_{\eta, c} h^{-1} = \Lambda_{\eta, c}$.
    Moreover, we have
    \begin{equation}
        h \gamma h^{-1} = (\zeta', 0) (\zeta, \sigma) (- \eta \zeta', 0) \mu(\eta) = (0, \sigma - 2 \abs{\zeta}^{2} \Imaginary \eta) \mu(\eta) = (0, q / 6) \mu(\eta).
    \end{equation}
    Hence we may assume that $\zeta = 0$.
    It remains to determine the condition under which $\Gamma$ is torsion-free.
    If an element $\gamma'$ in $\Gamma \cap \proj_{2}^{-1}(\eta)$ or $\Gamma \cap \proj_{2}^{-1}(\eta^{5})$ is a torsion element, then so is $(\gamma')^{3} \in \Gamma \cap \proj_{2}^{-1}(- 1)$.
    Hence $\Gamma$ is torsion-free if and only if both $\Gamma \cap \proj_{2}^{-1}(\langle - 1 \rangle) = \langle \Lambda_{\eta, c}, \gamma^{3} \rangle$ and $\Gamma \cap \proj_{2}^{-1}(\langle \omega \rangle) = \langle \Lambda_{\eta, c}, \gamma^{2} \rangle$ are torsion-free.
    The former subgroup is torsion-free since $\gamma^{3} = (0, 3 \sigma) \mu(- 1)$ and $q$ is odd.
    Moreover, it follows from the proof of Proposition~\ref{prop:HB group of order 3} and $0 = (0 + (- N / 2) \omega) / N + \omega / 2$ that the latter subgroup is also torsion-free if and only if
    \begin{equation}
        q, q - \left( 0 - \frac{N}{2} + N \right), q - 2 \left( 0 - \frac{N}{2} \right) \not\equiv 0,
    \end{equation}
    or equivalently $q \not\equiv 0, 2 N$.
    Therefore $\Gamma$ is conjugate in $\bbH \rtimes \bbC^{\ast}$ to
    \begin{equation}
        \langle \Lambda_{\eta, c}, (0, \sigma) \mu(\eta) \rangle
        =
        \begin{cases}
            \Gamma_{c} & \text{if $q \equiv 1$}, \\
            \Gamma_{c}^{\prime} & \text{if $q \equiv 2$},
        \end{cases}
    \end{equation}
    which completes the proof.
\end{proof}

This proposition implies that $\Gamma \backslash \bbH$ is isomorphic to either $(M \coloneqq \Gamma_{c} \backslash \bbH, T^{1, 0} M, r \theta_{\bbH})$ or $(M^{\prime} \coloneqq \Gamma_{c}^{\prime} \backslash \bbH, T^{1, 0} M^{\prime}, r \theta_{\bbH})$ for some $r > 0$ as a pseudo-Hermitian manifold.
Hence it suffices to study the joint spectrum of $\Delta_{b}$ and $i^{- 1} T$ on these pseudo-Hermitian manifolds.
We first consider $(M, T^{1, 0} M, r \theta_{\bbH})$.

\begin{thm}
    The multiplicity $m_{n, k}$ of $\sigma_{n, k}$ in the joint spectrum of $\Delta_{b}$ and $i^{- 1} T$ on $M$ is given by
    \begin{align}
        m_{n, k}
        &= \frac{1}{6} \Bigg( \abs{n} N + 2 (- 1)^{n + k} + 2 \cos \frac{\pi (\abs{n} - k + 1)}{3}  + \frac{2}{\sqrt{3}} \cos \left( \frac{2 \pi (\abs{n} - k)}{3} + \frac{\pi}{6} \right) \\
        &\qquad \qquad+ \frac{4}{\sqrt{3}} \cos \left( \frac{2 \pi (\abs{n} - k + \abs{n} N)}{3} + \frac{\pi}{6} \right) \Bigg).
    \end{align}
    The multiplicity of $(\kappa, 0)$ in the joint spectrum is equal to $\# E_{\kappa} / 6$ for any $\kappa > 0$.
\end{thm}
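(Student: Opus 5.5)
The plan is to follow the same scheme as in the previous sections. A function on $M$ is a $\psi$-invariant function on $M_{\eta, c}$, so $L^{2}(M) = \bigoplus_{n} \mathscr{H}_{n}^{\psi}$. For $n = 0$, the relation $\psi^{\ast} \chi_{\zeta} = \chi_{\bar{\eta} \zeta}$ shows that $\langle \psi \rangle \cong \bbZ / 6 \bbZ$ acts freely on $E_{\kappa}$ for $\kappa > 0$, since $\eta^{l} \zeta \neq \zeta$ whenever $\zeta \neq 0$ and $l \not\equiv 0 \bmod 6$. This gives $\# E_{\kappa} / 6$.

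For $n > 0$, let $\rho$ be the pullback representation of $\langle \psi \rangle$ on $\mathscr{H}_{n, k}$. The Schur orthogonality relation gives
\begin{equation}
    \dim \mathscr{H}_{n, k}^{\psi} = \frac{1}{6} \left( n N + 2 \Real \chi_{\rho}(\psi) + 2 \Real \chi_{\rho}(\psi^{2}) + \chi_{\rho}(\psi^{3}) \right),
\end{equation}
using $\chi_{\rho}(\psi^{5}) = \overline{\chi_{\rho}(\psi)}$ and $\chi_{\rho}(\psi^{4}) = \overline{\chi_{\rho}(\psi^{2})}$, which holds because $\rho$ is unitary. The case $n < 0$ then follows by complex conjugation, exactly as before.

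The three characters are handled as follows.
\begin{itemize}
\item[(i)] $\chi_{\rho}(\psi^{3})$. Since $\psi^{3} = (0, c / 2) \mu(- 1)$ and $L_{\eta} = L_{\omega}$ is also stable under $z \mapsto -z$, the computation in the proof of Theorem~\ref{thm:joint spectrum of order 2} applies verbatim with $\tau = \eta$ ($h_{j}(-z) = h_{nN - j}(z)$ and $nN$ even). It gives $\chi_{\rho}(\psi^{3}) = 2 (- 1)^{n + k}$.
\item[(ii)] $\chi_{\rho}(\psi^{2})$. Here $\psi^{2} = (0, c / 3) \mu(\omega)$. The proof of Proposition~\ref{prop:HB group of order 6} shows that this element is the order-$3$ generator with $\zeta = 0 = (0 + (-N/2) \omega)/N + \omega/2$. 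Rather than redo the conjugation, I would compute directly in the basis $Z^{k} f_{j}$ of $\mathscr{H}_{n, k}$ for $\Lambda_{\eta, c}$. The steps are:
  \begin{itemize}
  \item[$\bullet$] write $(\psi^{2})^{\ast} f_{j}(z, t) = f_{j}(\omega z, t + c/3)$ as a scalar times $\exp(\frac{2\pi n}{c}(it - |z|^{2} + z^{2}))$ times a Gaussian in $z$ times $h_{j}(\omega z)$;
  \item[$\bullet$] apply Poisson summation to the resulting series, exactly as in the order-$3$ theorem;
  \item[$\bullet$] record $Z$-commutation as a factor $\omega^{-k}$.
  \end{itemize}
  This yields a generalized quadratic Gauss sum of modulus $nN$. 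The reciprocity theorem \cite{Berndt-Evans-Williams1998Gauss}*{Theorem 1.2.2} turns it into a sum of three terms, which I expect to give
  \begin{equation}
      \chi_{\rho}(\psi^{2}) = \frac{1}{\sqrt{3}} e^{2 \pi i (n - k)/3 + \pi i/6} \left( 1 + 2 e^{2 \pi i n N / 3} \right).
  \end{equation}
  Taking $2 \Real$ produces the two $\cos(\cdot + \pi/6)$ terms of the statement.
\item[(iii)] $\chi_{\rho}(\psi)$. For $\psi = (0, c/6) \mu(\eta)$ the computation runs the same way. One gets $\psi^{\ast} f_{j} = e^{\pi i n/3} \exp(\cdots)\, G(z)\, h_{j}(\eta z)$, where $G(z)$ is a Gaussian factor of the form $\exp(\pi i n N \alpha z^{2})$ with $\alpha$ a constant depending on $\eta$. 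After Poisson summation, $\psi^{\ast} f_{j}$ becomes a combination $\sum_{l} a_{jl} f_{l}$ with explicit Gaussian coefficients, and $\psi^{\ast} Z^{k} = \eta^{-k} Z^{k} \psi^{\ast}$. The trace is a Gauss sum of the form $\sum_{j} \exp(\pi i (a j^{2} + b\, nN j)/nN)$ with small integers $a, b$. Reciprocity should collapse it to a single term of modulus $\sqrt{nN}$, giving $\chi_{\rho}(\psi) = e^{\pi i (n - k + 1)/3}$ and hence the term $2 \cos(\pi(n - k + 1)/3)$.
\end{itemize}

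The main obstacle is step (iii), together with matching the phases in (ii). Two sources of error need care: the eighth roots of unity coming from the Fresnel-type integral $\hat{g}$, and the parity conventions in the Gauss-sum reciprocity, where $nN$ is even. A sign error there would shift the angles by multiples of $\pi/6$. As a check I would verify the final formula at small $N$ (for instance $N = 2$, $n = 1$, $k = 0$, where $\dim \mathscr{H}_{1,0} = 2$) and confirm that $m_{n,k}$ is a nonnegative integer.
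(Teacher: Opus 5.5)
Your proposal is correct. For $n=0$, for $\chi_{\rho}(\psi^{3})$ and for $\chi_{\rho}(\psi)$ it is the paper's argument. There $\alpha=\eta$, and the trace sum is simply $\sum_{j} e^{\pi i j^{2}/nN}=\sqrt{nN}\,e^{\pi i/4}$.

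The one place you diverge is step (ii). The paper never treats $\psi^{2}=(0,c/3)\mu(\omega)$ on its own. Instead it squares the already computed matrix of $\psi^{\ast}$, so the trace becomes a double sum. Its inner sum $\sum_{l} e^{\pi i(-l^{2}+4jl)/nN}$ is evaluated by one application of reciprocity, and a second application handles $\sum_{j} e^{3\pi i j^{2}/nN}$.

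Your direct route also works and needs only one Poisson summation and one reciprocity step. It is not literally the order-$3$ computation, though. Since $h_{j}$ is built with $\tau=\eta$ while the rotation is by $\omega=\eta-1$, completing the square produces an extra factor $e^{\pi i j^{2}/nN}$; this step uses that $nN$ is even. After Poisson summation, the quadratic coefficient $\pi i(\omega+1)/nN=\pi i\eta/nN$ is exactly what reassembles $h_{l}$. The outcome is
\[
(\psi^{2})^{\ast} f_{j} = \frac{e^{2\pi i n/3-\pi i/12}}{\sqrt{nN}}\, e^{\pi i j^{2}/nN} \sum_{l=0}^{nN-1} e^{2\pi i jl/nN} f_{l}.
\]
This agrees with the paper's $(\psi^{\ast})^{2} f_{j}$, gives the same sum $\sum_{j} e^{3\pi i j^{2}/nN}$, and hence yields your predicted value of $\chi_{\rho}(\psi^{2})$.

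The paper's route lets it reuse the $\psi^{\ast}$ matrix. Yours avoids the intermediate double sum at the cost of one more Fresnel-type integral.
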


\begin{proof}
    Any function on $M$ can be identified with a $\psi$-invariant function on $M_{\eta, c}$.
    Denote by $\mathscr{H}_{n}^{\psi}$ the space of $\psi$-invariant functions in $\mathscr{H}_{n}$.
    Then $L^{2}(M) = \bigoplus_{n \in\bbZ} \mathscr{H}_{n}^{\psi}$.
    
    In the case $n = 0$, one has $\mathscr{H}_{0} = \bigoplus_{\zeta \in L_{\eta}^{\prime}} \bbC \chi_{\zeta}$
    and $\Delta_{b} \chi_{\zeta} = (\pi^{2} \abs{\zeta}^{2} / r) \chi_{\zeta}$.
    Since $\psi^{\ast} \chi_{\zeta} =  \chi_{\bar{\eta} \zeta}$,
    the multiplicity of the eigenvalue $(\kappa, 0)$ in the joint spectrum of $\Delta_{b}$ and $i^{- 1} T$ on $M$ is equal to $\# E_{\kappa} / 6$ for any $\kappa > 0$.

    Assume that $n > 0$.
    Set $\mathscr{H}_{n, k}^{\psi} \coloneqq \mathscr{H}_{n, k} \cap \mathscr{H}_{n}^{\psi}$, which is the space of $\psi$-invariant functions in $\mathscr{H}_{n, k}$.
    Note that $\mathscr{H}_{n}^{\psi} = \bigoplus_{k \in \bbZ_{\geq 0}} \mathscr{H}_{n, k}^{\psi}$ and
    $\mathscr{H}_{n, k}^{\psi}$ is the joint eigenspace corresponding to the eigenvalue $\sigma_{n, k}$ of $\Delta_{b}$ and $i^{- 1} T$ on $M$.
    To compute the dimension of $\mathscr{H}_{n, k}^{\psi}$,
    we consider the representation $\rho$ of $\Gamma_{c} / \Lambda_{\eta, c} \cong \langle \psi \rangle \cong \bbZ / 6 \bbZ$ on $\mathscr{H}_{n, k}$ defined by the pullback.
    The Schur orthogonality relation implies that
    \begin{equation}
        \dim \mathscr{H}_{n, k}^{\psi} = \frac{1}{6} \sum_{l = 0}^{5} \chi_{\rho}(\psi^{l}) = \frac{1}{6} (n N + 2 \Real \chi_{\rho}(\psi) + 2 \Real \chi_{\rho}(\psi^{2}) + \chi_{\rho}(\psi^{3})).
    \end{equation}
    As we showed in the proof of Theorem~\ref{thm:joint spectrum of order 2}, $\chi_{\rho}(\psi^{3}) = \chi_{\rho}(\varphi) = 2 (- 1)^{n + k}$, where $\varphi$ is as introduced in Proposition~\ref{prop:HB group of order 2}.
    Hence it suffices to compute the values $\chi_{\rho}(\psi)$ and $\chi_{\rho}(\psi^{2})$.
    
    We first consider $\chi_{\rho}(\psi)$.
    For each $0 \leq j \leq n N - 1$, it follows from $N = 4 \Imaginary \eta / c$ that
    \begin{align}
        (\psi^{\ast} f_{j})(z, t)
        &= f_{j}(\eta z, t + c / 6) \\
        & = \exp \left( \frac{2 \pi n}{c} \left(i t + \frac{i c}{6} - \abs{\eta z}^{2} + (\eta z)^{2} \right) \right) h_{j}(\eta z) \\
        &= e^{\pi i n / 3} \exp \left( \frac{2 \pi n}{c} \left(i t - \abs{z}^{2} + \eta^{2} z^{2} \right) \right) h_{j}(\eta z) \\
        &= e^{\pi i n / 3} \exp \left( \frac{2 \pi n}{c} \left(i t - \abs{z}^{2} + z^2 + \eta (\eta - \overline{\eta}) z^{2} \right) \right) h_{j}(\eta z) \\
        &= e^{\pi i n / 3} \exp \left( \frac{2 \pi n}{c} (i t - \abs{z}^{2} + z^{2}) \right) \exp \left( \pi i n N \eta z^{2} \right) h_{j}(\eta z).
    \end{align}
    Now we have
        \begin{equation}\label{h_j-eta}
            \exp \left( \pi i n N \eta z^{2} \right) h_{j}(\eta z)
            = \sum_{m \in \bbZ} \exp \left(
                \pi i n N \eta \left( m + \frac{j}{nN} + z \right)^2 \right).
        \end{equation}
    Consider $g(x) \coloneqq \exp(\pi i n N \eta (x + j / n N + z)^{2}) \in \mathcal{S}(\bbR)$.
    Then the equation \eqref{h_j-eta} and the Poisson summation formula give
    \begin{equation}
        \exp \left( \pi i n N \eta z^{2} \right) h_{j}(\eta z) = \sum_{m \in \bbZ} g(m) = \sum_{l = 0}^{n N - 1} \sum_{m^{\prime} \in \bbZ} \hat{g}(n N m^{\prime} + l),
    \end{equation}
    where $\hat{g}$ is the Fourier transform of $g$.
    A computation yields
    \begin{align}
        \hat{g}(\xi)
        &= \int_{\bbR} \exp \left( \pi i n N \eta \left( x + \frac{j}{n N} + z \right)^{2} - 2 \pi i x \xi \right) \, d x \\
        &= \frac{1}{\sqrt{n N}} e^{\pi i / 12} \exp \left( - \frac{\pi i}{n N \eta} \xi^{2} + 2 \pi i \left( \frac{j}{n N} + z \right) \xi \right) \\
        &= \frac{1}{\sqrt{n N}} e^{\pi i / 12} \exp \left( \frac{\pi i (\eta - 1)}{n N} \xi^{2} + 2 \pi i \left( \frac{j}{n N} + z \right) \xi \right),
    \end{align}
    which implies
    \begin{align}
        &\exp \left( \pi i n N \eta z^{2} \right) h_{j}(\eta z) \\
        &= \frac{1}{\sqrt{n N}} e^{\pi i / 12} \sum_{l = 0}^{n N - 1} \sum_{m^{\prime}} \exp \left( \frac{\pi i (\eta - 1)}{n N} (n N m^{\prime} + l)^{2} + 2 \pi i \left( \frac{j}{n N} + z \right) (n N m^{\prime} + l) \right) \\
        &= \frac{1}{\sqrt{n N}} e^{\pi i / 12} \sum_{l = 0}^{n N - 1} \exp \left( - \frac{\pi i}{n N} l^2 + \frac{2 \pi i}{n N} j l \right) h_{l}(z).
    \end{align}
    Hence we have
    \begin{align}
        (\psi^{\ast} f_{j})(z, t)
        &= e^{\pi i n / 3} \exp \left( \frac{2 \pi n}{c} (i t - \abs{z}^{2} + z^{2}) \right) \\
        &\quad \times \frac{1}{\sqrt{n N}} e^{\pi i / 12} \sum_{l = 0}^{n N - 1} \exp \left( - \frac{\pi i}{n N} l^2 + \frac{2 \pi i}{n N} j l \right) h_{l}(z) \\
        &= \frac{1}{\sqrt{n N}} e^{\pi i n / 3 + \pi i / 12} \sum_{l = 0}^{n N - 1} \exp \left( - \frac{\pi i}{n N} l^2 + \frac{2 \pi i}{n N} j l\right) f_{l}(z).
    \end{align}
    Moreover, one can see that
    \begin{equation}
        (\psi^{\ast} (Z f))(z, t) = (Z f)(\eta z, t + c / 6) = e^{- \pi i / 3} (Z (\psi^{\ast} f))(z, t)
    \end{equation}
    for any $f \in C^{\infty}(\bbH)$.
    Thus we have
    \begin{align}
        \psi^{\ast} (Z^{k} f_{j})
        &= e^{- \pi i k / 3} Z^{k} (\psi^{\ast} f_j) \\
        &= \frac{1}{\sqrt{n N}} e^{\pi i (n - k) / 3 + \pi i / 12} \sum_{l = 0}^{n N - 1} \exp \left( -\frac{\pi i}{n N} l^2 + \frac{2 \pi i}{n N} j l \right) Z^{k} f_{l}(z).
    \end{align}
    This formula gives that
    \begin{align}
        \chi_{\rho}(\psi)
        &= \frac{1}{\sqrt{n N}} e^{\pi i (n - k) / 3 + \pi i / 12} \sum_{j = 0}^{n N - 1} \exp \left(  \frac{\pi i}{n N} j^2 \right) \\
        &= \frac{1}{\sqrt{n N}} e^{\pi i (n - k) / 3 + \pi i / 12} \sqrt{n N} e^{\pi i / 4} \\
        &= e^{\pi i (n - k + 1) / 3}.
    \end{align}
    Here, we use a result on the quadratic Gauss sum~\cite{Lang1994ANT}*{Chapter IV.3}.
    
    We next consider $\chi_{\rho}(\psi^{2})$.
    It follows from the formula of $\psi^{\ast} (Z^{k} f_{j})$ that
    \begin{align}
        (\psi^{2})^{\ast} (Z^{k} f_{j})
        &= \frac{1}{\sqrt{n N}} e^{\pi i (n - k) / 3 + \pi i / 12} \sum_{l = 0}^{n N - 1} \exp \left( -\frac{\pi i}{n N} l^2 + \frac{2 \pi i}{n N} j l \right) \psi^{\ast} (Z^{k} f_{l}) \\
        &= \frac{1}{n N} e^{2 \pi i (n - k) / 3 + \pi i / 6} \sum_{l = 0}^{n N - 1} \sum_{m = 0}^{n N - 1} \exp \left( -\frac{\pi i}{n N} l^2 + \frac{2 \pi i}{n N} j l  -\frac{\pi i}{n N} m^2 + \frac{2 \pi i}{n N} l m  \right) Z^{k} f_{m}.
    \end{align}
    Hence we have
    \begin{align}
        \chi_{\rho}(\psi^{2})
        &= \frac{1}{n N} e^{2 \pi i (n - k) / 3 + \pi i / 6}  \sum_{j = 0}^{n N - 1} \sum_{l = 0}^{n N - 1} \exp \left( -\frac{\pi i}{n N} l^2 + \frac{2 \pi i}{n N} j l -\frac{\pi i}{n N} j^2 + \frac{2 \pi i}{n N} j l \right) \\
        &= \frac{1}{n N} e^{2 \pi i (n - k) / 3 + \pi i / 6}  \sum_{j = 0}^{n N - 1} \exp \left( -\frac{\pi i}{n N} j^2 \right) \sum_{l = 0}^{n N - 1} \exp \left( \pi i \frac{- l^{2} + 4 j l}{n N} \right).
    \end{align}
    A reciprocity theorem for generalized Gauss sums~\cite{Berndt-Evans-Williams1998Gauss}*{Theorem 1.2.2} implies that
    \begin{equation}
        \sum_{l = 0}^{n N - 1} \exp \left( \pi i \frac{- l^{2} + 4 j l}{n N} \right) = \sqrt{n N} \exp \left( \frac{\pi i}{4} \left( - 1 + \frac{(4 j)^{2}}{n N} \right) \right) = \sqrt{n N} e^{- \pi i / 4} \exp \left( \frac{4 \pi i}{n N} j^2\right),
    \end{equation}
    and hence
    \begin{align}
        \chi_{\rho}(\psi^{2})
        &= \frac{1}{n N} e^{2 \pi i (n - k) / 3 + \pi i / 6}  \sum_{j = 0}^{n N - 1} \exp \left( -\frac{\pi i}{n N} j^2 \right) \sqrt{n N} e^{- \pi i / 4} \exp \left( \frac{4 \pi i}{n N} j^2 \right) \\
        &= \frac{1}{\sqrt{n N}} e^{2 \pi i (n - k) / 3 - \pi i / 12} \sum_{j = 0}^{n N - 1} \exp \left( \frac{3 \pi i}{n N} j^2 \right). \\
    \end{align}
    By applying the same theorem again, we have
    \begin{align}
        \sum_{j = 0}^{n N - 1} \exp \left( \frac{3 \pi i}{n N} j^2 \right) &= \sqrt{\frac{n N}{3}} e^{\pi i / 4} \sum_{l = 0}^{2} \exp 
        \left( - \frac{ \pi i n N}{3} l^2 \right) \\
        &= \sqrt{\frac{n N}{3}} e^{\pi i / 4} (1 + e^{- \pi i n N/3} + e^{-4 \pi i n N/3}) \\
        &= \sqrt{\frac{n N}{3}} e^{\pi i / 4} (1 + 2 e^{2 \pi i n N / 3});
    \end{align}
    here we use the fact that $N$ is even.
    It follows that 
    \begin{align}
        \chi_{\rho}(\psi^{2})
        &= \frac{1}{\sqrt{n N}} e^{2 \pi i (n - k) / 3 - \pi i / 12} \sqrt{\frac{n N}{3}} e^{\pi i / 4} (1 + 2 e^{2 \pi i n N / 3}) \\
        &= \frac{1}{\sqrt{3}} e^{2 \pi i (n - k) / 3 + \pi i / 6} + \frac{2}{\sqrt{3}} e^{2 \pi i (n - k + n N) / 3 + \pi i / 6}.
    \end{align}
    Therefore we have 
    \begin{align}
        \dim \mathscr{H}_{n, k}^{\psi}
        &= \frac{1}{6} \Bigg( n N + 2 (- 1)^{n + k} + 2 \cos \frac{\pi (n - k + 1)}{3}  + \frac{2}{\sqrt{3}} \cos \left( \frac{2 \pi (n - k)}{3} + \frac{\pi}{6} \right) \\
        &\qquad \qquad+ \frac{4}{\sqrt{3}} \cos \left( \frac{2 \pi (n - k + n N)}{3} + \frac{\pi}{6} \right) \Bigg).
    \end{align}

    In the case $n < 0$, we have the orthogonal decomposition $\mathscr{H}_{n}^{\psi} = \bigoplus_{k \geq 0} \overline{\mathscr{H}_{- n, k}^{\psi}}$ and $\overline{\mathscr{H}_{- n, k}^{\psi}}$ is the joint eigenspace corresponding to the eigenvalue $\sigma_{n, k}$ of $\Delta_{b}$ and $i^{- 1} T$ on $M$.
    In particular,
    \begin{align}
        \dim \overline{\mathscr{H}_{- n, k}^{\psi}}
        &= \dim \mathscr{H}_{- n, k}^{\psi} \\
        &= \frac{1}{6} \Bigg( \abs{n} N + 2 (- 1)^{n + k} + 2 \cos \frac{\pi (\abs{n} - k + 1)}{3}  + \frac{2}{\sqrt{3}} \cos \left( \frac{2 \pi (\abs{n} - k)}{3} + \frac{\pi}{6} \right) \\
        &\qquad \qquad+ \frac{4}{\sqrt{3}} \cos \left( \frac{2 \pi (\abs{n} - k + \abs{n} N)}{3} + \frac{\pi}{6} \right) \Bigg),
    \end{align}
    which completes the proof.
\end{proof}

We next turn to the case of $\Gamma_{c}^{\prime}$.
Since $\psi^{\prime} = (0, 2 c / 3) \psi$, we can use the computation in the case of $\Gamma_{c}$.

\begin{thm}
    The multiplicity $m_{n, k}^{\prime}$ of $\sigma_{n, k}$ in the joint spectrum of $\Delta_{b}$ and $i^{- 1} T$ on $M'$ is given by
    \begin{align}
        m_{n, k}^{\prime}
        &= \frac{1}{6} \Bigg( \abs{n} N + 2 (- 1)^{n + k} + 2 \cos \frac{\pi (\abs{n} + k - 1)}{3} + \frac{2}{\sqrt{3}} \cos \left( \frac{2 \pi (\abs{n} + k)}{3} - \frac{\pi}{6} \right) \\
        &\qquad \qquad+ \frac{4}{\sqrt{3}} \cos \left( \frac{2 \pi (\abs{n} + k - \abs{n} N)}{3} - \frac{\pi}{6} \right) \Bigg).
    \end{align}
    The multiplicity of $(\kappa, 0)$ in the joint spectrum is equal to $\# E_{\kappa} / 6$ for any $\kappa > 0$.
\end{thm}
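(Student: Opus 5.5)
We mirror the proofs for $\Gamma_{c}^{\prime}$ in the earlier sections and reduce everything to the characters already computed for $\Gamma_{c}$. Functions on $M^{\prime}$ are identified with $\psi^{\prime}$-invariant functions on $M_{\eta, c}$, so $L^{2}(M^{\prime}) = \bigoplus_{n} \mathscr{H}_{n}^{\psi^{\prime}}$. For $n = 0$ the argument is unchanged. The translation $(0, 2c/3)$ acts trivially on $\mathscr{H}_{0}$, so $(\psi^{\prime})^{\ast} \chi_{\zeta} = \chi_{\bar{\eta} \zeta}$. The group $\langle \psi^{\prime} \rangle$ therefore permutes each $E_{\kappa}$ freely in orbits of size $6$, which gives multiplicity $\# E_{\kappa}/6$ for $\kappa > 0$.

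For $n > 0$, let $\rho^{\prime}$ be the pullback representation of $\Gamma_{c}^{\prime}/\Lambda_{\eta, c} \cong \bbZ/6\bbZ$ on $\mathscr{H}_{n, k}$. Schur orthogonality gives
\begin{equation}
    \dim \mathscr{H}_{n, k}^{\psi^{\prime}} = \frac{1}{6} \left( n N + 2 \Real \chi_{\rho^{\prime}}(\psi^{\prime}) + 2 \Real \chi_{\rho^{\prime}}((\psi^{\prime})^{2}) + \chi_{\rho^{\prime}}((\psi^{\prime})^{3}) \right).
\end{equation}
The key observation is that $\psi^{\prime} = (0, 2c/3)\psi$ with $(0, 2c/3)$ central. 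Hence $(\psi^{\prime})^{j} = (0, 2jc/3)\psi^{j}$, and on $\mathscr{H}_{n}$ this yields
\begin{equation}
    ((\psi^{\prime})^{j})^{\ast} f = e^{4 \pi i n j/3} (\psi^{j})^{\ast} f.
\end{equation}
Consequently $\chi_{\rho^{\prime}}((\psi^{\prime})^{j}) = e^{4\pi i n j/3} \chi_{\rho}(\psi^{j})$, and the character values from the preceding theorem can be substituted directly:
\begin{align}
    \chi_{\rho^{\prime}}(\psi^{\prime}) &= e^{4\pi i n/3} e^{\pi i (n - k + 1)/3} = e^{\pi i (5n - k + 1)/3} = e^{-\pi i (n + k - 1)/3}, \\
    \chi_{\rho^{\prime}}((\psi^{\prime})^{3}) &= \chi_{\rho}(\psi^{3}) = 2(-1)^{n+k}.
\end{align}
The first line gives real part $\cos(\pi(n + k - 1)/3)$. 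For the square,
\begin{equation}
    \chi_{\rho^{\prime}}((\psi^{\prime})^{2}) = e^{8\pi i n/3} \left( \frac{1}{\sqrt{3}} e^{2\pi i (n-k)/3 + \pi i/6} + \frac{2}{\sqrt{3}} e^{2\pi i (n - k + nN)/3 + \pi i/6} \right),
\end{equation}
and since $e^{8\pi i n/3} e^{2\pi i n/3} = e^{10\pi i n/3} = e^{-2\pi i n/3}$, this becomes
\begin{equation}
    \frac{1}{\sqrt{3}} e^{-2\pi i (n + k)/3 + \pi i/6} + \frac{2}{\sqrt{3}} e^{-2\pi i (n + k - nN)/3 + \pi i/6}.
\end{equation}
Taking real parts and using $\cos(-x) = \cos x$ gives $\cos(2\pi(n+k)/3 - \pi/6)$ and $\cos(2\pi(n + k - nN)/3 - \pi/6)$ with the coefficients $2/\sqrt{3}$ and $4/\sqrt{3}$ after doubling. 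Substituting these values yields the stated formula with $\abs{n} = n$.

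For $n < 0$, complex conjugation identifies $\mathscr{H}_{n}^{\psi^{\prime}}$ with $\bigoplus_{k} \overline{\mathscr{H}_{-n, k}^{\psi^{\prime}}}$, since $\psi^{\prime}$ is real and so conjugation preserves invariance. The dimensions are therefore those for $-n = \abs{n}$, and $(-1)^{n+k} = (-1)^{\abs{n}+k}$ matches the statement.

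The main obstacle is only bookkeeping of the phase factors modulo $2\pi$, namely checking that $e^{4\pi i n/3}$ combines with the earlier phases to reverse the sign of $n$ in each exponent. No new Gauss-sum evaluation is needed, because every character is inherited from the $\Gamma_{c}$ case.
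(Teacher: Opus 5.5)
Your proposal is correct and follows the paper's proof essentially step for step. You write $\psi' = (0, 2c/3)\psi$, deduce $\chi_{\rho'}((\psi')^{j}) = e^{4\pi i n j/3}\chi_{\rho}(\psi^{j})$ on $\mathscr{H}_{n}$, substitute the $\Gamma_{c}$ characters into the Schur orthogonality formula, and handle $n = 0$ and $n < 0$ as the paper does; your phase bookkeeping (e.g.\ $e^{10\pi i n/3} = e^{-2\pi i n/3}$) is also correct.
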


\begin{proof}
    Any function on $M^{\prime}$ can be identified with a $\psi^{\prime}$-invariant function on $M_{\omega, c}$.
    Denote by $\mathscr{H}_{n}^{\psi^{\prime}}$ the space of $\psi^{\prime}$-invariant functions in $\mathscr{H}_{n}$.
    Then $L^{2}(M^{\prime}) = \bigoplus_{n \in\bbZ} \mathscr{H}_{n}^{\psi^{\prime}}$.
    The proof for $n = 0$ is the same as that in the case of $M$.
    
    Assume that $n > 0$.
    Set $\mathscr{H}_{n, k}^{\psi^{\prime}} \coloneqq \mathscr{H}_{n, k} \cap \mathscr{H}_{n}^{\psi^{\prime}}$, which is the space of $\psi^{\prime}$-invariant functions in $\mathscr{H}_{n, k}$.
    Note that $\mathscr{H}_{n}^{\psi^{\prime}} = \bigoplus_{k \in \bbZ_{\geq 0}} \mathscr{H}_{n, k}^{\psi^{\prime}}$ and
    $\mathscr{H}_{n, k}^{\psi^{\prime}}$ is the joint eigenspace corresponding to the eigenvalue $\sigma_{n, k}$ of $\Delta_{b}$ and $i^{- 1} T$ on $M^{\prime}$.
    To compute the dimension of $\mathscr{H}_{n, k}^{\psi^{\prime}}$,
    we consider the representation $\rho^{\prime}$ of $\Gamma_{c}^{\prime} / \Lambda_{\omega, c} \cong \langle \psi^{\prime} \rangle \cong \bbZ / 6 \bbZ$ on $\mathscr{H}_{n, k}$ defined by the pullback.
    The Schur orthogonality relation implies that
    \begin{equation}
        \dim \mathscr{H}_{n, k}^{\psi^{\prime}} = \frac{1}{6} \sum_{l = 0}^{5} \chi_{\rho^{\prime}}((\psi^{\prime})^{l}) = \frac{1}{6} (n N + 2 \Real \chi_{\rho^{\prime}}(\psi^{\prime}) + 2 \Real \chi_{\rho^{\prime}}((\psi^{\prime})^{2}) + \chi_{\rho^{\prime}}((\psi^{\prime})^{3})).
    \end{equation}
    Since $\psi^{\prime} = (0, 2 c / 3) \psi$,
    \begin{equation}
        ((\psi^{\prime})^{\ast} f)(z, t) = f((0, 2 c / 3) \psi(z, t)) = e^{4 \pi i n / 3} (\psi^{\ast} f)(z, t)
    \end{equation}
    for any $f \in \mathscr{H}_{n}$.
    Thus we have
    \begin{align}
        \chi_{\rho^{\prime}}(\psi^{\prime}) &= e^{4 \pi i n / 3} \chi_{\rho}(\psi) = e^{- \pi i (n + k - 1) / 3}, \\
        \chi_{\rho^{\prime}}((\psi^{\prime})^{2}) &= e^{8 \pi i n / 3} \chi_{\rho}(\psi^{2}) = \frac{1}{\sqrt{3}} e^{- 2 \pi i (n + k) / 3 + \pi i / 6} + \frac{2}{\sqrt{3}} e^{- 2 \pi i (n + k - n N) / 3 + \pi i / 6},
    \end{align}
    and $\chi_{\rho^{\prime}}((\psi^{\prime})^{3}) = \chi_{\rho}(\psi^{3}) = 2 (- 1)^{n + k}$.
    These formulas give that 
    \begin{align}
        \dim \mathscr{H}_{n, k}^{\psi^{\prime}}
        &= \frac{1}{6} \Bigg( n N + 2 (- 1)^{n + k} + 2 \cos \frac{\pi (n + k - 1)}{3} + \frac{2}{\sqrt{3}} \cos \left( \frac{2 \pi (n + k)}{3} - \frac{\pi}{6} \right) \\
        &\qquad \qquad+ \frac{4}{\sqrt{3}} \cos \left( \frac{2 \pi (n + k - n N)}{3} - \frac{\pi}{6} \right) \Bigg).
    \end{align}

    In the case $n < 0$, we have the orthogonal decomposition $\mathscr{H}_{n}^{\psi^{\prime}} = \bigoplus_{k \geq 0} \overline{\mathscr{H}_{- n, k}^{\psi^{\prime}}}$ and $\overline{\mathscr{H}_{- n, k}^{\psi^{\prime}}}$ is the joint eigenspace corresponding to the eigenvalue $\sigma_{n, k}$ of $\Delta_{b}$ and $i^{- 1} T$ on $M^{\prime}$.
    In particular,
    \begin{align}
        \dim \overline{\mathscr{H}_{- n, k}^{\psi^{\prime}}}
        &= \dim \mathscr{H}_{- n, k}^{\psi^{\prime}} \\
        &= \frac{1}{6} \Bigg( \abs{n} N + 2 (- 1)^{n + k} + 2 \cos \frac{\pi (\abs{n} + k - 1)}{3} + \frac{2}{\sqrt{3}} \cos \left( \frac{2 \pi (\abs{n} + k)}{3} - \frac{\pi}{6} \right) \\
        &\qquad \qquad+ \frac{4}{\sqrt{3}} \cos \left( \frac{2 \pi (\abs{n} + k - \abs{n} N)}{3} - \frac{\pi}{6} \right) \Bigg),
    \end{align}
    which completes the proof.
\end{proof}

\begin{rem}
\label{rem:non conjugate of order 6}
    Consider the case $N \equiv 0$.
    Suppose to the contrary that $\Gamma_{c}$ is conjugate to $\Gamma_{c}^{\prime}$ in $\bbH \rtimes \bbC^{\ast}$.
    Then $(M, T^{1, 0} M, r \theta_{\bbH})$ is isomorphic to $(M^{\prime}, T^{1, 0} M^{\prime}, r^{\prime} \theta_{\bbH})$ for some $r^{\prime} \in \bbR_{> 0}$ as a pseudo-Hermitian manifold.
    Considering the spectrum of $i^{- 1} T$ yields that $r = r^{\prime}$,
    and so $m_{n, k}$ must be equal to $m_{n, k}^{\prime}$ for every $n$ and $k$.
    However, we can see that
    \begin{equation}
        m_{1, 0} = \frac{N}{6} - 1 \neq \frac{N}{6} = m_{1, 0}^{\prime},
    \end{equation}
    which is a contradiction.
    Therefore $\Gamma_{c}$ is not conjugate to $\Gamma_{c}^{\prime}$ in $\bbH \rtimes \bbC^{\ast}$.
\end{rem}

\section*{Acknowledgments}

The authors are grateful to Professor Yoshinobu Kamishima for helpful suggestions regarding the proof of Lemma~\ref{lem:rotation}.
Part of this work was carried out while the first author was visiting University of Tsukuba. The first author would like to thank University of Tsukuba for its hospitality. The first author is also grateful to Professor Yoshihiko~Matsumoto for valuable discussions.

\bibliographystyle{plain}
\bibliography{myrefs}

@article {Ponge2008HC,
    AUTHOR = {Ponge, Rapha\"el S.},
     TITLE = {Heisenberg calculus and spectral theory of hypoelliptic
              operators on {H}eisenberg manifolds},
   JOURNAL = {Mem. Amer. Math. Soc.},
  FJOURNAL = {Memoirs of the American Mathematical Society},
    VOLUME = {194},
      YEAR = {2008},
    NUMBER = {906},
     PAGES = {viii+ 134},
      ISSN = {0065-9266,1947-6221},
   MRCLASS = {58J40 (35H10)},
  MRNUMBER = {2417549},
MRREVIEWER = {Fabio\ Nicola},
       DOI = {10.1090/memo/0906},
       URL = {https://doi-org.tsukuba.idm.oclc.org/10.1090/memo/0906},
}

@misc{Suzuki2024preprint,
Author = {Yoshiaki Suzuki},
Title = {The {F}olland-{S}tein spectrum of some {H}eisenberg {B}ieberbach manifolds},
Year = {2024},
Eprint = {arXiv:2402.15093},
Note = {\texttt{arXiv:2402.15093}, to appear in Kyushu J. Math.},
}

@article {Jerison-Lee1987Yamabe,
    AUTHOR = {Jerison, David and Lee, John M.},
     TITLE = {The {Y}amabe problem on {CR} manifolds},
   JOURNAL = {J. Differential Geom.},
  FJOURNAL = {Journal of Differential Geometry},
    VOLUME = {25},
      YEAR = {1987},
    NUMBER = {2},
     PAGES = {167--197},
      ISSN = {0022-040X,1945-743X},
   MRCLASS = {58G30 (53C15)},
  MRNUMBER = {880182},
MRREVIEWER = {Dennis\ M.\ DeTurck},
       URL = {http://projecteuclid.org.tsukuba.idm.oclc.org/euclid.jdg/1214440849},
}

@book {Folland1989Harmonic,
    AUTHOR = {Folland, Gerald B.},
     TITLE = {Harmonic analysis in phase space},
    SERIES = {Annals of Mathematics Studies},
    VOLUME = {122},
 PUBLISHER = {Princeton University Press, Princeton, NJ},
      YEAR = {1989},
     PAGES = {x+277},
      ISBN = {0-691-08527-7; 0-691-08528-5},
   MRCLASS = {22E30 (43A80 58G15 81S30)},
  MRNUMBER = {983366},
       DOI = {10.1515/9781400882427},
       URL = {https://doi-org.tsukuba.idm.oclc.org/10.1515/9781400882427},
}

@book {Berndt-Evans-Williams1998Gauss,
    AUTHOR = {Berndt, Bruce C. and Evans, Ronald J. and Williams, Kenneth
              S.},
     TITLE = {Gauss and {J}acobi sums},
    SERIES = {Canadian Mathematical Society Series of Monographs and
              Advanced Texts},
      NOTE = {A Wiley-Interscience Publication},
 PUBLISHER = {John Wiley \& Sons, Inc., New York},
      YEAR = {1998},
     PAGES = {xii+583},
      ISBN = {0-471-12807-4},
   MRCLASS = {11L05 (11A15 11L10 11T22 11T24)},
  MRNUMBER = {1625181},
MRREVIEWER = {Charles\ Helou},
}

@book {Lang1994ANT,
    AUTHOR = {Lang, Serge},
     TITLE = {Algebraic number theory},
    SERIES = {Graduate Texts in Mathematics},
    VOLUME = {110},
   EDITION = {Second},
 PUBLISHER = {Springer-Verlag, New York},
      YEAR = {1994},
     PAGES = {xiv+357},
      ISBN = {0-387-94225-4},
   MRCLASS = {11Rxx (11-01 11-02)},
  MRNUMBER = {1282723},
MRREVIEWER = {M.\ Ram\ Murty},
       DOI = {10.1007/978-1-4612-0853-2},
       URL = {https://doi-org.tsukuba.idm.oclc.org/10.1007/978-1-4612-0853-2},
}

@article {burns-shnider1976spherical,
    AUTHOR = {Burns, Jr., D. and Shnider, S.},
     TITLE = {Spherical hypersurfaces in complex manifolds},
   JOURNAL = {Invent. Math.},
  FJOURNAL = {Inventiones Mathematicae},
    VOLUME = {33},
      YEAR = {1976},
    NUMBER = {3},
     PAGES = {223--246},
      ISSN = {0020-9910,1432-1297},
   MRCLASS = {32M10 (32F99 32K99)},
  MRNUMBER = {419857},
MRREVIEWER = {Shoshichi\ Kobayashi},
       DOI = {10.1007/BF01404204},
       URL = {https://doi-org.tsukuba.idm.oclc.org/10.1007/BF01404204},
}

@article {folland2004compact,
    AUTHOR = {Folland, G. B.},
     TITLE = {Compact {H}eisenberg manifolds as {CR} manifolds},
   JOURNAL = {J. Geom. Anal.},
  FJOURNAL = {The Journal of Geometric Analysis},
    VOLUME = {14},
      YEAR = {2004},
    NUMBER = {3},
     PAGES = {521--532},
      ISSN = {1050-6926,1559-002X},
   MRCLASS = {32V15 (14K99 32V20)},
  MRNUMBER = {2077163},
MRREVIEWER = {Thomas\ Garrity},
       DOI = {10.1007/BF02922102},
       URL = {https://doi.org/10.1007/BF02922102},
}

@book {charlap1986bieberbach,
    AUTHOR = {Charlap, Leonard S.},
     TITLE = {Bieberbach groups and flat manifolds},
    SERIES = {Universitext},
 PUBLISHER = {Springer-Verlag, New York},
      YEAR = {1986},
     PAGES = {xiv+242},
      ISBN = {0-387-96395-2},
   MRCLASS = {57S30 (22E40 53C30)},
  MRNUMBER = {862114},
MRREVIEWER = {Kyung\ Bai\ Lee},
       DOI = {10.1007/978-1-4613-8687-2},
       URL = {https://doi.org/10.1007/978-1-4613-8687-2},
}

\end{document}